\definecolor{RoyalBlue}{RGB}{35,32, 57}
\definecolor{lightblue}{RGB}{132,131,143}
\tikzstyle{chapterbox} = [fill=black!25!white, thick,
\tikzstyle{bluebox} = [draw=blue!50!black, fill=blue!10, very thick,
\tikzstyle{rahmen blau} = [draw=black, fill=blue!20, very thick,
\tikzstyle{graybox} = [draw=black, fill=gray!10, thick,
\tikzstyle{box} = [draw=black,  thick,
\theoremstyle{definition}
\newtheorem{definition}{Definition}[section]
\newtheorem{theorem}[definition]{Theorem}
\newtheorem{corollary}[definition]{Corollary}
\newtheorem{lemma}[definition]{Lemma}
\theoremstyle{definition}
\newtheorem{remark}[definition]{Remark}
\theoremstyle{definition}
\newcommand{\res}[1]{\raisebox{0.0ex}{\scalebox{1}{$\underset{#1}{\mathrm{Res}}\;$}}}
\renewcommand{\Im}{\mathrm{Im}} 
\renewcommand{\Re}{\mathrm{Re}} 
\newcommand{\z}{\boldsymbol{z}}
\newcommand{\x}{\boldsymbol{x}}
\newcommand{\n}{\boldsymbol{n}}
\newcommand{\iin}{\mathrm{in}}
\newcommand{\aalpha}{\boldsymbol{\alpha}}
\renewcommand{\S}{\mathcal{S}}	
\newcommand{\LHP}{\mathrm{LHP}}
\newcommand{\UHP}{\mathrm{UHP}}
\newcommand\numberthis{\addtocounter{equation}{1}\tag{\theequation}} 
\numberwithin{equation}{section} 
\renewcommand{\a}{\mathfrak{a}}
\title{Diffraction by a Right-Angled No-Contrast Penetrable Wedge: Analytical Continuation of Spectral Functions}  
\date{} 
\author{Valentin D. Kunz\thanks{valentin.kunz@manchester.ac.uk} \ and  \ Raphael C. Assier\thanks{raphael.assier@manchester.ac.uk}  \\ \footnotesize The University of Manchester, Department of Mathematics,  Oxford Road, Manchester, M13 9PL, UK}
\newcommand{\red}{}
\newcommand{\RED}{}
\newcommand{\BLUE}{}
\newcommand{\blue}{}
\newcommand{\Blue}{}
\newcommand{\magenta}{}
\begin{document}


	\maketitle

	\begin{abstract} 
		We study the problem of diffraction by a right-angled no-contrast penetrable wedge by means of a two-complex-variable Wiener-Hopf approach. Specifically, the analyticity properties of the unknown (spectral) functions of the two-complex-variable Wiener-Hopf equation are studied. We show that these spectral functions can be analytically continued onto a two-complex dimensional manifold, and unveil their singularities in $\mathbb{C}^2$. To do so, integral representation formulae for the spectral functions are given and thoroughly used. It is shown that the novel concept of additive crossing holds for the penetrable wedge diffraction problem and that we can reformulate the physical diffraction problem as a functional problem using this concept.
	\end{abstract}	
	
	\section{Introduction} 
	For over a century, the canonical problem of diffraction by a penetrable wedge has attracted a great deal of attention in search for a clear analytical solution, which remains an open and challenging problem.
	Beyond their importance to mathematical physics as one of the building blocks of the geometrical theory of diffraction \cite{Keller1962}, wedge diffraction problems also have applications to climate change modelling, as they are related to the scattering of light waves by atmospheric particles such as ice crystals, which is one of the big uncertainties when calculating the Earth's radiation budget (see \cite{Baran2013,GrothEtAl.2015,GrothEtAl.2018} and \cite{SmithEtAl.2015}).
	 
		\red{An important parameter when studying the diffraction by a penetrable wedge is the contrast parameter $\lambda$ which is defined as the ratio of either the electric permittivities $\varepsilon_{1,2}$, magnetic permeabilities $\mu_{1,2}$, or densities $\rho_{1,2}$ corresponding to the material inside and outside the wedge, respectively, depending on the physical context, cf.\! Section \ref{sec:ProblemFormulation}. The case of $\lambda \ll 1$ \BLUE{(high contrast)} is, for instance, studied in \cite{Lyalinov1999} and \cite{Nethercote2020HighContrastApprox}. In \cite{Lyalinov1999}, Lyalinov adapts the Sommerfeld-Malyuzhinets technique to penetrable scatterers and concludes with a far-field approximation taking the geometrical optics components and the diffracted cylindrical waves into account whilst neglecting the lateral waves' contribution. More recently, in \cite{Nethercote2020HighContrastApprox} Nethercote, Assier, and Abrahams provide a method to accurately and rapidly compute the far-field for high-contrast penetrable wedge diffraction taking the lateral waves' contribution into account, by using a combination of the Wiener-Hopf and  Sommerfeld-Malyuzhinets technique. The case of general contrast parameter $\lambda$ is for example considered by \blue{Daniele} and Lombardi  in \cite{DanieleLombardi2011}, which is based on adapting the classical, one-complex-variable Wiener-Hopf technique to penetrable scatterers, and  Salem, Kamel, and Osipov in \cite{SalemEtAl2006}, which is based on an adaptation of the Kontorovich-Lebedev transform to penetrable scatterers. \BLUE{When the wedge has very small opening angle, Budaev and Bogy obtain a convergent Neumann series by using the Sommerfeld-Malyuzhinets technique \cite{Budaev1999}.} \blue{All of these papers offer different ways to numerically compute the total wave-field.} \blue{Another, more theoretically oriented perspective on penetrable wedge diffraction when the wedge is right-angled is provided by Meister, Penzel, Speck, and Teixeira in \cite{MeisterEtAl1994}, which follows an operator-theoretic approach and takes different interface conditions on the two faces of the wedge into account.}}

	\red{The present article studies the case of a no-contrast penetrable wedge. That is, we set $\lambda =1$. Moreover, we assume that the wedge is right-angled. Previous work on this special case includes that of Radlow \cite{Radlow1964PW}, Kraut and Lehmann \cite{KrautLehmann1969}, and Rawlins \cite{Rawlins1977}.  \cite{Radlow1964PW} and  \cite{KrautLehmann1969} are based on a two-complex-variable Wiener-Hopf approach, whereas \cite{Rawlins1977}'s approach is based on Green's functions. 
	\blue{In \cite{Radlow1964PW}} Radlow gives a closed-form solution but it was deemed erroneous by Kraut and Lehmann \cite{KrautLehmann1969} as it led to the wrong corner asymptotics. \BLUE{\Blue{Kraut and Lehmann assume that the wavenumbers inside and outside of the wedge are of similar size, and in \cite{Rawlins1977}, Rawlins extends their work by generalising \cite{KrautLehmann1969}'s scheme to arbitrary opening angles. A description of the diffraction coefficient is given in the right-angled case, in addition to the near-field description provided in \cite{KrautLehmann1969}}. \Blue{Moreover, in \cite{Rawlins1999}, Rawlins obtains the diffraction coefficient for penetrable wedges with arbitrary angles}. Both, \cite{Rawlins1977} and \cite{Rawlins1999} require the wavenumbers inside and outside of the wedge to be of similar size}. \Blue{Another approach on the right-angled no-contrast wedge, that is based on physical optics approximations (also referred to as Kirchhoff approximation), is presented in \cite{GennarelliRiccio2011} and \cite{GennarelliRiccio2012}. These papers modify the ansatz posed in \cite{BurgeEtAl1999}, which extends classical physical optics \cite{Ufimtsev2014} from perfect to penetrable scatterers.} 
	
	Recently, a correction term missing in Radlow's work was given by the authors in \cite{Kunz2021diffraction}. This correction term includes an unknown spectral function and thus, the no-contrast right-angled penetrable wedge diffraction problem remains unsolved.} \blue{The present work is part of an ongoing effort to apply multidimensional complex analysis to diffraction theory  \cite{Kunz2021diffraction,AssierShanin2019,AssierShaninKorolkov2022,Shabat1991,AssierShanin2021VertexGreensFunctions,AssierShanin2021AnalyticalCont,AssierAbrahams2021}. Another approach, also exploiting interesting ideas of multidimensional analysis in the context of wedge diffraction, is given in the monograph \cite{Komech2019} that also contains an excellent review of wedge diffraction problems.} 
	
	We first reformulate the diffraction problem as a two-complex-variable functional problem in the spirit of \cite{AssierShanin2019} and prove that this functional formulation is indeed equivalent to the physical problem. Therefore, solving the functional problem \blue{would} directly \blue{solve} the diffraction problem at hand, which immediately motivates further study of the former. 
	Specifically, we will endeavour to study the analytical continuation of the unknown (spectral) functions of the two-complex-variable Wiener-Hopf equation \eqref{eq.WienerHopf}. Indeed, not only is \red{\blue{the}} knowledge of  the spectral functions' domains of analyticity crucial for completing the classical (one-complex-variable) Wiener-Hopf technique (cf.\!\! \cite{Noble1958}), but by the recent work of Assier, Shanin, and Korolkov \cite{AssierShaninKorolkov2022} we know that knowledge of the spectral functions' singularities allows for computation of the physical fields' far-field asymptotics. \red{Specifically, to obtain closed-form far-field asymptotics of the physical fields, which are represented as inverse double Fourier integrals as given in \eqref{eq.phiscDef} and \eqref{eq.psiDef}, we need to answer the following questions:
			\BLUE{\begin{enumerate}
				\item What are the spectral functions' singularities in $\mathbb{C}^2$?
				\item How can we represent the spectral functions in the vicinity of these singularities?
			\end{enumerate}}
	Addressing these questions, and thereby building the framework that allows us to make further progress, is the main endeavour of the present article.}
	Note that, at this point, it is not clear how the two-complex-variable Wiener-Hopf equation can be solved and generalising the Wiener-Hopf technique to two or more dimensions remains a challenging practical and theoretical task. We refer to the introduction of \cite{AssierShanin2019} for a comprehensive overview of the Wiener-Hopf technique and the difficulties with its generalisation to two-complex-variables.

	The content of the present paper is organised as follows. After formulating the physical problem in Section \ref{subsec:PhysicalProblemFormulation} and rewriting it as a two-complex-variable functional problem involving the unknown spectral functions `$\Psi_{++}$' and `$\Phi_{3/4}$' in Section \ref{subsec:FunctionalProblemFormulation}, the equivalence of these two formulations is proved in Theorem \ref{thm:BackToPhysical}.	Thereafter, in Section \ref{sec:AnalyticalContinuation}, we will study the analytical continuation of the spectral functions in the spirit of \cite{AssierShanin2019}. Using integral representation formulae given in Section \ref{subsec:FormulaeForAnalyticalContinuation}, we unveil the spectral functions' singularities in $\mathbb{C}^2$, \Blue{as well as their local behaviour near those singularities}, in Sections \ref{subsec:FirstStep} and \ref{subsec:SecondStep}. Throughout Sections \ref{sec:ProblemFormulation}--\ref{subsec:SecondStep} we assume positive imaginary part of the wavenumbers $k_1$ and $k_2$, and in Section \ref{subsec:Singularities}, we discuss the spectral functions' singularities on $\mathbb{R}^2$ in the limit $\Im(k_{1,2}) \to 0$. 	Finally, in Section \ref{sec:AdditiveCrossing}, we show that the novel additive crossing property (introduced in \cite{AssierShanin2019}) holds for the spectral function $\Phi_{3/4}$ \red{\BLUE{at intersections of its branch sets. \Blue{This property is critical to obtaining the correct far-field asymptotics (see \cite{AssierShaninKorolkov2022}), and} will lead to the final spectral reformulation of the physical problem, as given in Section \ref{sec:FunctionalProblemReformulated}.}}

	\section{The functional problem for the penetrable wedge}\label{sec:ProblemFormulation}

	\subsection{Formulation of the physical problem}\label{subsec:PhysicalProblemFormulation}
	We are considering the problem of diffraction of a plane wave $\phi_{\iin}$ incident on an infinite, right-angled, penetrable wedge (PW) given by
	\[
	\text{PW} = \{(x_1,x_2) \in \mathbb{R}^2| \ x_1 \geq 0, x_2 \geq 0 \},
	\]
	see Figure \ref{fig:transparentWedge} (left).
	
	\begin{figure}[!h]
		\centering
		\includegraphics[width=\textwidth]{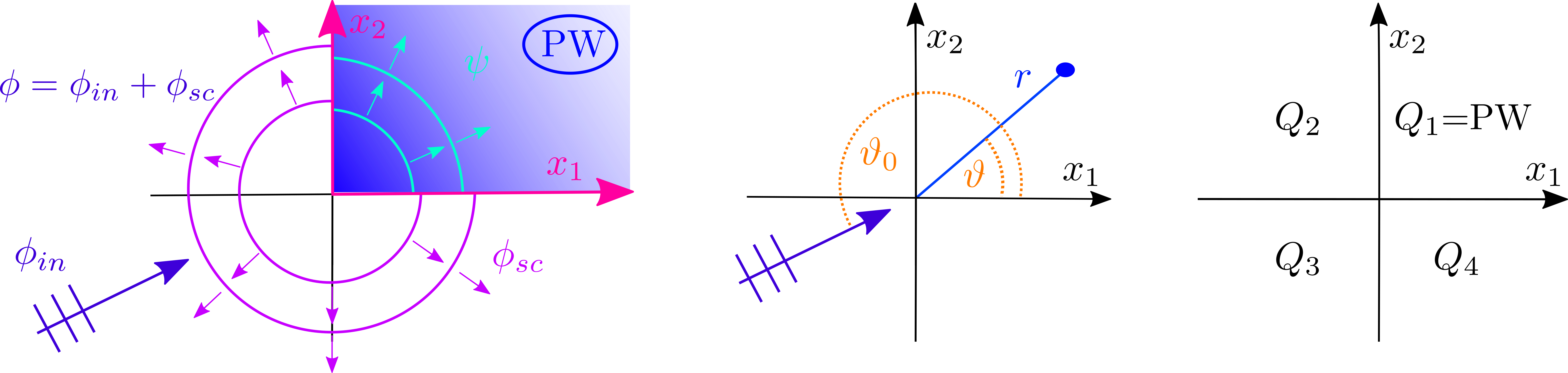}
		\caption{\small Left: Illustration of the problem described by equations \eqref{eq:1.1}--\eqref{eq:1.6}. The scatterer i.e the penetrable wedge is shown in blue with edges in magenta. Middle: Polar coordinate system and incident angle $\vartheta_0$ of $\phi_{\iin}$. Right: Sectors $Q_1,Q_2,Q_2, \text{ and } Q_4$ defined in Section \ref{subsec:FunctionalProblemFormulation}} 
		\label{fig:transparentWedge}
	\end{figure}
	
	We  assume transparency of the wedge and thus expect a scattered field $\phi_{\mathrm{sc}}$ in $\mathbb{R}^2 \setminus \text{PW}$ and a transmitted field $\psi$ in $\text{PW}$. Moreover, we assume  time-harmonicity with the $e^{-i\omega t}$ convention. Therefore, the wave-fields' dynamics are described by two Helmholtz equations, and the incident wave (only supported within $\mathbb{R}^2 \setminus \text{PW}$) is given by
	\[
	\phi_{\iin}(\x) = e^{i \boldsymbol{k}_1 \cdot \x}
	\]	
	where $\boldsymbol{k}_1 \in \mathbb{R}^2$ is the incident wave vector and $\x = (x_1,x_2) \in \mathbb{R}^2$ (this notation will be used throughout the article). Additionally,
	we are describing a \emph{no-contrast penetrable wedge}, meaning that the \emph{contrast parameter} $\lambda$ satisfies 
	\[
	\lambda =1.
	\]
	In the electromagnetic setting, this assumption would correspond to either $\mu_1=\mu_2$ (electric polarisation) or $\epsilon_1=\epsilon_2$ (magnetic polarisation) where $\mu_1$ and $\mu_2$  (resp. $\epsilon_1$ and $\epsilon_2$) are the 	magnetic permeability of the media in $\mathbb{R}^2 \setminus \text{PW}$ and $\text{PW}$  (resp. the electric permittivities of the media in $\mathbb{R}^2 \setminus \text{PW}$ and $\text{PW}$). In the acoustic setting, this assumption corresponds to $\rho_1 = \rho_2$, where $\rho_1$ and $\rho_2$ are the densities of the media (at rest) in $\mathbb{R}^2 \setminus \text{PW}$ and $\text{PW}$, respectively.
	
	Let $k_1=|\boldsymbol{k}_1| =c_1/\omega$ and $k_2=c_2/\omega$ denote the \Blue{wavenumbers} inside and outside  PW, respectively, where $c_1$ and $c_2$ are the wave speeds relative to the media in $\mathbb{R}^2 \setminus \text{PW}$ and $\text{PW}$, respectively. In the electromagnetic setting, $c_j, j=1,2$ corresponds to the speed of light whereas in the acoustic setting, $c_j$ corresponds to the speed of sound. Although $\lambda=1$, the \Blue{wavenumbers}  are different ($k_1 \neq k_2$) since the other media properties defining the speeds of light (in the electromagnetic setting) or sound  (in the acoustic setting) are different.  We refer to \cite{Kunz2021diffraction}, Section 2.1 for a more detailed discussion of the physical context.

	Setting $\phi= \phi_{\mathrm{sc}} + \phi_{\iin}$ (the total wave-field in $\mathbb{R}^2 \setminus \text{PW}$), and letting $\n$ denote the inward pointing normal on $\partial \text{PW}$, the diffraction problem at hand is then described by the following equations.
	
	\begin{alignat}{3}
		\Delta \phi + k^2_1 \phi &=0  &&\text{ in }   \mathbb{R}^2 \setminus \text{PW}, \label{eq:1.1} 	\\
		\Delta \psi + k^2_2 \psi &=0  &&\text{ in }   \text{PW}, \label{eq:1.2} 		\end{alignat}	
	\begin{alignat}{3}			
		\phi&= \psi  &&\text{ on } \partial \text{PW}, \label{eq:1.3} \\
		\partial_{\n} \phi &= \partial_{\n} \psi  &&\text{ on } \partial \text{PW}. \label{eq:1.6}	
	\end{alignat}	
	
	In the electromagnetic setting, $\phi$ and $\psi$ correspond either to the electric
	or magnetic field (depending on the polarization of the incident wave, cf.\!\! \cite{Radlow1964PW,KrautLehmann1969})  in $\mathbb{R}^2 \setminus \text{PW}$ and $\text{PW}$, respectively, whereas in the acoustic setting, $\phi$ and $\psi$ represent the total pressure in  $\mathbb{R}^2 \setminus \text{PW}$ and $\text{PW}$, respectively.

	Equations \eqref{eq:1.1} and \eqref{eq:1.2} are the problem's governing equations, describing the fields' dynamics, whereas the boundary conditions \eqref{eq:1.3}--\eqref{eq:1.6} impose continuity of the fields and their normal derivatives at the wedge's boundary. 
	Introducing polar coordinates $(r,\vartheta)$ (cf.\!\!  Figure \ref{fig:transparentWedge}, middle), we  rewrite the incident wave vector as $\boldsymbol{k}_1 = -k_1(\cos(\vartheta_0),\sin(\vartheta_0))$ where $\vartheta_0$ is the incident angle. The incident wave can then be rewritten as
	\begin{align}
		\phi_{\iin} = e^{-i(\mathfrak{a}_1x_1 + \mathfrak{a}_2x_2)} \label{eq:IncidentRewritten}
	\end{align}
	with \begin{align}
		\mathfrak{a}_1 = k_1 \cos(\vartheta_0) \text{ and } \mathfrak{a}_2 = k_1 \sin(\vartheta_0). \label{eq.a_1,2Definition}
	\end{align} Henceforth, as usual when working in a Wiener-Hopf setting, we assume that the wave numbers have small positive imaginary part $\Im(k_1) = \Im(k_2) >0$ which, since we assumed time harmonicity with the $e^{-i\omega t}$ convention, corresponds to the damping of waves. Note that the imaginary parts of $k_1$ and $k_2$ may be chosen independently of each other, but this does not matter in the present context. In Section \ref{subsec:Singularities}, we will investigate the limit $\Im(k_{1,2}) \to 0$. Moreover, for technical reasons, we have to restrict the incident angle $\vartheta_0 \in (\pi, 3 \pi/2)$, which implies  \[\text{Im}(\mathfrak{a}_{1,2}) \leq - \delta < 0\] for 
			\begin{align}
					\delta = \min\{\Im(k_1)|\cos(\vartheta_0)|, \Im(k_1)|\sin(\vartheta_0)|\}. \label{eq.deltadef}
			\end{align}
	This condition on the incident angle is rather restrictive since it says that the field cannot produce \red{secondary} reflected \red{and transmitted} waves as the incident wave is coming from the $Q_3$-region (see Figure \ref{fig:transparentWedge}, middle and right). However, we \red{will} work around this restriction \blue{in Section \ref{subsec:Singularities}} \red{as long as $\phi_{\iin}$ is incident from within $Q_2 \cup Q_3 \cup Q_4$. \blue{For an incident wave coming} from within $Q_1 = \text{PW}$, the following analysis has to be repeated separately.}
	
	For the problem to be well posed, we also require the fields to satisfy the Sommerfeld radiation condition, meaning that the wave-field should be outgoing in the far-field, and edge conditions called `Meixner conditions', ensuring finiteness of the wave-field's energy near the tip. The radiation condition is \red{\magenta{imposed via the limiting absorption principle on the scattered and transmitted fields: For $\Im(k_{1,2}) >0$, $\phi_{\mathrm{sc}}$ and $\psi$ decay exponentially.}} The edge conditions are given by
	
	\begin{align}
		&	\phi(r, \vartheta) = B +\left(A_1\sin(\vartheta) + B_1\cos(\vartheta)\right)r + \mathcal{O}(r^2) \ \text{as} \ r \to 0, \label{eq.2.56} \\	
		&	\psi(r, \vartheta) = B + \left(A_1\sin(\vartheta) + B_1 \cos(\vartheta)\right)r + \mathcal{O}(r^2) \ \text{as} \ r \to 0 \label{eq.2.57}.
	\end{align}
	We refer to \cite{Kunz2021diffraction} Section 2.1 for a more detailed discussion. Note that \eqref{eq.2.56} and \eqref{eq.2.57} are only valid when $\lambda =1$, and we refer to \cite{Nethercote2020HighContrastApprox} for the general case. Finally, we note that specifying the behaviour of the fields near the wedge's tip and at infinity is required to guarantee unique solvability of the problem described by equations \eqref{eq:1.1}--\eqref{eq:1.6}, see \cite{BabichMokeeva2008}.
	
	\subsection{Formulation as functional problem}\label{subsec:FunctionalProblemFormulation}
	
	Let  $Q_n, \ n=1,2,3,4$ denote the $n$th quadrant of the $(x_1,x_2)$ plane given by
	\begin{align*}
		\text{PW} =	& Q_1 = \{ \x \in \mathbb{R}^2| x_1\geq0, \ x_2\geq0 \}, \	Q_2 = \{ \x \in \mathbb{R}^2| x_1\leq0, \ x_2\geq0 \}, \\
		& 	Q_3 = \{ \x \in \mathbb{R}^2| x_1\leq0, \ x_2\leq0 \}, \ Q_4 = \{ \x \in \mathbb{R}^2| x_1\geq0, \ x_2\leq0 \},
	\end{align*} 
	see Figure \ref{fig:transparentWedge} (right). The  one-quarter Fourier transform of a function $u$ is given by 
	\begin{align}
		U_{1/4}(\boldsymbol{\alpha})=\mathcal{F}_{1/4}[u](\boldsymbol{\alpha}) = \int\hspace{-.2cm}\int_{Q_1} u(\x) e^{i \boldsymbol{\alpha} \cdot \x}
		d\x, \label{def. 1/4FT}
	\end{align}
	and a function's  three-quarter Fourier transform is given by
	\begin{align}
		U_{3/4}(\boldsymbol{\alpha}) = \mathcal{F}_{3/4}[u](\boldsymbol{\alpha}) 
		= \int\hspace{-.2cm}\int_{\cup_{i=2}^4 Q_i} u(\x)e^{i \boldsymbol{\alpha} \cdot \x}
		d\x. \label{def. 3/4FT}
	\end{align}	
	
	Here, we have $\boldsymbol{\alpha} = (\alpha_1,\alpha_2) \in \mathbb{C}^2$ and we write $d\x$ for $dx_1dx_2$. Analysis of where in $\mathbb{C}^2$ the variable $\aalpha$ is permitted to go will be this article's main endeavour. 
	Applying $\mathcal{F}_{1/4}$ to \eqref{eq:1.1} and $\mathcal{F}_{3/4}$ to \eqref{eq:1.2}, using the boundary conditions \eqref{eq:1.3}--\eqref{eq:1.6} and setting
	\begin{alignat*}{3}
		& \Phi_{3/4}(\boldsymbol{\alpha})  =\mathcal{F}_{3/4}[\phi_{\mathrm{sc}}], \   &&\Psi_{++}(\boldsymbol{\alpha})  =\mathcal{F}_{1/4}[\psi], \numberthis \label{eq.SpectralDef}\\ 
		& P_{++}(\boldsymbol{\alpha}) = \frac{1}{(\alpha_1 -\mathfrak{a}_1)(\alpha_2 -\mathfrak{a}_2)}, \quad 	&& K(\boldsymbol{\alpha}) = \frac{k^2_2 - \alpha^2_1 - \alpha^2_2}{k^2_1 - \alpha^2_1 - \alpha^2_2},  \numberthis \label{eq.PandKDefinition}
	\end{alignat*} 
	the following Wiener-Hopf equation is derived after a lengthy but straightforward calculation (see \cite{Kunz2021diffraction} Appendix A):
	\begin{align*}
		- K(\boldsymbol{\alpha})\Psi_{++}(\boldsymbol{\alpha}) = \Phi_{3/4}(\boldsymbol{\alpha}) + P_{++}(\boldsymbol{\alpha}), \label{eq.WienerHopf}
		\numberthis
	\end{align*}	
	which is valid in the product $\S \times \S$ of strips 
	\begin{align}
		\S = \{\alpha \in \mathbb{C}| \ -\varepsilon < \Im(\alpha) < \varepsilon\} 
	\end{align}			
	for 
	\begin{align*}
		& \varepsilon =  \frac{1}{2} \min\{\delta, \Im(k_2)\} \numberthis \label{eq.VarepsilonDef}.
	\end{align*} 
	Here, $\delta$ is as in \eqref{eq.deltadef} and since we chose $\Im(k_1) = \Im(k_2)$, we have, in fact, $\varepsilon = \delta/2$.

	\begin{remark}[Similarity to quarter-plane]\label{rem:SimilarityToQP}
		The Wiener-Hopf equation \eqref{eq.WienerHopf} is formally the same as the Wiener-Hopf equation for the quarter-plane diffraction problem discussed in \cite{AssierShanin2019}. Indeed, the only difference is due to the kernel $K$, which for the quarter-plane is given by $K=1/\sqrt{k^2-\alpha^2_1-\alpha^2_2}$ where $k$ is the (only) wavenumber of the quarter-plane problem (cf.\!\! \cite{Kunz2021diffraction} Remark 2.6). We will encounter this aspect throughout the remainder of the article, and, consequently, most of our formulae and results only differ from those given in \cite{AssierShanin2019} by $K$'s behaviour (its factorisation, see Section \ref{subsec:SomeUsefulFunctions}, and the factorisation's \Blue{domains of analyticity}; see Section \ref{subsec:DomainsForCont}, for instance).  \red{\BLUE{Though the two problems are similar in their spectral formulation, they are very different physically. Indeed, the quarter-plane problem is inherently three-dimensional and its far-field consist\blue{s} of a spherical wave emanating from the corner, some primary and secondary edge diffracted waves as well as a reflected plane wave (see e.g.\! \cite{AssierPeake2012FarField}), while the far-field of the two-dimensional penetrable wedge problem considered here \Blue{consists} of primary and secondary reflected and transmitted plane waves, some cylindrical waves emanating from the corner, as well as some lateral waves.}}
	\end{remark}
	
	\subsubsection{1/4-based and 3/4-based functions}\label{subsec:1/4and3/4based} 
	The two dimensional Wiener-Hopf equation \eqref{eq.WienerHopf} contains two unknown `spectral'  functions, $\Psi_{++}$ and $\Phi_{3/4}$. In the spirit of \cite{AssierShanin2019}, our aim is to convert the physical problem discussed in Section \ref{subsec:PhysicalProblemFormulation} into a formulation in 2D Fourier space, similar to the traditional Wiener-Hopf procedure. For this, the properties of the Wiener-Hopf equation's unknowns are of fundamental importance. Following \cite{AssierShanin2019}, we call these properties 1/4-basedness and 3/4-basedness.
	\begin{definition}
		A function $F(\aalpha)$ in two complex variables is called 1/4-based if there exists a function $f:Q_1 \to \mathbb{C}$ such that 
		\[
		F(\aalpha) = \mathcal{F}_{1/4}[f](\aalpha),
		\]
		and it is called 3/4-based,	 if there exists a function $f:\mathbb{R}^2 \setminus Q_1 \to \mathbb{C}$ such that 
		\[
		F(\aalpha) = \mathcal{F}_{3/4}[f](\aalpha).
		\]		
		Moreover we set for any $x_0 \in \mathbb{R}$
		\begin{align*}
			\UHP(x_0) = \{z \in \mathbb{C}| \ \Im(z) > x_0 \}, \  &   \LHP(x_0) = \{z \in \mathbb{C}| \ \Im(z) < x_0 \},
		\end{align*}
		and		$	\UHP = \UHP(0), \  \LHP = \LHP(0).$
	\end{definition}

	In \cite{Kunz2021diffraction}, it was shown that $\Psi_{++}$ is analytic in $\UHP(\RED{-2\varepsilon})\times \UHP(\RED{-2\varepsilon})$ where $\varepsilon>0$ is as in \eqref{eq.VarepsilonDef}. This is indeed a criterion for 1/4-basedness, that is if a function is analytic in $\UHP \times \UHP$, then it is 1/4-based, see \cite{AssierShanin2019}. However, although a function analytic in $\LHP \times \LHP$ is $3/4$-based, the unknown function $\Phi_{3/4}$ is, in general, not analytic in $\LHP \times \LHP$ (\Blue{see} \cite{AssierShanin2019} and Section \ref{subsec:SecondStep}) and therefore this does not seem to be a criterion useful to diffraction theory. Instead, the correct criterion for the quarter-plane involves the novel concept of additive crossing, see \cite{AssierShanin2019}, and analysing this phenomenon for the penetrable wedge diffraction problem is one of the main endeavours of the present article.

	\begin{remark} \label{rem.PsiBigger}
		Although $\Psi_{++}$ is analytic within $\UHP(-2\varepsilon) \times \UHP(-2\varepsilon)$, we shall, for simplicity, henceforth just work with $\varepsilon$ instead of $2 \varepsilon$. That is, we write that $\Psi_{++}$ is analytic within $\UHP(-\varepsilon) \times \UHP(-\varepsilon)$, bearing in mind that this a priori domain of analyticity can be slightly extended. 
	\end{remark}

	\subsubsection{Asymptotic behaviour of spectral functions}\label{thm:AsymptoticSpectral}
	Before we can reformulate the physical problem of Section \ref{subsec:PhysicalProblemFormulation} as a functional problem similar to the 1D Wiener-Hopf procedure, we require information about the asymptotic behaviour of the unknowns $\Psi_{++}$ and $\Phi_{3/4}$. This will not only be crucial to recover the Meixner conditions, but it will also be of fundamental importance for all of Sections \ref{sec:AnalyticalContinuation} and \ref{sec:AdditiveCrossing}.
	
	In \cite{Kunz2021diffraction} Appendix B it was shown that the spectral functions satisfy the following `spectral edge conditions'.
	For fixed $\alpha^{\star}_2$ (resp. fixed $\alpha^{\star}_1$) in $\UHP(-\varepsilon)$ we have 
	\begin{align}
		&	\Psi_{++}(\alpha_1,\alpha^{\star}_2) = \mathcal{O}(1/|\alpha_1|), \ \text{as} \ |\alpha_1| \to \infty \ \text{in} \ \UHP(\RED{-\varepsilon})  \label{Psi++Behaviour1} \\
		&	\Psi_{++}(\alpha^{\star}_1,\alpha_2) = \mathcal{O}(1/|\alpha_2|), \ \text{as} \ |\alpha_2| \to \infty \ \text{in} \ \UHP(\RED{-\varepsilon}) \label{Psi++Behaviour1.2}
	\end{align}
	and, if neither variable is fixed, 
	\begin{align}
		\Psi_{++}(\alpha_1, \alpha_2) = & \mathcal{O}(1/|\alpha_1||\alpha_2|) \ \text{as} \ 
		|\alpha_1| \to \infty, \ |\alpha_2| \to \infty \ \text{in} \ \UHP(\RED{-\varepsilon}).   \label{Psi++Behaviour2}
	\end{align}	
	Similarly, the function $\Phi_{3/4}$ satisfies the growth estimates \eqref{Psi++Behaviour1}--\eqref{Psi++Behaviour2} as $|\alpha_{1,2}| \to \infty$ in $\S \times \S$.

	\subsubsection{Reformulation of the physical problem}\label{subsec:Reform1st}
	
	Using the results above, we can rewrite the physical problem given by equations \eqref{eq:1.1}--\eqref{eq:1.6} as the following  functional problem. 	
	\begin{definition}\label{def:FirstFunctionalFormulation}
		Let $P_{++}$ and $K$ be as in \eqref{eq.PandKDefinition}.
		We say that two functions $\Psi_{++}$ and $\Phi_{3/4}$ in the variable $\aalpha \in \mathbb{C}^2$ satisfy the `penetrable wedge functional problem' if
		\begin{itemize}
			\item[1.] \ $-K(\aalpha)\Psi_{++}(\aalpha) = \Phi_{3/4}(\aalpha) + P_{++}(\aalpha)$ for all $\aalpha \in \S \times \S;$
			\item[2.] \ $\Psi_{++}$ is analytic in $\UHP(\RED{-\varepsilon}) \times \UHP(\RED{-\varepsilon})$;
			\item[3.] \ $\Phi_{3/4}$ is $3/4$-based;
			\item[4.] \ $\Psi_{++}$ and $\Phi_{3/4}$ satisfy the `spectral edge conditions' given in Section \ref{thm:AsymptoticSpectral}.
		\end{itemize}
	\end{definition}
	The importance of Definition \ref{def:FirstFunctionalFormulation} stems from the following theorem, which proves the equivalence of the penetrable wedge functional problem and the physical problem discussed in Section \ref{subsec:PhysicalProblemFormulation}.
	\begin{theorem}\label{thm:BackToPhysical}
		If  a pair of functions $\Psi_{++}, \Phi_{3/4}$ satisfies the conditions of Definition \ref{def:FirstFunctionalFormulation}, then the functions $\phi_{\mathrm{sc}}$ and $\psi$ given by
		\begin{align}
			\phi_{\mathrm{sc}}(\x)  & = \frac{1}{4\pi^2} \int\hspace{-.2cm}\int_{\mathbb{R}^2} \Phi_{3/4}(\aalpha) e^{-i \aalpha \cdot \x} d\aalpha, \label{eq.phiscDef} \\
			\psi(\x) & = \frac{1}{4\pi^2} \int\hspace{-.2cm}\int_{\mathbb{R}^2} \Psi_{++}(\aalpha) e^{-i \aalpha \cdot \x} d\aalpha, \label{eq.psiDef}
		\end{align}
		satisfy the penetrable wedge problem described by equations \eqref{eq:1.1}--\eqref{eq:1.6} for an incident wave given by $\phi_{\iin} = \exp(-i(\mathfrak{a}_1x_1 + \mathfrak{a}_2x_2))$. 
	\end{theorem}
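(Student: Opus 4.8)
The plan is to verify, one by one, that the functions $\phi_{\mathrm{sc}}$ and $\psi$ defined by the inverse double Fourier integrals \eqref{eq.phiscDef} and \eqref{eq.psiDef} satisfy all of the equations \eqref{eq:1.1}--\eqref{eq:1.6} together with the radiation and Meixner conditions. The underlying idea is that every property of the physical fields should be \emph{read off} from the analyticity and growth properties listed in Definition \ref{def:FirstFunctionalFormulation}, using contour deformation in each complex variable separately.

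First I would address the Helmholtz equations. Applying $\Delta + k_1^2$ under the integral sign in \eqref{eq.phiscDef} brings down the factor $k_1^2 - \alpha_1^2 - \alpha_2^2$, which is precisely the denominator of $K$; combining with the Wiener--Hopf equation (condition 1) rewrites the integrand in terms of $\Psi_{++}$ and $P_{++}$, both of which are analytic in a full polyquadrant (upper-upper for $\Psi_{++}$ by condition 2, and $P_{++}$ has its only poles at $\alpha_j = \mathfrak{a}_j$, which lie in the lower half-planes by the restriction on $\vartheta_0$). A contour shift of each $\alpha_j$-integral into the appropriate upper half-plane, justified by the decay estimates in Section \ref{thm:AsymptoticSpectral}, then shows the contribution vanishes except possibly for the $\phi_{\iin}$ term picked up at the poles of $P_{++}$ — so $\phi_{\mathrm{sc}}$ solves the homogeneous Helmholtz equation in $\mathbb{R}^2\setminus\text{PW}$ and $\phi = \phi_{\mathrm{sc}} + \phi_{\iin}$ is accounted for. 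The same manoeuvre, shifting both contours into the lower half-planes and using that $\Phi_{3/4}$ is $3/4$-based (condition 3), shows $\psi$ solves $\Delta\psi + k_2^2\psi = 0$ in $\text{PW}$, and simultaneously that $\phi_{\mathrm{sc}}$ and $\psi$ are supported in the correct quadrants (i.e. $\phi_{\mathrm{sc}}$ vanishes in $\text{PW}$ and $\psi$ vanishes outside it), which is exactly what $3/4$-basedness and $1/4$-basedness encode via the Paley--Wiener theorem.

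Next I would recover the boundary conditions \eqref{eq:1.3}--\eqref{eq:1.6}. These should follow by collapsing one of the two Fourier integrals: on, say, the face $x_2 = 0$, $x_1 > 0$, the $\alpha_2$-integral of the Wiener--Hopf equation \eqref{eq.WienerHopf} can be evaluated, and the decomposition $-K\Psi_{++} = \Phi_{3/4} + P_{++}$ translates into the statement that the jump of $\phi$ and of $\partial_{\n}\phi$ across the wedge faces matches that of $\psi$ — the continuity conditions being essentially built into how the one-quarter and three-quarter transforms split the plane. The edge (Meixner) conditions \eqref{eq.2.56}--\eqref{eq.2.57} follow from the sharp $\mathcal{O}(1/|\alpha_1||\alpha_2|)$ decay in \eqref{Psi++Behaviour2} (and its $\Phi_{3/4}$ analogue): this decay rate is exactly the one that makes the inverse transform behave like a constant plus a term linear in $r$ as $r\to 0$, by a standard Fourier-type Tauberian/stationary argument near the corner. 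The radiation condition is immediate from the limiting-absorption setup: with $\Im(k_{1,2}) > 0$ the shifted contours give exponentially decaying fields.

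The main obstacle, I expect, is the \emph{rigorous justification of the contour deformations} — i.e. controlling the integrals at infinity in $\mathbb{C}^2$ uniformly enough to push the one- and three-quarter Fourier integrals into their respective half-planes and to interchange $\Delta$ with the integral. The growth estimates of Section \ref{thm:AsymptoticSpectral} are stated for one variable fixed or for both variables going to infinity within strips/half-planes, and stitching these into a genuine two-variable Fubini-plus-Cauchy argument (ensuring no spurious contributions arise from the ``corners at infinity'' of the $\mathbb{C}^2$ contour, and that the pole contributions from $P_{++}$ are correctly bookkept as the incident wave) is the delicate part. Everything else is, as the authors put it for the forward direction, lengthy but straightforward bookkeeping; a reference to the analogous quarter-plane argument in \cite{AssierShanin2019} (via Remark \ref{rem:SimilarityToQP}) should streamline the write-up considerably.
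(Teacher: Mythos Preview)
Your treatment of the Helmholtz equations and of the support of the fields is broadly in the right spirit, though the paper argues slightly differently: rather than shifting contours, it observes that the Wiener--Hopf identity forces $\Delta\phi_{\mathrm{sc}}+k_1^2\phi_{\mathrm{sc}}=\Delta\psi+k_2^2\psi$ as distributions on $\mathbb{R}^2$, and then uses the disjointness of the supports of $\phi_{\mathrm{sc}}$ and $\psi$ (coming from $1/4$- and $3/4$-basedness via uniqueness of the inverse Fourier transform) to conclude that both sides vanish. Your contour-deformation version is morally equivalent, though note that $K_2\Psi_{++}$ is only $\mathcal{O}(1)$ at infinity, so the ``close the contour in the UHP'' step would need to be handled with some care.

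The real gap is in the boundary conditions. Your proposal asserts that \eqref{eq:1.3}--\eqref{eq:1.6} are ``essentially built into how the one-quarter and three-quarter transforms split the plane'' and can be read off by collapsing one Fourier integral. This is not correct: basedness by itself only pins down the supports of $\phi_{\mathrm{sc}}$ and $\psi$; it does not give continuity of the fields or of their normal derivatives across $\partial\mathrm{PW}$. The difficulty is that one Wiener--Hopf equation must yield \emph{two} independent interface conditions on each face, and you have not explained how this separation occurs. In the paper this is the heart of the proof: Green's theorem is applied to rewrite $K_1\Phi_{3/4}$ and $K_2\Psi_{++}$ as half-line Fourier transforms of boundary data, and combining these with the Wiener--Hopf relation and the explicit form of $K_1P_{++}$ produces an identity of the shape
\[
F_1(\alpha_2)+F_2(\alpha_1)+\alpha_1F_3(\alpha_2)+\alpha_2F_4(\alpha_1)=0 \quad \text{for all } (\alpha_1,\alpha_2),
\]
where $F_1,\dots,F_4$ encode exactly the jumps $\phi-\psi$ and $\partial_{\n}\phi-\partial_{\n}\psi$ on the two faces. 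A dedicated uniqueness lemma (Theorem~\ref{lem:FromIntegralToInterface}) is then needed to conclude $F_1=F_2=F_3=F_4\equiv 0$. Without this Green's-theorem-plus-separation step, your sketch does not close; the reference to \cite{AssierShanin2019} does not help here, as that paper is invoked for the analytical continuation formulae, not for this equivalence theorem.
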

	
	\begin{proof}
		Since $\Psi_{++}$ is 1/4-based, there exists a function $\psi_{1/4}:\text{PW} \to \mathbb{C}$ with $\mathcal{F}_{1/4}(\psi_{1/4}) = \Psi_{++}$ and similarly, there exists a function $\phi_{3/4}:\mathbb{R}^2 \setminus \text{PW} \to \mathbb{C}$ with $\Phi_{3/4} = \mathcal{F}_{3/4}(\phi_{3/4})$. If we now set $\psi_{1/4} \equiv 0$ on $\mathbb{R}^2 \setminus \text{PW}$ and $\phi_{3/4} \equiv 0$ on $\text{PW}$ we obtain $\Psi_{++} = \mathcal{F}(\psi_{1/4})$ and $\Phi_{3/4} = \mathcal{F}(\phi_{3/4})$ where $\mathcal{F} = \mathcal{F}_{1/4} + \mathcal{F}_{3/4}$ is just the usual 2D Fourier-transform. But then, by uniqueness of the inverse Fourier-transform, we find $\psi = \psi_{1/4}$ and $\phi_{\mathrm{sc}} = \phi_{3/4}$. In particular, $\psi \equiv 0$ on $\mathbb{R}^2 \setminus \text{PW}$ and $\phi_{\mathrm{sc}} \equiv 0$ on PW. 
		Now, by direct calculation using \eqref{eq.WienerHopf}, \eqref{eq.phiscDef}, and \eqref{eq.psiDef} we find
		\begin{align}
			\Delta \phi_{\mathrm{sc}} + k^2_1\phi_{\mathrm{sc}} = \Delta \psi + k^2_2 \psi. \label{eq.Zero=Zero}
		\end{align}
		But since $\psi \equiv 0$ on $\mathbb{R}^2 \setminus \text{PW}$ and $\phi_{\mathrm{sc}} \equiv 0$ on $\text{PW}$, we find that \eqref{eq.Zero=Zero} can only be satisfied if both sides of this equation vanish identically on $\mathbb{R}^2$. In particular
		\begin{align}
			\Delta \phi_{\mathrm{sc}} + k^2_1\phi_{\mathrm{sc}} & =0  \ \text{ in } \mathbb{R}^2 \setminus \text{PW}, \label{eq.RecoveredEq1} \\
			\Delta \psi + k^2_2\psi &=0 \ \text{ in } \text{PW}. \label{eq.RecoveredEq2}
		\end{align}
		Since the incident wave always satisfies \eqref{eq:1.1}, by setting $\phi= \phi_{\iin}+\phi_{\mathrm{sc}}$, we have recovered the Helmholtz equations \eqref{eq:1.1} and \eqref{eq:1.2}. To recover the boundary conditions \eqref{eq:1.3}--\eqref{eq:1.6}, write
		
		\begin{align}
			K(\aalpha) = \frac{K_2(\aalpha)}{K_1(\aalpha)}, \ \text{ for } K_2(\aalpha) = k^2_2 - \alpha^2_1 - \alpha^2_2 \ \text{ and } \ K_1(\aalpha) = k^2_1 - \alpha^2_1 - \alpha^2_2.
		\end{align}
		By a direct computation using Green's theorem (cf.\! \cite{Kunz2021diffraction}), we have 
		\begin{align*}
			\mathcal{F}_{3/4}[\Delta \phi_{\mathrm{sc}} +k^2_1\phi_{\mathrm{sc}}]  = & -K_1(\aalpha)\Phi_{3/4}(\aalpha) 
			+ \int_{\partial \text{PW}} (\partial_{\n} \phi_{\mathrm{sc}}) e^{i \aalpha \cdot \x} dl  
			-  \int_{\partial \text{PW}} \phi_{\mathrm{sc}} \left(\partial_{\n} e^{i \aalpha \cdot \x}\right) dl,  \numberthis \label{eq.PhiBoundaryTerms1} \\
			\mathcal{F}_{1/4}[\Delta \psi + k^2_2\psi] = & -K_2(\aalpha) \Psi_{++}(\aalpha) - \int_{\partial \text{PW}} (\partial_{\n} \psi) e^{i \aalpha \cdot \x} dl  
			+ \int_{\partial \text{PW}} \psi \left(\partial_{\n} e^{i \aalpha \cdot \x}\right) dl .  \numberthis \label{eq.PsiBoundaryTerms1}
		\end{align*}
		And since by  \eqref{eq.RecoveredEq1}--\eqref{eq.RecoveredEq2}
		\begin{align*}
			\mathcal{F}_{3/4}[\Delta \phi_{\mathrm{sc}} +k^2_1\phi_{\mathrm{sc}}] = 	\mathcal{F}_{1/4}[\Delta \psi + k^2_2\psi] =0	
		\end{align*}	
		we have by   \eqref{eq.PhiBoundaryTerms1}--\eqref{eq.PsiBoundaryTerms1}
		\begin{alignat}{3}
			K_1(\aalpha)\Phi_{3/4}(\aalpha) &=   &&\int_{\partial \text{PW}} (\partial_{\n} \phi_{\mathrm{sc}}) e^{i \aalpha \cdot \x} dl  
			-  \int_{\partial \text{PW}} \phi_{\mathrm{sc}} \left(\partial_{\n} e^{i \aalpha \cdot \x}\right) dl,  \label{eq.PhiBoundaryTerms}  \\
			K_2(\aalpha) \Psi_{++}(\aalpha) &=  - &&\int_{\partial \text{PW}} (\partial_{\n} \psi) e^{i \aalpha \cdot \x} dl  
			+ \int_{\partial \text{PW}} \psi \left(\partial_{\n} e^{i \aalpha \cdot \x}\right) dl \label{eq.PsiBoundaryTerms}.
		\end{alignat}	
		Therefore, the Wiener-Hopf equation \eqref{eq.WienerHopf} yields
		\begin{align*}
			-K_1(\aalpha) P_{++}(\aalpha) & = K_1(\aalpha) \Phi_{3/4}(\aalpha) + K_2(\aalpha)\Psi_{++}(\aalpha) \\
			& = \int_{\partial \text{PW}} (\partial_{\n} (\phi_{\mathrm{sc}} - \psi)) e^{i \aalpha \cdot \x} dl  
			-  \int_{\partial \text{PW}} (\phi_{\mathrm{sc}} - \psi) \left(\partial_{\n} e^{i \aalpha \cdot \x}\right) dl. \numberthis \label{eq.PsiPhiBoundaryTerms}
		\end{align*}
		By \cite{Kunz2021diffraction} eq. (A.5)--(A.8), we know 
		\begin{align*}
			-K_1(\aalpha) P_{++}(\aalpha) = & - \int_{\partial \text{PW}} (\partial_{\n} \phi_{\iin}) e^{i \aalpha \cdot \x} dl  
			+ \int_{\partial \text{PW}} \phi_{\iin} \left(\partial_{\n} e^{i \aalpha \cdot \x}\right) dl. \numberthis \label{eq.IncidentBoundaryTerms}
		\end{align*}
		Thus, combining \eqref{eq.PsiPhiBoundaryTerms} and \eqref{eq.IncidentBoundaryTerms}, we have
		\begin{align*}
			0 = & \int_{0}^{\infty} ((\partial_{x_1} \phi)(0^-,x_2) -(\partial_{x_1}\psi)(0^+,x_2)) e^{i \alpha_2 x_2} dx_2+ \int_{0}^{\infty} ((\partial_{x_2} \phi)(x_1,0^-)-(\partial_{x_2}\psi)(x_1,0^+)) e^{i \alpha_1 x_1} dx_1  \\
			& - i \alpha_1 \int_{0}^{\infty} (\phi(0^-,x_2) - \psi(0^+,x_2))e^{i \alpha_2 x_2} dx_2 - i \alpha_2 \int_{0}^{\infty} (\phi(x_1,0^-) - \psi(x_1,0^+) )e^{i \alpha_1 x_1} dx_1,   \numberthis \label{eq.BTFirstStep}
		\end{align*}
		and the proof is complete by Theorem \ref{lem:FromIntegralToInterface}, which implies that each integrand of the integrals in \eqref{eq.BTFirstStep} has to be zero. Note that Theorem \ref{lem:FromIntegralToInterface} can be applied due to the far-field decay of the integrands (cf.\! \cite{Kunz2021diffraction} Section 2.3.3) and the Meixner conditions.
		{\hfill{}}
	\end{proof}
	
	\begin{remark}[Asymptotic behaviour]
		Again, the Sommerfeld radiation condition is satisfied due to the positive imaginary part of $k_{1,2}$ and, by the Abelian theorem (cf.\! \cite{Doetsch1974}), the Meixner conditions hold due to the assumed asymptotic behaviour of the spectral functions.
	\end{remark}

	\section{Analytical continuation of spectral functions}\label{sec:AnalyticalContinuation}
	
	Theorem \ref{thm:BackToPhysical} gives immediate motivation for solving the penetrable wedge functional problem described in Definition \ref{def:FirstFunctionalFormulation}. In the one dimensional case, i.e.\! when solving a one dimensional functional problem by means of the Wiener-Hopf technique, the domains of analyticity of the corresponding unknowns are of fundamental importance \cite{Noble1958}. \red{\BLUE{By \cite{AssierShaninKorolkov2022}, we know that the domains of analyticity of the spectral functions $\Psi_{++}$ and $\Phi_{3/4}$, specifically knowledge of their singularities, are of fundamental importance in the two-complex-variable setting as well. Particularly, knowledge of $\Psi_{++}$ and $\Phi_{3/4}$'s singularity structure in $\mathbb{C}^2$, as well as knowledge of the spectral functions' behaviour in the singularities' vicinity, allows one to obtain closed-form far-field asymptotics of the scattered and transmitted fields, as defined via \eqref{eq.phiscDef}--\eqref{eq.psiDef}. 
	To unveil $\Psi_{++}$ and $\Phi_{3/4}$'s singularity structure, we follow \cite{AssierShanin2019}, wherein the domains of analyticity of the two-complex-variable spectral functions to the quarter-plane problem are studied (which, as mentioned in Remark \ref{rem:SimilarityToQP}, is surprisingly similar to the penetrable wedge problem studied in the present article).
	 }}

	\subsection{Some useful functions}\label{subsec:SomeUsefulFunctions}
	
	Let $\sqrt[\rightarrow]{z}$ denote the square root with branch cut on the positive real axis, and with branch determined by $\sqrt[\rightarrow]{1}=1$ (i.e.\! $\arg(z) \in [0, 2\pi)$). In particular, $\Im(\sqrt[\rightarrow]{z}) \geq 0$ for all $z$ and $\Im(\sqrt[\rightarrow]{z})=0$ if, and only if, $z \in (0,\infty)$. 
	
	As shown in \cite{Kunz2021diffraction}, the kernel $K$ defined in \eqref{eq.PandKDefinition} admits the following  factorisation in the $\alpha_1$-plane
	\begin{align}
		K(\aalpha) = K_{+ \circ}(\aalpha) K_{- \circ}(\aalpha)
	\end{align}
	where
	\begin{align}
		K_{+\circ}(\aalpha) = \frac{\sqrt[\rightarrow]{k^2_2-\alpha_2^2} + \alpha_1}{\sqrt[\rightarrow]{k^2_1-\alpha_2^2} + \alpha_1},  \ 
		K_{-\circ}(\aalpha) = \frac{\sqrt[\rightarrow]{k^2_2-\alpha_2^2} - \alpha_1}{\sqrt[\rightarrow]{k^2_1-\alpha_2^2} - \alpha_1},
	\end{align}
	and the functions $K_{+\circ}$ and $K_{-\circ}$ are analytic in $\UHP(-\varepsilon) \times \S$ and $\LHP(\varepsilon) \times \S$, respectively, for $\varepsilon$ as in \eqref{eq.VarepsilonDef}.\footnote{In \cite{Kunz2021diffraction}, it was proved that $K_{-\circ}$, say, is analytic in $\LHP(-\tilde{\varepsilon}) \times \S$ where $\tilde{\varepsilon} = \min\{\varepsilon, \min_{\alpha}{\sqrt[\rightarrow]{k_1^2-\alpha^2}}\}$ but
	it can be shown that $\tilde{\varepsilon} \geq \varepsilon$ for $\varepsilon$ given by \eqref{eq.VarepsilonDef}.} Analogously, we may choose to factorise in the $\alpha_2$-plane:
	\begin{align}
		K(\aalpha) = K_{\circ +}(\aalpha) K_{\circ -}(\aalpha)
	\end{align}
	where
	\begin{align}
		K_{\circ+}(\aalpha) = \frac{\sqrt[\rightarrow]{k^2_2-\alpha_1^2} + \alpha_2}{\sqrt[\rightarrow]{k^2_1-\alpha_1^2} + \alpha_2},  \ 
		K_{\circ-}(\aalpha) = \frac{\sqrt[\rightarrow]{k^2_2-\alpha_1^2} - \alpha_2}{\sqrt[\rightarrow]{k^2_1-\alpha_1^2} - \alpha_2},
	\end{align}
	where $K_{\circ +}$ and $K_{\circ -}$ are analytic in $\S \times \UHP(-\varepsilon)$ and $\S \times \LHP(\varepsilon)$, respectively. 
	See \cite{Kunz2021diffraction} for a visualisation of $\sqrt[\rightarrow]{z}, \sqrt[\rightarrow]{k^2_{j} - z^2}, \ j=1,2$, and $K_{\pm \circ}$ using phase portraits in the spirit of \cite{Wegert2012}.

	\subsection{Primary formulae for analytical continuation}\label{subsec:FormulaeForAnalyticalContinuation}
	
	Using the kernel's factorisation given in Section \ref{subsec:SomeUsefulFunctions}, we have the following  analytical continuation formulae: 
	\begin{theorem}\label{thm:AnalyticalContinuationPsi++FirstFormulae}
		For $\aalpha \in \S \times \S$,  we have
		\begin{align*}
			\Psi_{++}(\aalpha) =& \frac{1}{4 \pi^2 K_{\circ +}(\aalpha)} \int_{-\infty - i \varepsilon}^{\infty - i \varepsilon} \int_{-\infty + i \varepsilon}^{\infty + i \varepsilon} \frac{K(z_1,z_2) \Psi_{++}(z_1,z_2)}{K_{\circ -}(\alpha_1,z_2)(z_2-\alpha_2)(z_1-\alpha_1)} dz_1 dz_2 \\
			& -  \frac{P_{++}(\aalpha)}{K_{\circ -} (\alpha_1, \mathfrak{a}_2)K_{\circ +}(\aalpha)},
			\numberthis \label{eq.PsiContFormula1} \\
			\Psi_{++}(\aalpha) = & \frac{1}{4 \pi^2 K_{+ \circ}(\aalpha)} \int_{-\infty + i \varepsilon}^{\infty + i \varepsilon} \int_{-\infty - i \varepsilon}^{\infty - i \varepsilon} \frac{K(z_1,z_2) \Psi_{++}(z_1,z_2)}{K_{-\circ }(z_1,\alpha_2)(z_2-\alpha_2)(z_1-\alpha_1)} dz_1 dz_2 \\
			& -  \frac{P_{++}(\aalpha)}{K_{- \circ} (\mathfrak{a}_1, \alpha_2)K_{+\circ}(\aalpha)}.
			\numberthis \label{eq.PsiContFormula2}
		\end{align*}	
		Here, and for the remainder of the article, $\mathfrak{a}_1$ and $\mathfrak{a}_2$ are given by \eqref{eq.a_1,2Definition}.
	\end{theorem}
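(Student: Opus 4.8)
The plan is to derive \eqref{eq.PsiContFormula1} by a one-dimensional Wiener-Hopf split applied successively in each variable, exploiting the factorisations of $K$ and the known analyticity and decay of the spectral functions. First I would rewrite the Wiener-Hopf equation \eqref{eq.WienerHopf} in the form $-K(\aalpha)\Psi_{++}(\aalpha) - P_{++}(\aalpha) = \Phi_{3/4}(\aalpha)$ and divide by $K_{\circ-}(\aalpha)$, so that the left-hand side becomes $-K_{\circ+}(\aalpha)\Psi_{++}(\aalpha) - P_{++}(\aalpha)/K_{\circ-}(\aalpha)$ while the right-hand side is $\Phi_{3/4}(\aalpha)/K_{\circ-}(\aalpha)$. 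In the $\alpha_2$ variable (with $\alpha_1$ fixed in $\S$), the term $K_{\circ+}(\aalpha)\Psi_{++}(\aalpha)$ is analytic in $\UHP(-\varepsilon)$ and decays, while $\Phi_{3/4}/K_{\circ-}$ is analytic in $\LHP(\varepsilon)$; the rational term $P_{++}/K_{\circ-}$ has its only pole (in the relevant half-plane) at $\alpha_2 = \mathfrak{a}_2$, which I extract explicitly, giving the term $-P_{++}(\aalpha)/(K_{\circ-}(\alpha_1,\mathfrak{a}_2)K_{\circ+}(\aalpha))$ in \eqref{eq.PsiContFormula1}. The remaining `$+$' part is recovered by a Cauchy-type integral over the contour $\Im(z_2) = -\varepsilon$, using the decay estimates of Section \ref{thm:AsymptoticSpectral} to justify closing contours and the absence of contributions from infinity.

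Next I would repeat the argument in the $\alpha_1$ variable. After the first split, the `$+$'-in-$\alpha_2$ part is expressed as a Cauchy integral whose integrand still contains $\Psi_{++}(z_1,z_2)$ evaluated on $\Im(z_2)=-\varepsilon$ but with $z_1$ ranging over $\S$; one then applies the $\alpha_1$-factorisation $K = K_{+\circ}K_{-\circ}$ and an analogous additive split in $z_1$, using that $K_{+\circ}$ pairs with the $\UHP$-analytic factor. Writing $K_{\circ-}(\alpha_1,z_2)$ in the denominator (which comes from the first division) and reinserting $K(z_1,z_2)\Psi_{++}(z_1,z_2)$ via the Wiener-Hopf identity $-K\Psi_{++} = \Phi_{3/4}+P_{++}$ applied inside the integral, one arrives at the double integral in \eqref{eq.PsiContFormula1} with the kernel $K(z_1,z_2)\Psi_{++}(z_1,z_2)/(K_{\circ-}(\alpha_1,z_2)(z_2-\alpha_2)(z_1-\alpha_1))$ over the bicontour $\Im(z_1)=+\varepsilon$, $\Im(z_2)=-\varepsilon$. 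Formula \eqref{eq.PsiContFormula2} is obtained identically, simply exchanging the roles of the two variables (split first in $\alpha_1$, then in $\alpha_2$), which swaps $K_{\circ\pm}\leftrightarrow K_{\pm\circ}$ and interchanges the contour orientations and the location of the extracted pole to $\alpha_1 = \mathfrak{a}_1$.

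The main obstacle I anticipate is the bookkeeping of \emph{domains of analyticity} at each split: one must check at every stage that the factor being labelled `$+$' (resp. `$-$') is genuinely analytic in the asserted half-plane strip $\UHP(-\varepsilon)$ (resp. $\LHP(\varepsilon)$), including the mixed object $\Psi_{++}(z_1,z_2)K(z_1,z_2)/K_{\circ-}(\alpha_1,z_2)$ whose analyticity in $z_2$ depends on the zeros of $K_{\circ-}(\alpha_1,\cdot)$ and the branch points of $\sqrt[\rightarrow]{k_j^2 - \alpha_1^2}$ not entering the integration strip — this is exactly where the precise value of $\varepsilon$ from \eqref{eq.VarepsilonDef} and the footnote estimate $\tilde\varepsilon \ge \varepsilon$ are used. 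A secondary technical point is justifying the interchange of the two one-dimensional integrals (Fubini) and the convergence of the double integral, both of which follow from the product decay estimate \eqref{Psi++Behaviour2} together with the boundedness of $K$ and the factorisation factors along the horizontal contours; the rational factors $1/(z_j-\alpha_j)$ contribute only the expected simple-pole residues, which are precisely the terms reproducing $\Psi_{++}(\aalpha)$ itself on the left-hand side. Once the domain checks are in place, the rest is a routine application of the one-dimensional additive (Cauchy) decomposition carried out twice.
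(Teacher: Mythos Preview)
Your overall strategy --- successive one-dimensional Cauchy splits in each variable, combined with a pole-removal for the forcing $P_{++}$ --- is indeed the approach used in \cite{AssierShanin2019} (and hence, by reference, in the paper). However, there is a genuine gap in how you handle $\Phi_{3/4}$.

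You claim that, for fixed $\alpha_1\in\S$, the function $\Phi_{3/4}(\alpha_1,\cdot)/K_{\circ-}(\alpha_1,\cdot)$ is analytic in $\LHP(\varepsilon)$. This is not true: $\Phi_{3/4}$ is $3/4$-based, not `$--$'-based, and in particular it contains a piece $\Phi_{-+}$ which is analytic in the \emph{upper} half-plane in $\alpha_2$ (coming from the $Q_2$-contribution to $\mathcal{F}_{3/4}$). Hence your first split in $\alpha_2$ does not separate cleanly, and the subsequent Liouville/Cauchy argument fails as written. The paper's route (inherited from \cite{AssierShanin2019}, Appendix~A) fixes this by first writing
\[
\Phi_{3/4}=\Phi_{-+}+\Phi_{--}+\Phi_{+-},
\]
with each summand analytic in the indicated product of half-planes. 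The inner $z_1$-integral in \eqref{eq.PsiContFormula1} is then precisely the $[\,\cdot\,]_-^{z_1}$ projection of $K\Psi_{++}=-\Phi_{3/4}-P_{++}$; the pieces $\Phi_{+-}$ and $P_{++}$ (both `$+$' in $z_1$) are annihilated, leaving $\Phi_{-+}+\Phi_{--}$. The outer $z_2$-integral, combined with the $K_{\circ-}(\alpha_1,z_2)$ denominator, then kills $\Phi_{--}$ and $\Phi_{+-}$ (both `$-$' in $z_2$) and delivers the explicit $P_{++}/K_{\circ-}(\alpha_1,\mathfrak a_2)$ term via the residue at $z_2=\mathfrak a_2$.

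A secondary issue: in your second paragraph you invoke the $\alpha_1$-factorisation $K=K_{+\circ}K_{-\circ}$ for the second split. But neither $K_{+\circ}$ nor $K_{-\circ}$ appears anywhere in \eqref{eq.PsiContFormula1}; only the $\alpha_2$-factorisation $K=K_{\circ+}K_{\circ-}$ is used there. The $z_1$-integral is a \emph{purely additive} Cauchy split of $K\Psi_{++}$ in the first variable, not a multiplicative one --- and it is exactly this additive split that requires the quadrant decomposition of $\Phi_{3/4}$ above. Once you insert that decomposition, the rest of your outline (pole removal, Fubini via \eqref{Psi++Behaviour2}, domain checks based on $\varepsilon$) goes through.
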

	The theorem's proof is the exact same as the proof of the analytical continuation formulae for the quarter-plane problem (cf.\! equations $(33)$--$(34)$ and Appendix A in \cite{AssierShanin2019}) and hence omitted. Indeed, we can write $\Phi_{3/4} = \Phi_{-+} + \Phi_{--} + \Phi_{+-}$ for functions $\Phi_{-+}, \ \Phi_{--}$ and $\Phi_{+-}$ analytic in $\LHP(\RED{\varepsilon}) \times \UHP(\RED{-\varepsilon}), \ \LHP(\RED{\varepsilon}) \times \LHP(\RED{\varepsilon})$ and $\UHP(\RED{-\varepsilon}) \times \LHP(\RED{\varepsilon})$, respectively (\Blue{see} \cite{Kunz2021diffraction} eq.\! $(2.21)$), and the analyticity properties of $\Phi_{-+}, \ \Phi_{--}$ and $\Phi_{+-}$ in these domains as well as the analyticity of $\Psi_{++}$ in $\UHP(\RED{-\varepsilon}) \times \UHP(\RED{-\varepsilon})$ are the only key points to finding \eqref{eq.PsiContFormula1} and \eqref{eq.PsiContFormula2}, and these domains agree with those of the quarter-plane problem. 
	The only difference of \eqref{eq.PsiContFormula1}--\eqref{eq.PsiContFormula2} and the corresponding analytical continuation formulae in the quarter-plane problem is due to the difference of the kernel $K$, as discussed in Remark \ref{rem:SimilarityToQP}, and its factorisation.
	
	Observe that the variable on the LHS of \eqref{eq.PsiContFormula1}--\eqref{eq.PsiContFormula2} is $\aalpha \in \S \times \S$. Since all terms involving $\aalpha$ on the equations' RHS are known explicitly, we can choose $\aalpha$ to belong to a domain much larger than $\S \times \S$, thus providing an analytical continuation of $\Psi_{++}$. This procedure will be discussed in the following sections.
	
	\begin{remark}\label{remark:Leray}
		The double integrals in formulae \eqref{eq.PsiContFormula1} and \eqref{eq.PsiContFormula2} can be rewritten as one dimensional Cauchy integrals, as outlined in Appendix \ref{Appendix:Residues}. In the quarter-plane problem discussed in \cite{AssierShanin2019}, such a simplification is not possible as  the singularity of the kernel $K$ is a branch-set, whereas in our case it is just a polar singularity. However, such simplifications of the integral formulae do not significantly simplify the analytical continuation procedure discussed in Sections \ref{subsec:FirstStep} and \ref{subsec:SecondStep} and are hence omitted at this stage.
	\end{remark}
	
	\subsection{Domains for analytical continuation}\label{subsec:DomainsForCont}
	
	For $x_0 \in \mathbb{R}$, let us define the domains $H^+(x_0) \subset \mathbb{C}$ and $H^-(x_0) \subset \mathbb{C}$ as $H^+(x_0)= \UHP(x_0) \setminus \left(h^+_1 \cup h^+_2\right)$ and $H^-(x_0) = \LHP(x_0) \setminus \left(h^-_1 \cup h^-_2\right)$, where $h^+_1, \ h^+_2, \ h^-_1,$ and $h^-_2$ are the curves given by
	\begin{alignat*}{3}
		&	h^-_{j} =  \left\{ - \sqrt[\rightarrow]{k^2_j -x^2}| \ x \in \mathbb{R} \right\}, \ && j=1,2, \\
		&	h^+_j =  \left\{  \sqrt[\rightarrow]{k^2_j -x^2}  | \ x \in \mathbb{R} \right\}, \ && j=1,2.
	\end{alignat*}
	Moreover, we set $H^- = H^-(0), H^+= H^+(0)$,
	see Figure \ref{fig:HHP} for an illustration.
	By the properties of $\sqrt[\rightarrow]{z}$, and due to the positive imaginary part of $k_{1}$ and $k_2$ we indeed have $ h^+_j\in \UHP$ for $j=1,2$ and consequently $h^-_j \in \LHP \ j=1,2$; see \cite{AssierShanin2019}. Moreover, the following holds: 
	\begin{lemma}[\cite{AssierShanin2019}, Lemma 3.2]\label{lem:KappaProperties}
		If $z \in \mathbb{C}\setminus (h^-_1 \cup h^-_2 \cup h^+_1 \cup h^+_2)$, then $\sqrt[\rightarrow]{k^2_j -z^2} \in H^+$ for $j=1,2$.
	\end{lemma}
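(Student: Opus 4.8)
The plan is to verify separately the two conditions defining $H^+ = \UHP \setminus (h^+_1 \cup h^+_2)$ for the value $w := \sqrt[\rightarrow]{k^2_j - z^2}$: that $\Im(w) > 0$, and that $w$ lies on neither $h^+_1$ nor $h^+_2$. The one ingredient I would set up first is a convenient reformulation of membership in the branch curves: for each $m$, a point $\zeta$ lies on $h^+_m \cup h^-_m$ if and only if $\zeta^2 \in k^2_m - [0,\infty) := \{k^2_m - t : t \geq 0\}$ — immediate from the definitions, since $\zeta = \pm\sqrt[\rightarrow]{k^2_m - x^2}$ for some real $x$ exactly when $\zeta^2 = k^2_m - x^2$ for some real $x$. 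Note too that $\Im(k^2_m) = 2\Re(k_m)\Im(k_m) > 0$, so this ray lies strictly in the open upper half-plane (hence is disjoint from $[0,\infty)$), and that $\sqrt[\rightarrow]{k^2_m} = k_m$.

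For $\Im(w) > 0$: by the stated sign property, $\Im(\sqrt[\rightarrow]{\zeta}) = 0$ only for $\zeta \in [0,\infty)$. If $\Im(w) = 0$ then $k^2_j - z^2 \in [0,\infty)$, so $z^2 \in k^2_j - [0,\infty)$, whence $z \in h^+_j \cup h^-_j$ by the reformulation — contradicting the hypothesis; the degenerate case $w = 0$ similarly gives $z = \pm k_j \in h^+_j \cup h^-_j$. Hence $w \in \UHP$.

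For $w \notin h^+_1 \cup h^+_2$ I would argue by contradiction. If $w \in h^+_i$ then $w = \sqrt[\rightarrow]{k^2_i - y^2}$ for some real $y$; since $w \in \UHP$ we have $\arg(w) \in (0,\pi)$ and hence $\sqrt[\rightarrow]{w^2} = w$, so squaring the two expressions for $w$ gives $k^2_j - z^2 = k^2_i - y^2$, i.e. $z^2 = (k^2_j - k^2_i) + y^2$. I would then aim to show this forces $z$ onto one of $h^\pm_1, h^\pm_2$, contradicting the hypothesis, by comparing the ray $\{(k^2_j - k^2_i) + s : s \geq 0\}$ with the rays $k^2_m - [0,\infty)$, $m = 1,2$, and applying the reformulation again — treating the easy case $i = j$ (where $z^2 = y^2$) and the case $i \neq j$ separately.

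The hard part will be this last step: showing that $w$ avoids $h^+_1 \cup h^+_2$ is delicate because it amounts to understanding how the preimage, under $\zeta \mapsto \sqrt[\rightarrow]{k^2_j - \zeta^2}$, of each branch curve sits relative to the four curves $h^\pm_{1,2}$, and this is exactly where the precise positions of the rays $k^2_m - [0,\infty)$ in the upper half-plane, and the fact that $z$ is assumed to miss all four curves rather than just $h^\pm_j$, must be used. The positivity of $\Im(w)$, by contrast, is a quick consequence of the sign property of $\sqrt[\rightarrow]{\,\cdot\,}$ together with the squaring reformulation.
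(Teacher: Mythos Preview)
The paper does not give a proof of this lemma; it is quoted verbatim from \cite{AssierShanin2019}, so there is no in-paper argument to compare against. I can only assess your proposal on its own merits.

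Your argument that $w \in \UHP$ is correct and complete. The second half, however, has a genuine gap --- and it is not a gap that can be closed. You label the case $i=j$ ``easy'', but carry it through: $z^2 = y^2$ with $y$ real forces $z \in \mathbb{R}$, and a real $z$ has $z^2 \in [0,\infty)$, which by your own observation is \emph{disjoint} from each ray $k^2_m - [0,\infty)$ (those rays sit in the open upper half-plane). Hence by your reformulation a real $z$ lies on none of $h^\pm_1, h^\pm_2$, and there is no contradiction with the hypothesis. Worse, this shows the lemma as literally stated is false for $z \in \mathbb{R}$: such $z$ satisfies the hypothesis, yet $\sqrt[\rightarrow]{k^2_j - z^2} \in h^+_j$ by the very definition of $h^+_j$, so $w \notin H^+$. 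The cross case $i \neq j$ fails similarly: $z^2 = (k^2_j - k^2_i) + y^2$ lands on a ray $k^2_m - [0,\infty)$ only under a numerical coincidence between $\Im(k^2_1)$ and $\Im(k^2_2)$ that is not assumed.

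What this tells you is that either the original statement in \cite{AssierShanin2019} carries a tacit extra hypothesis such as $z \notin \mathbb{R}$, or the intended conclusion is only the weaker $w \in \UHP$. The latter is all that the present paper ever invokes (see the proof of Lemma~\ref{lem:JFirstCont}, where the lemma is applied with $\alpha_1 \in H^- \subset \LHP$ and used only to conclude $\sqrt[\rightarrow]{k^2_2 - \alpha_1^2} \notin \mathbb{R}$). Your proof of that weaker statement is sound; the full claim $w \notin h^+_1 \cup h^+_2$ cannot be derived from the stated hypothesis, so you should not expect your contradiction strategy to succeed there.
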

	
	\begin{figure}[h]
		\centering
		\includegraphics[width=\textwidth]{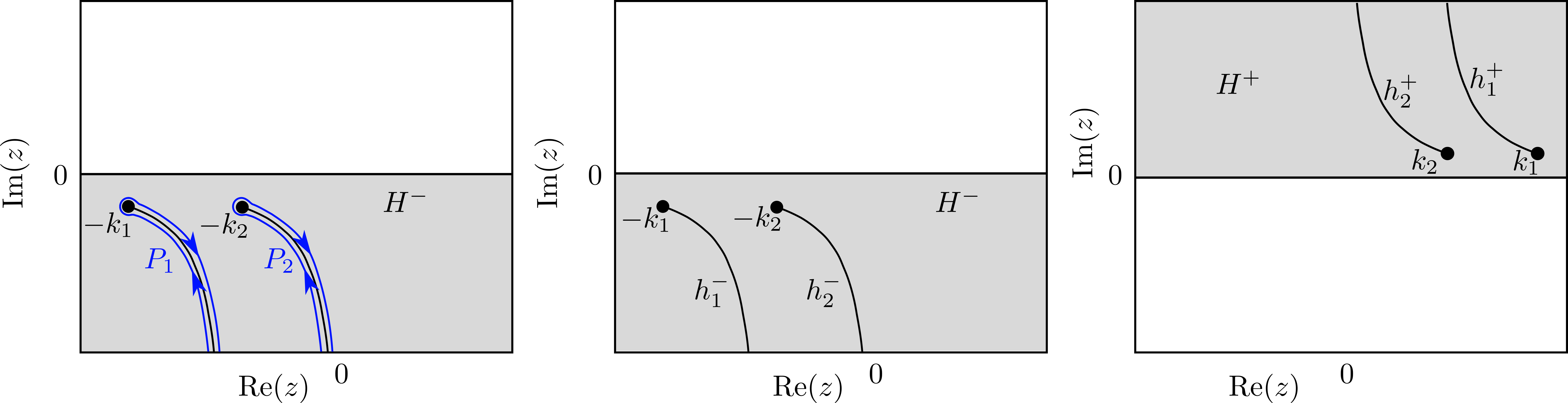}
		\caption{Domains $H^-$ (middle), $H^+$ (right), and contours $P_{1,2}$ (left).}
		\label{fig:HHP}
	\end{figure}
	
	Now, define the contours $P_1$ and $P_2$ as the boundaries of $\mathbb{C} \setminus h^-_1$ and $\mathbb{C} \setminus h^-_2$, see Figure \ref{fig:HHP}. That is, for $j=1,2$, $P_j$ is the contour `starting at $-i\infty$' and moving up along $h^-_j$'s left side, up to $-k_j$, and then moving back towards $-i \infty$ along $h^-_j$'s right side. Intuitively, $P_j$ is just $h^-_j$ but `keeps track' of which side $h^-_j$ was approached from. Set $P = P_1 \cup P_2$.
	
	\begin{remark}
		Formally, all of the following analysis is a priori only valid if the contours $P_1$ and $P_2$ do not cross the points $-k_1$ and $-k_2$, since these are branch points of $\sqrt[\rightarrow]{k^2_1-z^2}$ and $\sqrt[\rightarrow]{k^2_2-z^2}$, so we would have to account for an arbitrarily small circle of radius $b_0$, say, enclosing $-k_1$ and $-k_2$, respectively. However, it is straightforward to show that all formulae remain valid as $b_0  \to 0$, so we do not need to account for this technicality.
	\end{remark}

	\subsection{\normalsize First step of analytical continuation: Analyticity properties of $\Psi_{++}$}\label{subsec:FirstStep}
	
	Since, as already pointed out, the only difference between the present work and \cite{AssierShanin2019} is the structure of the kernel $K$ and, therefore, the structure of the domains $H^{+}$ and $H^-$, most of the following discussion very closely follows \cite{AssierShanin2019}, and we will just sketch most of the details. The reader familiar with \cite{AssierShanin2019} may wish to skip to Theorem \ref{thm:Psi++Domains}. 
	
	Let us first analyse the integral term in \eqref{eq.PsiContFormula1}. As in \cite{AssierShanin2019}, by an application of Stokes' theorem, which tells us that it is possible to deform the surface of integration continuously without changing the value of the integral as long as no singularity is hit during the deformation (cf.\! \cite{Shabat1991}), it is possible to show that:
	\begin{lemma}\label{lem:JStokes}
		For $\aalpha \in \LHP \times \UHP$, the integral term in \eqref{eq.PsiContFormula1} given by
		\begin{align}
			J(\aalpha) = \int_{-\infty - i \varepsilon}^{\infty - i \varepsilon} \int_{-\infty + i \varepsilon}^{\infty + i \varepsilon} \frac{K(z_1,z_2) \Psi_{++}(z_1,z_2)}{K_{\circ -}(\alpha_1,z_2)(z_2-\alpha_2)(z_1-\alpha_1)} dz_1 dz_2 \label{eq:JDefinition01}
		\end{align}
		satisfies 
		\begin{align}
			J(\aalpha) = \int\hspace{-.2cm}\int_{\mathbb{R}^2} \frac{K(z_1,z_2) \Psi_{++}(z_1,z_2)}{K_{\circ -}(\alpha_1,z_2)(z_2-\alpha_2)(z_1-\alpha_1)} dz_1 dz_2. \label{eq:JDefinition}
		\end{align}		
	\end{lemma}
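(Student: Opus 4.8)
The plan is to deform the surface of integration in $J(\aalpha)$, which is currently the product of a line at height $-\varepsilon$ in the $z_1$-plane and a line at height $+\varepsilon$ in the $z_2$-plane, continuously down (resp. up) onto the real axis in each variable, invoking the multidimensional Stokes/Cauchy theorem (as in \cite{Shabat1991} and \cite{AssierShanin2019}) to argue that the value of the integral is unchanged provided no singularity of the integrand is crossed during the homotopy. The integrand as a function of $(z_1,z_2)$ has potential singularities coming from three sources: (i) the factor $1/(z_1-\alpha_1)$, (ii) the factor $1/(z_2-\alpha_2)$, and (iii) the analytic structure of $K(z_1,z_2)\Psi_{++}(z_1,z_2)/K_{\circ-}(\alpha_1,z_2)$. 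So the key point is to check that, for $\aalpha$ in the stated region $\LHP\times\UHP$, none of these obstruct the deformation onto $\mathbb{R}^2$.

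First I would treat the $z_1$-variable. Here the relevant singularities of the integrand are the pole at $z_1 = \alpha_1$ and whatever non-analyticity $K(z_1,z_2)\Psi_{++}(z_1,z_2)$ has in $z_1$. Since $\Psi_{++}$ is analytic in $\UHP(-\varepsilon)\times\UHP(-\varepsilon)$ and $K$ is analytic away from its polar set $k_1^2=\alpha_1^2+\alpha_2^2$ (and, crucially, $K(z_1,z_2)\Psi_{++}(z_1,z_2)$ is jointly analytic in a neighbourhood of $\{\Im z_1 \le 0\}$ once we recall that the $K_1$ in the denominator is cancelled — this is exactly the mechanism used in \cite{AssierShanin2019}), the only thing to avoid when sliding the $z_1$-contour from $\Im z_1 = -\varepsilon$ up to $\Im z_1 = 0$ is the pole $z_1=\alpha_1$; and $\alpha_1\in\LHP$ means $\Im\alpha_1<0$, so the pole stays strictly below the swept region and is never hit. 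The decay of $\Psi_{++}$ in $z_1$ from the spectral edge conditions in Section \ref{thm:AsymptoticSpectral} guarantees the contributions from the ends at $\pm\infty$ vanish, so the deformation is legitimate. This moves the $z_1$-integral onto the real line.

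Next I would do the same for $z_2$, sliding the contour from $\Im z_2 = +\varepsilon$ down to $\Im z_2 = 0$. Now the integrand's $z_2$-dependence sits in $K(z_1,z_2)\Psi_{++}(z_1,z_2)$, in $1/K_{\circ-}(\alpha_1,z_2)$, and in the pole $z_2=\alpha_2$. Again $K\Psi_{++}$ is analytic in the relevant half-plane $\Im z_2 \ge 0$ region; the factor $1/K_{\circ-}(\alpha_1,z_2)$ is analytic for $z_2$ in (a neighbourhood of) the upper half-plane because $K_{\circ-}$ is analytic and zero-free in $\mathcal S\times\LHP(\varepsilon)$ — one checks its zeros, i.e.\ the set $\alpha_2=\sqrt[\rightarrow]{k_2^2-\alpha_1^2}\in h^+_2\subset\UHP$, lie in the region being left behind, not swept; and the pole $z_2=\alpha_2$ is avoided since $\alpha_2\in\UHP$ means $\Im\alpha_2>0$, so it sits strictly above the swept strip. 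Combining the two deformations — which can be carried out simultaneously since the singular sets in $z_1$ and $z_2$ are controlled independently — gives \eqref{eq:JDefinition}.

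The main obstacle, and the step that deserves the most care, is verifying the joint analyticity of the integrand over the full two-dimensional surface swept out during the homotopy — it is not enough that each one-variable slice be analytic; one needs that the singular set of $K(z_1,z_2)\Psi_{++}(z_1,z_2)/K_{\circ-}(\alpha_1,z_2)$ in $\mathbb{C}^2$ does not intersect the region $\{-\varepsilon\le\Im z_1\le 0\}\times\{0\le\Im z_2\le\varepsilon\}$ nor the polar hyperplanes $z_1=\alpha_1$, $z_2=\alpha_2$ at any intermediate stage. Since all of these facts are inherited verbatim from the quarter-plane analysis of \cite{AssierShanin2019} with the only change being the polar (rather than branch) nature of $K$'s singularity, I would state the deformation argument, point to the precise analyticity inputs (analyticity of $\Psi_{++}$ in $\UHP(-\varepsilon)\times\UHP(-\varepsilon)$, analyticity and zero-freeness of $K_{\circ-}$ in $\mathcal S\times\LHP(\varepsilon)$, the spectral edge decay), note that the pole positions are separated from the swept region precisely because $\aalpha\in\LHP\times\UHP$, and refer to \cite{AssierShanin2019} and \cite{Shabat1991} for the underlying multidimensional Cauchy–Poincaré mechanism, exactly as is done for Theorem \ref{thm:AnalyticalContinuationPsi++FirstFormulae}.
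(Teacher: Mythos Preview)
Your overall strategy is the same as the paper's --- a Stokes/Cauchy deformation of the integration surface onto $\mathbb{R}^2$, checking analyticity of the integrand on the swept region and using the spectral edge decay to kill boundary terms at infinity --- and with the right bookkeeping this is all that is needed.

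There is, however, a concrete error in your reading of the contours. In \eqref{eq:JDefinition01} the inner integral is $dz_1$ along $\Im(z_1)=+\varepsilon$ and the outer integral is $dz_2$ along $\Im(z_2)=-\varepsilon$; you have these swapped. The correct swept region is therefore
\[
\{0<\Im(z_1)<\varepsilon\}\times\{-\varepsilon<\Im(z_2)<0\},
\]
exactly as in the paper's sketch (and Figure \ref{fig:ContourDeform1st}). This matters: with your (incorrect) swept region $\{-\varepsilon<\Im(z_1)<0\}\times\{0<\Im(z_2)<\varepsilon\}$, the pole at $z_1=\alpha_1$ with $\alpha_1\in\LHP$ can perfectly well satisfy $-\varepsilon<\Im(\alpha_1)<0$ and hence be crossed, and likewise the pole at $z_2=\alpha_2\in\UHP$ can sit in $\{0<\Im(z_2)<\varepsilon\}$. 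So your assertion that ``the pole stays strictly below the swept region'' is false as written. The same issue affects your treatment of the zero of $K_{\circ-}(\alpha_1,z_2)$ at $z_2=\sqrt[\rightarrow]{k_2^2-\alpha_1^2}\in\UHP$: it need not lie above $\Im(z_2)=\varepsilon$, so it may well be hit in your version.

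Once you correct the contour heights, all of your analyticity checks become valid: $\alpha_1\in\LHP$ lies strictly below the strip $0<\Im(z_1)<\varepsilon$, $\alpha_2\in\UHP$ lies strictly above the strip $-\varepsilon<\Im(z_2)<0$, and both the zero and pole of $K_{\circ-}(\alpha_1,\cdot)$ sit in $\UHP$ and are avoided. The argument then coincides with the paper's.
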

	
	\begin{proof}[Sketch of proof]
		For $\aalpha = (\alpha^{\star}_1, \alpha^{\star}_2) \in \LHP \times \UHP$, the integrand has no singularities in the domain \[\{0<\Im(z_1)<\varepsilon\} \times \{-\varepsilon<\Im(z_2)<0\}\] as can be seen from the properties of $\sqrt[\rightarrow]{z}$ (cf.\!\! Lemma \ref{lem:KappaProperties} and Figure \ref{fig:ContourDeform1st}). Due to the asymptotic behaviour of $\Psi_{++}$, the boundary terms `at infinity' vanish, and therefore an application of Stokes' theorem proves the lemma. The corresponding `contour deformation' is illustrated in Figure \ref{fig:ContourDeform1st}. 
		{\hfill{}}
	\end{proof}
	
	\begin{figure}[!h]
		\centering
		\includegraphics[width=.75\textwidth]{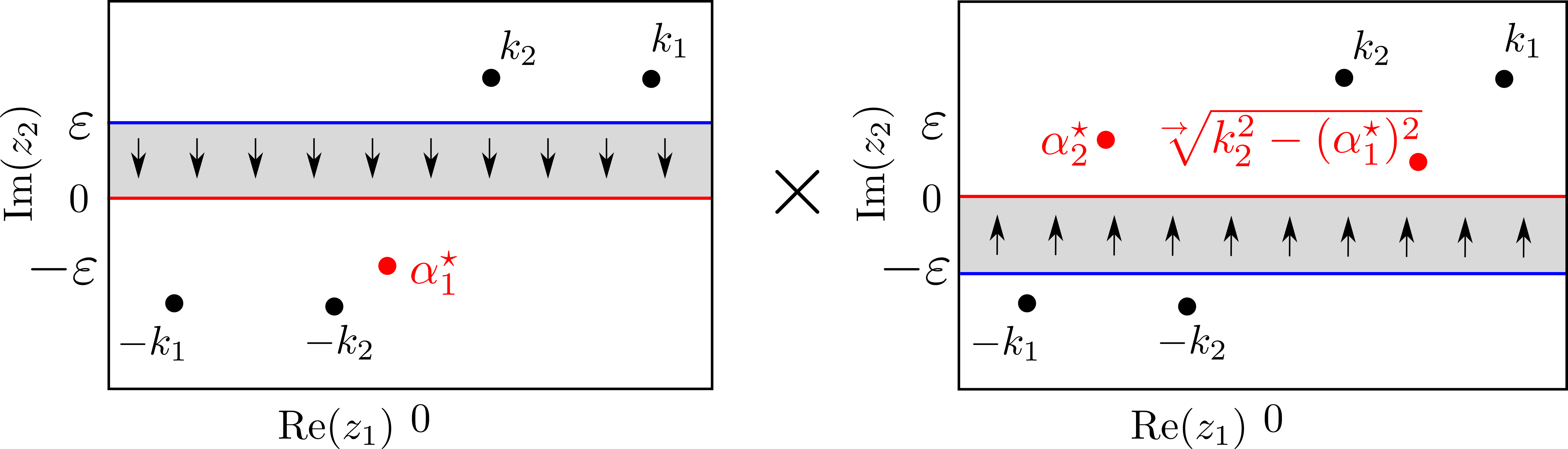}
		\caption{Domain $\{0<\Im(z_1)<\varepsilon\} \times \{-\varepsilon <\Im(z_2)<0\}$. The branch  and polar singularities of the integrand in \eqref{eq:JDefinition01} are shown in black and red, respectively.}
		\label{fig:ContourDeform1st}
	\end{figure}
	
	Henceforth, until specified otherwise, $J$ denotes the function given by \eqref{eq:JDefinition}.
	
	\begin{lemma}\label{lem:JFirstCont}
		$J$ is analytic in $H^- \times \mathrm{UHP}$.
	\end{lemma}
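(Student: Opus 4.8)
The plan is to fix $\alpha_2 \in \mathrm{UHP}$ and prove analyticity in $\alpha_1$ on $H^-$ by a direct inspection of the integral representation \eqref{eq:JDefinition}, treating $J$ as a function of $\alpha_1$ defined by a parameter integral over $\mathbb{R}^2$ in $(z_1,z_2)$. The integrand is
\[
\frac{K(z_1,z_2)\,\Psi_{++}(z_1,z_2)}{K_{\circ -}(\alpha_1,z_2)(z_2-\alpha_2)(z_1-\alpha_1)},
\]
so the three potential sources of non-analyticity in $\alpha_1$ are: the pole at $z_1 = \alpha_1$, the factor $(z_2-\alpha_2)^{-1}$, and the factor $K_{\circ -}(\alpha_1,z_2)^{-1}$. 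Since $z_1$ is integrated over $\mathbb{R}$, the pole $z_1=\alpha_1$ stays off the contour of integration precisely when $\Im(\alpha_1)\neq 0$; as $\alpha_1 \in H^- \subset \mathrm{LHP}$ we have $\Im(\alpha_1)<0$, so the contour is never pinched and the $z_1$-integral depends analytically on $\alpha_1$ (this is the standard "integral of a Cauchy kernel over a line is analytic off the line" argument, combined with the $\mathcal{O}(1/|z_1|)$ decay of $\Psi_{++}$ from \eqref{Psi++Behaviour1}–\eqref{Psi++Behaviour2} guaranteeing absolute convergence). Since $\alpha_2 \in \mathrm{UHP}$ and $z_2\in\mathbb{R}$, the factor $(z_2-\alpha_2)^{-1}$ is bounded and holomorphic, so it causes no trouble.

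The remaining — and main — obstacle is the factor $K_{\circ -}(\alpha_1,z_2)^{-1}$, i.e.\ we must ensure that $K_{\circ -}(\alpha_1,z_2)$ is analytic and non-vanishing in $\alpha_1$ for all $z_2 \in \mathbb{R}$ and all $\alpha_1 \in H^-$. Recall
\[
K_{\circ -}(\alpha_1,\alpha_2) = \frac{\sqrt[\rightarrow]{k^2_2-\alpha_1^2} - \alpha_2}{\sqrt[\rightarrow]{k^2_1-\alpha_1^2} - \alpha_2}.
\]
Analyticity in $\alpha_1$ requires $\alpha_1$ to avoid the branch sets of $\sqrt[\rightarrow]{k_j^2-\alpha_1^2}$, $j=1,2$; by the definition of $H^-$ these branch points/cuts in the $\alpha_1$-plane are exactly $h^-_1, h^-_2$ (together with $h^+_1,h^+_2$), which are removed in passing from $\mathrm{LHP}$ to $H^-$. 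So analyticity of the numerator and denominator follows once we know $\alpha_1 \in H^-$. Non-vanishing of the denominator amounts to $\sqrt[\rightarrow]{k_1^2-\alpha_1^2} \neq \alpha_2$ for real $\alpha_2$; but by Lemma \ref{lem:KappaProperties}, since $\alpha_1 \in H^- \subset \mathbb{C}\setminus(h^-_1\cup h^-_2\cup h^+_1\cup h^+_2)$ we have $\sqrt[\rightarrow]{k_1^2-\alpha_1^2}\in H^+ \subset \mathrm{UHP}$, which has strictly positive imaginary part, hence it cannot equal the real number $\alpha_2$. The same argument shows the numerator is non-zero, which is not needed for analyticity of $J$ but is reassuring. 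Thus $K_{\circ-}(\alpha_1,z_2)^{-1}$ is holomorphic and uniformly bounded on compact subsets of $H^-$, uniformly in $z_2\in\mathbb{R}$.

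Putting these together: on any compact subset of $H^-\times\mathrm{UHP}$ the integrand is holomorphic in $(\alpha_1,\alpha_2)$ and dominated by an integrable function of $(z_1,z_2)$ (using the $\mathcal{O}(1/(|z_1||z_2|))$ decay of $\Psi_{++}$, the boundedness of $K$ on $\mathbb{R}^2$, the bound on $K_{\circ-}^{-1}$, and the fact that $|z_1-\alpha_1|$ and $|z_2-\alpha_2|$ are bounded below). Hence by Morera's theorem applied in each variable (or Lebesgue dominated convergence to differentiate under the integral sign), $J$ is holomorphic in $H^-\times\mathrm{UHP}$. I expect the write-up to mirror the analogous step in \cite{AssierShanin2019} closely; the only genuinely problem-specific point is the verification that the pole set of $K_{\circ-}^{-1}$ in the $\alpha_1$-plane is contained in $h^-_1\cup h^-_2$, which is exactly what removing these curves to form $H^-$ is designed to handle, and which is underwritten by Lemma \ref{lem:KappaProperties}.
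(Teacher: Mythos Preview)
Your proposal is correct and follows essentially the same route as the paper: check that for each fixed $(z_1,z_2)\in\mathbb{R}^2$ the integrand is holomorphic in $(\alpha_1,\alpha_2)\in H^-\times\mathrm{UHP}$ (with Lemma~\ref{lem:KappaProperties} handling the $K_{\circ-}^{-1}$ factor), then pass to analyticity of $J$ via Fubini, Morera in each variable, and Hartogs. One small slip: the condition actually needed for analyticity of $1/K_{\circ-}(\alpha_1,z_2)=\dfrac{\sqrt[\rightarrow]{k_1^2-\alpha_1^2}-z_2}{\sqrt[\rightarrow]{k_2^2-\alpha_1^2}-z_2}$ is that its \emph{denominator} $\sqrt[\rightarrow]{k_2^2-\alpha_1^2}-z_2$ not vanish (with $k_2$ and the integration variable $z_2$, not $k_1$ and $\alpha_2$); what you labelled ``reassuring'' is in fact the essential check, but since you verify both cases via Lemma~\ref{lem:KappaProperties} the argument is unaffected.
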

	
	\begin{proof}
		For  any $\z \in \mathbb{R}^2$, we know that the expression \[\frac{K(z_1,z_2) \Psi_{++}(z_1,z_2)}{(z_2-\alpha_2)(z_1-\alpha_1)}\] is analytic in $H^- \times \mathrm{UHP}$ as a function of $\aalpha$ since $\alpha_1$ and $\alpha_2$ are never real (by definition of $\UHP$ and $H^-$, cf.\! Section \ref{subsec:1/4and3/4based} and \ref{subsec:DomainsForCont}), hence the polar factors pose no problem. Let us now investigate 
		\[\frac{1}{K_{\circ -}(\alpha_1,z_2)} = \frac{\sqrt[\rightarrow]{k^2_1-\alpha_1^2} -z_2}{\sqrt[\rightarrow]{k^2_2-\alpha_1^2} -z_2}.\]
		For $\alpha_1 \in H^- $ we know (by Lemma \ref{lem:KappaProperties}) that $\sqrt[\rightarrow]{k^2_2 -\alpha_1^2} \notin \mathbb{R}$.
		Hence, $1/K_{\circ -}(\alpha_1,z_2)$ is analytic in $H^- \times \mathbb{R}$. Let now $\Delta \subset H^-$ be any triangle. Then, using Fubini's theorem (which is possible due to the asymptotic behaviour \eqref{Psi++Behaviour1}--\eqref{Psi++Behaviour2}) we find 
		\begin{align}
			\int_{\partial \Delta} J(w_1,\alpha_2) d w_1 & = 
			\int_{-\infty}^{\infty} \int_{-\infty}^{\infty} \int_{\partial \Delta} \frac{K(z_1,z_2) \Psi_{++}(z_1,z_2)}{K_{\circ -}(w_1,z_2)(z_2-\alpha_2)(z_1-w_1)} dw_1 dz_1 dz_2. 
		\end{align}
		But since the integrand is holomorphic we know, by Cauchy's theorem (\cite{Wegert2012} Theorem 4.2.31), that 
		\begin{align}
			\int_{\partial \Delta} \frac{K(z_1,z_2) \Psi_{++}(z_1,z_2)}{K_{\circ -}(w_1,z_2)(z_2-\alpha_2)(z_1-w_1)} dw_1  =0
		\end{align}	
		and therefore, by Morera's theorem (\cite{Wegert2012} Theorem 4.2.22), we find that $J$ is holomorphic in the first coordinate. Similarly, we find that $J$ is holomorphic in the second coordinate (for $\alpha_2 \in \mathrm{UHP}$) and thus, by Hartogs' theorem (\cite{Shabat1991} Chapter 1, Section 2), we have proved analyticity in $H^- \times \mathrm{UHP}$.	
		{\hfill{}}
	\end{proof}
	
	We now want to investigate the behaviour of $J$ on the boundary of $H^- \times \UHP$. As in Lemma \ref{lem:JStokes}  it can be shown that 
	\begin{align}
		J(\aalpha) = \int_{-\infty}^{\infty } \int_{-\infty + i b_0}^{\infty + i b_0} \frac{K(z_1,z_2) \Psi_{++}(z_1,z_2)}{K_{\circ -}(\alpha_1,z_2)(z_2-\alpha_2)(z_1-\alpha_1)} dz_1 dz_2
	\end{align}
	and that for sufficiently small $ b_0 \in  (0, \varepsilon)$, for all $\z \in (- \infty + i b_0, \infty + i b_0) \times \mathbb{R}$ the integrand \[(K(z_1,z_2) \Psi_{++}(z_1,z_2))/(K_{\circ -}(\alpha_1,z_2)(z_2-\alpha_2)(z_1-\alpha_1))\] is analytic, as a function of $\aalpha$, in a sufficiently small neighbourhood of any fixed $(\alpha^{\star}_1,\alpha^{\star}_2) \in \mathbb{R} \times \UHP$. Just as in the proof of Lemma \ref{lem:JFirstCont}, this yields:
	
	\begin{lemma}\label{lem:JFirstBoundaryCont}
		$J$ can be analytically continued onto $\mathbb{R} \times \mathrm{UHP}$.
	\end{lemma}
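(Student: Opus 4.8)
The plan is to reuse the proof of Lemma~\ref{lem:JFirstCont} essentially verbatim, the only new ingredient being the preliminary contour shift announced in the paragraph above: we push the inner ($z_1$-)line of integration slightly into the upper half-plane so that the pole at $z_1=\alpha_1$ no longer obstructs $\alpha_1$ from reaching the real axis. First I would record, exactly as in Lemma~\ref{lem:JStokes}, that for every $\aalpha\in H^-\times\UHP$ and every sufficiently small $b_0\in(0,\varepsilon)$ one has
\[
J(\aalpha)=\int_{-\infty}^{\infty}\int_{-\infty+ib_0}^{\infty+ib_0}\frac{K(z_1,z_2)\,\Psi_{++}(z_1,z_2)}{K_{\circ-}(\alpha_1,z_2)\,(z_2-\alpha_2)\,(z_1-\alpha_1)}\,dz_1\,dz_2 .
\]
Indeed, lifting the $z_1$-line from $\mathbb{R}$ up to $\mathbb{R}+ib_0$ with $z_2$ held on the real axis sweeps the strip $0<\Im(z_1)<b_0$, on which the integrand is holomorphic in $\z$: the factor $1/(z_1-\alpha_1)$ stays regular because $\alpha_1\in H^-\subset\LHP$; $\Psi_{++}$ is analytic on $\UHP(-\varepsilon)\times\UHP(-\varepsilon)$; and, by Lemma~\ref{lem:KappaProperties} and the choice of branch of $\sqrt[\rightarrow]{\,\cdot\,}$, the only poles of $K(z_1,z_2)$ with $\Im(z_1)>0$ (for $z_2\in\mathbb{R}$) lie at $z_1=\sqrt[\rightarrow]{k^2_j-z^2_2}\in h^+_j$, which stay a fixed positive distance above $\mathbb{R}$ uniformly in $z_2\in\mathbb{R}$ (this is where $\Im(k_{1,2})>0$ enters). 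Choosing $b_0$ below that distance --- and below $\varepsilon$, so as to remain inside the strip already cleared in Lemma~\ref{lem:JStokes} --- the strip is free of singularities, and the contributions at $\pm\infty$ vanish by \eqref{Psi++Behaviour1}--\eqref{Psi++Behaviour2}.

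Next I would fix an arbitrary $(\alpha^{\star}_1,\alpha^{\star}_2)\in\mathbb{R}\times\UHP$ and check that, along the shifted surface $(\mathbb{R}+ib_0)\times\mathbb{R}$, the integrand is analytic in $\aalpha$ throughout a small polydisc $V$ centred at $(\alpha^{\star}_1,\alpha^{\star}_2)$. The pole $z_1=\alpha_1$ now lies strictly below the $z_1$-contour, provided $V$ has radius $<b_0$; the pole $z_2=\alpha_2$ is harmless since $z_2\in\mathbb{R}$ while $\alpha_2\in\UHP$; and $1/K_{\circ-}(\alpha_1,z_2)=(\sqrt[\rightarrow]{k^2_1-\alpha^2_1}-z_2)/(\sqrt[\rightarrow]{k^2_2-\alpha^2_1}-z_2)$ is analytic in $\alpha_1$ in a neighbourhood of $\mathbb{R}$ for every real $z_2$: the radical $\sqrt[\rightarrow]{k^2_j-\alpha^2_1}$ has no branch point near $\mathbb{R}$ (its branch points $\pm k_j$ are non-real, and $k^2_j-\alpha^2_1\notin(0,\infty)$ for real $\alpha_1$ because $\Im(k^2_j)>0$), and the denominator $\sqrt[\rightarrow]{k^2_2-\alpha^2_1}-z_2$ vanishes only at $z_2=\sqrt[\rightarrow]{k^2_2-\alpha^2_1}\in h^+_2$, which is non-real and hence never meets the real $z_2$-contour. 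With these points in hand the remainder is the Morera--Hartogs packaging of Lemma~\ref{lem:JFirstCont} copied word for word: apply Fubini (licit by \eqref{Psi++Behaviour1}--\eqref{Psi++Behaviour2}) to pull an integral of $J$ over the boundary of a triangle $\Delta\subset V$ inside the $\z$-integral, apply Cauchy's theorem to the holomorphic integrand, and deduce via Morera's and then Hartogs' theorem that $J$ is holomorphic on $V$. Letting $(\alpha^{\star}_1,\alpha^{\star}_2)$ vary over $\mathbb{R}\times\UHP$ yields an analytic function on an open set containing $\mathbb{R}\times\UHP$ which, by the first step, agrees with $J$ on $H^-\times\UHP$; this is the desired continuation.

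The only genuinely non-mechanical step, I expect, is the first one --- certifying that the upward shift of the $z_1$-contour crosses no singularity --- which boils down to a uniform-in-$z_2$ lower bound on the height of the curves $h^+_1,h^+_2$ above the real axis together with the constraint $b_0<\varepsilon$; this is elementary but is precisely where the standing hypothesis $\Im(k_1)=\Im(k_2)>0$ is used. Everything downstream is a transcription of the proof of Lemma~\ref{lem:JFirstCont}.
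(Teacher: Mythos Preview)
Your proposal is correct and follows exactly the route the paper sketches in the paragraph preceding the lemma: shift the $z_1$-contour from $\mathbb{R}$ to $\mathbb{R}+ib_0$ for small $b_0\in(0,\varepsilon)$, observe that on the shifted surface the integrand is analytic in $\aalpha$ near any point of $\mathbb{R}\times\UHP$, and then invoke the Morera--Hartogs argument of Lemma~\ref{lem:JFirstCont}. Your write-up supplies more justification than the paper (the uniform lower bound on $\Im(\sqrt[\rightarrow]{k_j^2-z_2^2})$ over $z_2\in\mathbb{R}$, the explicit check that $1/K_{\circ-}(\alpha_1,z_2)$ is regular for $\alpha_1$ near $\mathbb{R}$), but the architecture is identical.
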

	
	Similarly (again, see \cite{AssierShanin2019} for the technical details involved):
	
	\begin{lemma}\label{lem:JSecondBoundaryCont}
		$J$ can be analytically continued onto the other boundary components $H^- \times \mathbb{R}$, $(P\setminus \{-k_1,-k_2\}) \times \UHP$, and $J$ is continuous on $P \times \mathbb{R}$. 
	\end{lemma}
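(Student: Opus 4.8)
The plan is to treat each of the three boundary pieces by the same mechanism already used in Lemmas~\ref{lem:JFirstCont} and~\ref{lem:JFirstBoundaryCont}: deform the surface of integration in \eqref{eq:JDefinition} so that it steers clear of the polar and branch loci of the integrand which obstruct letting $\aalpha$ reach the boundary; check, using Lemma~\ref{lem:KappaProperties}, the known location of the curves $h^{\pm}_{j}$, and the analyticity of $\Psi_{++}$ on $\UHP(-\varepsilon)\times\UHP(-\varepsilon)$, that no singularity is swept through during the deformation; and then re-run the Fubini/Morera/Hartogs argument of Lemma~\ref{lem:JFirstCont} on the deformed integral to get analyticity. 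For the last assertion, continuity on $P\times\mathbb{R}$, the final step is instead an appeal to dominated convergence, with the growth bounds \eqref{Psi++Behaviour1}--\eqref{Psi++Behaviour2} supplying an $\aalpha$-uniform integrable majorant near that edge.

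First I would treat $H^{-}\times\mathbb{R}$. The only factor in the integrand of \eqref{eq:JDefinition} that prevents $\alpha_{2}$ from reaching $\mathbb{R}$ is the Cauchy kernel $1/(z_{2}-\alpha_{2})$. Fixing $b_{0}\in(0,\varepsilon)$, I would push the $z_{2}$-contour down to the line $\{\Im z_{2}=-b_{0}\}$. This is legitimate because $\Psi_{++}(z_{1},z_{2})$ is analytic for $\Im z_{2}>-\varepsilon$, because the pole $z_{2}=\sqrt[\rightarrow]{k_{2}^{2}-\alpha_{1}^{2}}$ of $1/K_{\circ-}(\alpha_{1},z_{2})$ lies in $H^{+}\subset\UHP$ for $\alpha_{1}\in H^{-}$ by Lemma~\ref{lem:KappaProperties} and so is not crossed, and because the branch locus $k_{1}^{2}-z_{1}^{2}-z_{2}^{2}=0$ of $K$ avoids the slab $-b_{0}\le\Im z_{2}\le 0$, $z_{1}\in\mathbb{R}$, once $b_{0}$ is small enough (recall $\Im(k_{j}^{2})>0$). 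On the deformed integral, $\aalpha\mapsto 1/(z_{2}-\alpha_{2})$ is holomorphic for $\Im\alpha_{2}>-b_{0}$ uniformly in $z_{2}$ on the contour, so the Morera-plus-Hartogs reasoning of Lemma~\ref{lem:JFirstCont} shows that $J$ extends analytically to $H^{-}\times\{\Im\alpha_{2}>-b_{0}\}\supset H^{-}\times\mathbb{R}$.

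Next, for $(P\setminus\{-k_{1},-k_{2}\})\times\UHP$, the obstruction is the pole $z_{2}=\sqrt[\rightarrow]{k_{2}^{2}-\alpha_{1}^{2}}$ of $1/K_{\circ-}(\alpha_{1},z_{2})$: it sits strictly in $H^{+}$ while $\alpha_{1}\in H^{-}$, but it descends towards the real $z_{2}$-axis as $\alpha_{1}\to h_{2}^{-}$ (the zero $z_{2}=\sqrt[\rightarrow]{k_{1}^{2}-\alpha_{1}^{2}}$ does likewise near $h_{1}^{-}$ but, sitting in the numerator, is harmless). I would deform the $z_{2}$-contour so as to bulge away from this point before $\alpha_{1}$ reaches $h_{2}^{-}$, which is admissible since $\Psi_{++}$ extends below $\mathbb{R}$ in $z_{2}$ and $\alpha_{2}\in\UHP$ keeps $z_{2}=\alpha_{2}$ off the contour. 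Since $h_{2}^{-}$ is a branch curve of $\alpha_{1}\mapsto\sqrt[\rightarrow]{k_{2}^{2}-\alpha_{1}^{2}}$, the two sides of $h_{2}^{-}$ send this pole to two different placements relative to the real $z_{2}$-axis, forcing two different admissible deformations of the contour and hence two distinct limiting integrals --- precisely the two boundary values of $J$ encoded by the two-sided contour $P_{2}$; this is the sense in which $J$ continues onto $P\setminus\{-k_{1},-k_{2}\}$. The endpoints $-k_{1},-k_{2}$ are excluded because there $\sqrt[\rightarrow]{k_{j}^{2}-\alpha_{1}^{2}}=0$ and one must invoke the small-circle regularisation of the branch points mentioned after the definition of $P_{1},P_{2}$. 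Continuity of $J$ on $P\times\mathbb{R}$ then follows by performing both deformations simultaneously --- the $z_{2}$-contour dipping below $\mathbb{R}$ to dodge both $z_{2}=\alpha_{2}$ and $z_{2}=\sqrt[\rightarrow]{k_{2}^{2}-\alpha_{1}^{2}}$ --- so that the integrand depends continuously on $\aalpha$ up to that edge while staying dominated by the majorant from \eqref{Psi++Behaviour1}--\eqref{Psi++Behaviour2}; dominated convergence gives the claim, noting that $J$ need only be continuous, not analytic, on $P\times\mathbb{R}$.

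I expect the main difficulty to be the bookkeeping of which deformation avoids which singularity. Unlike the quarter-plane problem of \cite{AssierShanin2019}, the kernel $K$ here carries both wavenumbers, so one must track the two branch curves $h^{\pm}_{1}$ and $h^{\pm}_{2}$ at once and verify that the pole $\sqrt[\rightarrow]{k_{2}^{2}-\alpha_{1}^{2}}$ stays on the intended side of the deformed $z_{2}$-contour throughout each of the three arguments. It is precisely this geometry of $H^{\pm}$ --- and nothing else --- that distinguishes the proof from its counterpart in \cite{AssierShanin2019}, on which I would rely for the remaining routine verifications.
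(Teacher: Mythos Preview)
Your proposal is correct and follows essentially the same approach as the paper. The paper does not give a proof of this lemma at all---it simply writes ``Similarly (again, see \cite{AssierShanin2019} for the technical details involved)''---so you have spelled out the contour-deformation/Morera/Hartogs mechanism that the paper only gestures at, and your identification of which singularity drives each of the three cases (the Cauchy pole $z_2=\alpha_2$ for $H^-\times\mathbb{R}$, the pole $z_2=\sqrt[\rightarrow]{k_2^2-\alpha_1^2}$ of $1/K_{\circ-}$ for $(P\setminus\{-k_1,-k_2\})\times\UHP$, and both at once for $P\times\mathbb{R}$) matches the intended argument. One small point worth making explicit: for the continuation onto $P_1\setminus\{-k_1\}$ the relevant discontinuity of $J$ across $h_1^-$ comes solely from the branch of the \emph{numerator} $\sqrt[\rightarrow]{k_1^2-\alpha_1^2}$ in $1/K_{\circ-}$, so no additional $z_2$-deformation is needed there---the integral converges on both sides and simply inherits two distinct boundary values; your remark that this zero is ``harmless'' is right as far as integrability goes, but it is still the source of the two-sidedness on $P_1$.
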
	
	
	Let us discuss the remaining terms involved in \eqref{eq.PsiContFormula1}, and recall that $\mathfrak{a}_{1,2}$ are given by \eqref{eq.a_1,2Definition}. By definition of $H^-$, we find that the external terms  $1/K_{\circ +}$ and $1/K_{\circ -}(\alpha_1,\mathfrak{a}_2)$   are  analytic in $H^- \times \mathrm{UHP}$. $P_{++}$ however, has a simple pole at $\mathfrak{a}_1$ and is therefore only analytic in $H^- \setminus \{\mathfrak{a}_1\} \times \mathrm{UHP}$. Analyticity of these terms on the boundary elements $\mathbb{R} \times \mathrm{UHP}, \ H^- \setminus \{\mathfrak{a}_1\} \times \mathbb{R}, \ P\setminus \{-k_{1},-k_2\} \times \mathrm{UHP}, \ (P \setminus \{-k_{1},-k_2\} \times \mathbb{R}) \setminus \{(\alpha_1, -\sqrt[\rightarrow]{k^2_2-\alpha_1^2})| \ \alpha_1 \in P_2 \}$  follows by definition of these sets and the properties of $\sqrt[\rightarrow]{z}$. Observe that we have to exclude  $\{(\alpha_1, -\sqrt[\rightarrow]{k^2_2-\alpha_1^2})| \ \alpha_1 \in P_2 \}$ from  $P\setminus \{-k_{1},-k_2\} \times \mathbb{R}$ since it is a polar singularity of the external factor $1/K_{\circ +}$. 
	This is different from the quarter-plane problem. To summarise:
	
	\begin{corollary}
		The $1/4$-based spectral function $\Psi_{++}$ satisfying the penetrable wedge functional problem \ref{def:FirstFunctionalFormulation} can be analytically continued onto $H^- \setminus \{\mathfrak{a}_1\} \times \mathrm{UHP}$. It can moreover be analytically continued onto the boundary elements $\mathbb{R} \times \mathrm{UHP}, \ H^- \setminus \{\mathfrak{a}_1\} \times \mathbb{R}, \ P\setminus \{-k_{1},-k_2\} \times \mathrm{UHP}$, and continuously continued onto $(P \times \mathbb{R}) \setminus \{(\alpha_1, -\sqrt[\rightarrow]{k^2_2-\alpha_1^2})| \ \alpha_1 \in P_2 \}$.
	\end{corollary}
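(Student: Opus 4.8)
The plan is to read the Corollary straight off formula \eqref{eq.PsiContFormula1}, using the analyticity and continuation properties of the integral term $J$ established in Lemmas \ref{lem:JStokes}–\ref{lem:JSecondBoundaryCont} together with an elementary inspection of the three explicit factors in that formula. Writing $J$ for the function in \eqref{eq:JDefinition}, Lemma \ref{lem:JStokes} lets us rewrite \eqref{eq.PsiContFormula1}, on the non-empty open overlap $(\S\times\S)\cap(\LHP\times\UHP)$, as
\[
\Psi_{++}(\aalpha)=\frac{J(\aalpha)}{4\pi^2 K_{\circ+}(\aalpha)}-\frac{P_{++}(\aalpha)}{K_{\circ-}(\alpha_1,\mathfrak{a}_2)\,K_{\circ+}(\aalpha)}.
\]
Since $\Psi_{++}$ is already known to be analytic in $\UHP(-\varepsilon)\times\UHP(-\varepsilon)$, and the right-hand side will be shown to be analytic on a region overlapping $\S\times\S$, the identity theorem forces the right-hand side to be the analytic continuation of $\Psi_{++}$ wherever it is analytic (and the continuous extension wherever it is merely continuous). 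It therefore suffices to determine the domain of analyticity, and of continuity up to the boundary, of that right-hand side.

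First I would collect the domain of $J$: by Lemma \ref{lem:JFirstCont} it is analytic on $H^-\times\UHP$, by Lemmas \ref{lem:JFirstBoundaryCont}–\ref{lem:JSecondBoundaryCont} it continues analytically onto $\mathbb{R}\times\UHP$, $H^-\times\mathbb{R}$ and $(P\setminus\{-k_1,-k_2\})\times\UHP$, and continuously onto $P\times\mathbb{R}$. Next I would locate the singularities of the explicit factors. The factor $1/K_{\circ+}(\aalpha)=(\sqrt[\rightarrow]{k_1^2-\alpha_1^2}+\alpha_2)/(\sqrt[\rightarrow]{k_2^2-\alpha_1^2}+\alpha_2)$ is analytic wherever $\sqrt[\rightarrow]{k_2^2-\alpha_1^2}+\alpha_2\neq0$; by Lemma \ref{lem:KappaProperties}, for $\alpha_1\in H^-$ one has $\sqrt[\rightarrow]{k_2^2-\alpha_1^2}\in H^+\subset\UHP$, so the denominator never vanishes on $H^-\times(\UHP\cup\mathbb{R})$, and the same non-vanishing persists when $\alpha_1$ lies on $P$ and $\alpha_2\in\UHP$; by the properties of $\sqrt[\rightarrow]{\,\cdot\,}$ and the definition of $P_2$, a zero of that denominator with $\alpha_1\in P$ can occur only when $\sqrt[\rightarrow]{k_2^2-\alpha_1^2}$ is real, i.e. precisely for $\alpha_1\in P_2$ together with $\alpha_2=-\sqrt[\rightarrow]{k_2^2-\alpha_1^2}\in\mathbb{R}$, which is exactly why $\{(\alpha_1,-\sqrt[\rightarrow]{k_2^2-\alpha_1^2}):\alpha_1\in P_2\}$ must be removed from $P\times\mathbb{R}$. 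The factor $1/K_{\circ-}(\alpha_1,\mathfrak{a}_2)$ contributes no new singularity on any of the stated sets, since $\Im(\mathfrak{a}_2)\le-\delta<0$ while $\sqrt[\rightarrow]{k_2^2-\alpha_1^2}$ sits in $H^+$ (or on its boundary) there. Finally $P_{++}(\aalpha)=1/((\alpha_1-\mathfrak{a}_1)(\alpha_2-\mathfrak{a}_2))$ is analytic except at $\alpha_1=\mathfrak{a}_1$ — the pole $\alpha_2=\mathfrak{a}_2$ lies in $\LHP$, hence off $\UHP\cup\mathbb{R}$ — which is why $\mathfrak{a}_1$ must be excised from $H^-$.

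Then I would simply intersect the domains: the product of $J$, $1/K_{\circ+}$, $1/K_{\circ-}(\cdot,\mathfrak{a}_2)$ and $P_{++}$ is (jointly) analytic on $(H^-\setminus\{\mathfrak{a}_1\})\times\UHP$, each factor being already jointly analytic on its own domain so that no further appeal to Hartogs is needed; it continues analytically onto the boundary elements $\mathbb{R}\times\UHP$, $(H^-\setminus\{\mathfrak{a}_1\})\times\mathbb{R}$ and $(P\setminus\{-k_1,-k_2\})\times\UHP$, and continuously onto $(P\times\mathbb{R})\setminus\{(\alpha_1,-\sqrt[\rightarrow]{k_2^2-\alpha_1^2}):\alpha_1\in P_2\}$; agreement with $\Psi_{++}$ on $\S\times\S$ is exactly \eqref{eq.PsiContFormula1}. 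I do not anticipate a genuine obstacle: all the analytic content is already packaged in the lemmas about $J$, and the only thing requiring care is the correct bookkeeping of the two sets that must be removed — the single point $\mathfrak{a}_1$ coming from $P_{++}$, and the polar curve $\{(\alpha_1,-\sqrt[\rightarrow]{k_2^2-\alpha_1^2}):\alpha_1\in P_2\}$ coming from $1/K_{\circ+}$ — these being precisely the features that distinguish the penetrable wedge from the quarter-plane case flagged just before the statement.
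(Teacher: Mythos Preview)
Your proposal is correct and follows essentially the same approach as the paper: read the continuation of $\Psi_{++}$ directly off formula \eqref{eq.PsiContFormula1}, using Lemmas \ref{lem:JStokes}--\ref{lem:JSecondBoundaryCont} for the integral term $J$ and an elementary inspection of the external factors $1/K_{\circ+}$, $1/K_{\circ-}(\alpha_1,\mathfrak{a}_2)$ and $P_{++}$ to locate the pole at $\mathfrak{a}_1$ and the polar curve on $P_2\times\mathbb{R}$. Your write-up is, if anything, a bit more explicit than the paper's discussion preceding the corollary, but the substance is identical.
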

	
	Repeating the above procedure but using \eqref{eq.PsiContFormula2} instead (again, see \cite{AssierShanin2019} for the technical details involved), we obtain:
	
	\begin{corollary}
		$\Psi_{++}$ can be analytically continued onto $ \mathrm{UHP} \times H^- \setminus \{\mathfrak{a}_2\}$ and onto the boundary elements $\mathrm{UHP} \times \mathbb{R}, \ \mathbb{R} \times H^- \setminus \{\mathfrak{a}_2\},  \mathrm{UHP} \times P\setminus \{-k_{1},-k_2\}$, and continuously continued onto $(\mathbb{R} \times P) \setminus \{( -\sqrt[\rightarrow]{k^2_2-\alpha_2^2}, \alpha_2)| \ \alpha_2 \in P_2 \}$.
	\end{corollary}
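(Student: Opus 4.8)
The statement is the mirror-image counterpart of the preceding corollary, obtained by interchanging the roles of the two complex variables. The natural plan is therefore to repeat the argument of Section~\ref{subsec:FirstStep} verbatim, but starting from the second analytical continuation formula \eqref{eq.PsiContFormula2} instead of \eqref{eq.PsiContFormula1}. Concretely, I would first introduce the integral term
\begin{align*}
	\tilde{J}(\aalpha) = \int_{-\infty + i \varepsilon}^{\infty + i \varepsilon} \int_{-\infty - i \varepsilon}^{\infty - i \varepsilon} \frac{K(z_1,z_2) \Psi_{++}(z_1,z_2)}{K_{-\circ }(z_1,\alpha_2)(z_2-\alpha_2)(z_1-\alpha_1)} dz_1 dz_2
\end{align*}
appearing in \eqref{eq.PsiContFormula2}, and establish the analogue of Lemma~\ref{lem:JStokes}: for $\aalpha \in \UHP \times \LHP$ the integrand has no singularities in the slab $\{-\varepsilon<\Im(z_1)<0\}\times\{0<\Im(z_2)<\varepsilon\}$ (by the properties of $\sqrt[\rightarrow]{z}$ and Lemma~\ref{lem:KappaProperties}), the boundary terms at infinity vanish by the spectral edge conditions \eqref{Psi++Behaviour1}--\eqref{Psi++Behaviour2}, so Stokes' theorem collapses $\tilde J$ to the integral over $\mathbb{R}^2$.

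Next I would prove the analogue of Lemma~\ref{lem:JFirstCont}, namely that $\tilde J$ is analytic in $\UHP \times H^-$: for fixed real $\z$ the polar factors cause no trouble since $\alpha_1,\alpha_2$ are never real on $\UHP\times H^-$, and $1/K_{-\circ}(z_1,\alpha_2) = (\sqrt[\rightarrow]{k_1^2-\alpha_2^2}-z_1)/(\sqrt[\rightarrow]{k_2^2-\alpha_2^2}-z_1)$ is analytic in $\mathbb{R}\times H^-$ because $\alpha_2\in H^-$ forces $\sqrt[\rightarrow]{k_2^2-\alpha_2^2}\notin\mathbb{R}$ by Lemma~\ref{lem:KappaProperties}; then Fubini (justified by \eqref{Psi++Behaviour1}--\eqref{Psi++Behaviour2}), Cauchy, Morera, and Hartogs give joint analyticity exactly as before. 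The boundary-continuation statements (onto $\UHP\times\mathbb{R}$, $\mathbb{R}\times H^-$, $\UHP\times(P\setminus\{-k_1,-k_2\})$, and continuity on $\mathbb{R}\times P$) follow by the same contour-shift-to-$ib_0$ argument used for Lemmas~\ref{lem:JFirstBoundaryCont}--\ref{lem:JSecondBoundaryCont}.

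Finally I would handle the external (explicit) factors in \eqref{eq.PsiContFormula2}: $1/K_{+\circ}(\aalpha)$ and $1/K_{-\circ}(\mathfrak{a}_1,\alpha_2)$ are analytic on $\UHP\times H^-$ and on the listed boundary pieces by the definition of $H^-$ and the properties of $\sqrt[\rightarrow]{z}$, while $P_{++}$ contributes the single pole at $\alpha_2=\mathfrak{a}_2$, which is why the domain reads $\UHP\times(H^-\setminus\{\mathfrak{a}_2\})$. Exactly as in the remark preceding the first corollary, the set $\{(-\sqrt[\rightarrow]{k_2^2-\alpha_2^2},\alpha_2)\mid \alpha_2\in P_2\}$ must be removed from $\mathbb{R}\times P$ because it is a polar singularity of the external factor $1/K_{+\circ}$. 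Assembling these pieces through \eqref{eq.PsiContFormula2} yields the claimed continuation. I do not expect any genuine obstacle here: the only thing to be careful about is the bookkeeping of which half-plane each variable lives in (the roles of $z_1$ and $z_2$, and of $\varepsilon$ versus $-\varepsilon$, are swapped relative to Section~\ref{subsec:FirstStep}), and correctly tracking which excluded set comes from the pole of the relevant external kernel factor --- everything else is a transcription of the already-proven one-variable-at-a-time argument.
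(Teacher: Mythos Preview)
Your proposal is correct and follows exactly the approach indicated in the paper, which simply states that the corollary is obtained by ``repeating the above procedure but using \eqref{eq.PsiContFormula2} instead.'' You have faithfully carried out that symmetry argument with the correct swaps of variables, contours, and external factors, so nothing further is needed.
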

	
	Therefore:
	
	\begin{theorem}\label{thm:Psi++Domains}
		$\Psi_{++}$ can be analytically continued onto  
		\[
		\left(\UHP \times \mathbb{C} \setminus \left(h^-_1 \cup h^-_2 \cup \{\mathfrak{a}_1\}\right) \right) \cup \left( \mathbb{C} \setminus \left(h^-_1 \cup h^-_2 \cup \{\mathfrak{a}_2\}\right) \times \UHP\right),
		\]
		as shown in Figure \ref{fig:Domain++}, and $\Psi_{++}$ is analytic on this domain's boundary except on the distinct boundary $(P \times \mathbb{R}) \cup (\mathbb{R} \times P)$ on which $\Psi_{++}$ is continuous everywhere except on the curves given by $\{(\alpha_1, -\sqrt[\rightarrow]{k^2_2 - \alpha^2_1})| \alpha_1 \in P_2\}$  and $\{( -\sqrt[\rightarrow]{k^2_2 - \alpha^2_2}, \alpha_2)| \alpha_2 \in P_2\}$ which yield polar singularities.
	\end{theorem}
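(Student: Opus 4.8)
The plan is to obtain the theorem by simply \emph{gluing} the two corollaries just derived — the one coming from formula \eqref{eq.PsiContFormula1} with $\alpha_1$ as the continued variable, and the one coming from \eqref{eq.PsiContFormula2} with $\alpha_2$ as the continued variable — together with the a priori analyticity of $\Psi_{++}$ on $\UHP(-\varepsilon)\times\UHP(-\varepsilon)$ furnished by condition 2 of Definition \ref{def:FirstFunctionalFormulation}. Each corollary already delivers analyticity of $\Psi_{++}$ on one of the two product domains appearing in the displayed union: the half-plane factor there is exactly the a priori $\UHP$, while the other factor is $\UHP$ enlarged first to $\mathbb{R}$ (the boundary clause of the corollary) and then to $H^-$, i.e.\ to $\mathbb{C}$ minus the two branch curves $h^-_{1,2}$ and minus the single pole of $P_{++}$ that lies in the complementary variable — the latter deletion being the only one that survives because $\mathfrak{a}_{1,2}$ and $h^-_{1,2}$ all lie in $\LHP$, hence are invisible in the $\UHP$ slot. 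So essentially the only work left is the consistency of the two resulting holomorphic functions on their overlap.

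For the consistency check, I would note that the intersection of the two product domains is contained in $\UHP\times\UHP$ — indeed, since $h^-_{1,2}$ and the points $\mathfrak{a}_{1,2}$ all lie in the open lower half-plane, the intersection is precisely $\UHP\times\UHP$ — and $\UHP\times\UHP\subset\UHP(-\varepsilon)\times\UHP(-\varepsilon)$, where $\Psi_{++}$ is single-valued and holomorphic to begin with. Both continuations restrict to that same function there. Since each product domain is connected (a product of connected sets: $\mathbb{C}$ with two arcs and a point removed is still connected), the identity theorem for holomorphic functions in $\mathbb{C}^2$ (cf.\ \cite{Shabat1991}) forces the two continuations to agree on the whole intersection. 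Hence they patch to a single holomorphic function on the union, which is the asserted analytic continuation.

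The boundary picture is then obtained by collecting the boundary clauses of the two corollaries. Analyticity persists across $\mathbb{R}\times\UHP$, $H^-\times\mathbb{R}$, $(P\setminus\{-k_1,-k_2\})\times\UHP$ and their transposes, which exhausts the boundary of the union \emph{except} for the distinguished boundary $(P\times\mathbb{R})\cup(\mathbb{R}\times P)$; on that set $P_1,P_2$ record approach from either side of the genuine branch sets $h^-_1,h^-_2$ of $K$, so only continuity (not analyticity) survives, by the same limiting argument as in Lemmas \ref{lem:JFirstBoundaryCont}--\ref{lem:JSecondBoundaryCont}. Finally, on this distinguished boundary even continuity fails along $\{(\alpha_1,-\sqrt[\rightarrow]{k^2_2-\alpha_1^2}): \alpha_1\in P_2\}$ and its mirror $\{(-\sqrt[\rightarrow]{k^2_2-\alpha_2^2},\alpha_2): \alpha_2\in P_2\}$, because on those loci the \emph{external} factors $1/K_{\circ+}$, respectively $1/K_{+\circ}$, in \eqref{eq.PsiContFormula1}--\eqref{eq.PsiContFormula2} develop poles — exactly the feature absent from the quarter-plane problem of \cite{AssierShanin2019}.

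I do not expect a serious obstacle: the statement is a repackaging of the two corollaries. The two points needing a little care are (i) the consistency step, which is painless once one observes that the overlap sits inside the a priori domain, so it reduces to the $\mathbb{C}^2$ identity theorem; and (ii) the bookkeeping on the distinguished boundary, where the polar loci must be attributed to the external factors $1/K_{\circ+}$ and $1/K_{+\circ}$ rather than to the integral terms, which are themselves merely continuous — and never singular — along $P\times\mathbb{R}$ and $\mathbb{R}\times P$.
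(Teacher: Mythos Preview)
Your proposal is correct and follows the paper's approach: the paper simply writes ``Therefore:'' before the theorem, treating it as the direct union of the two preceding corollaries together with the a priori analyticity of $\Psi_{++}$ on $\UHP(-\varepsilon)\times\UHP(-\varepsilon)$. Your explicit consistency check on the overlap $\UHP\times\UHP$ via the identity theorem is a detail the paper leaves implicit, but otherwise the argument is the same bookkeeping of the corollaries' interior and boundary clauses.
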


	\begin{figure}[h]
		\includegraphics[width=\textwidth]{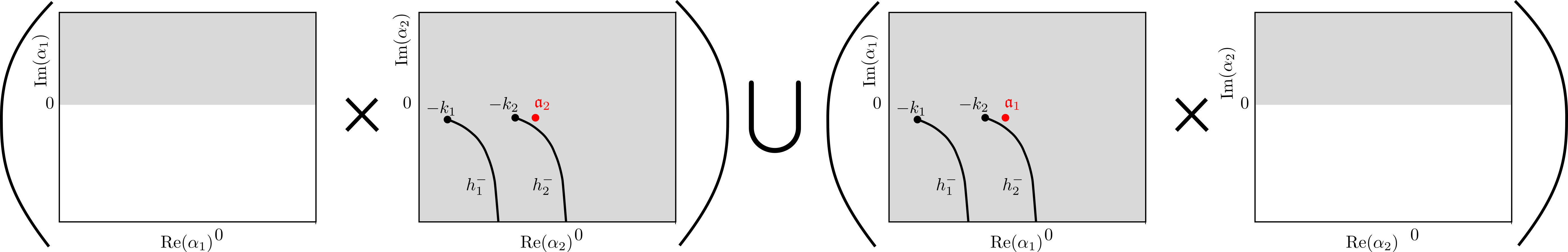}
		\caption{Domain of analyticity of $\Psi_{++}$. Polar singularities $\alpha_2 \equiv \mathfrak{a}_2$ and $\alpha_1 \equiv \mathfrak{a}_1$ are shown in red whereas the branch lines $h^-_1$ and $h^-_2$ are shown in black.}
		\label{fig:Domain++}
	\end{figure}
	
	Naturally, we ask whether a formula can be found for $\Psi_{++}$ in the `missing' parts of Figure \ref{fig:Domain++} that is, whether we can find a formula for $\Psi_{++}$ in $\LHP \times \LHP$. Moreover, from \cite{AssierShanin2019}, we \Blue{anticipate} that the study of $\Psi_{++}$ in $\LHP \times \LHP$ is directly linked to finding a criterion for 3/4-basedness of $\Phi_{3/4}$. This will be the topic of the following sections.
	
	\subsection{\normalsize  Second step of analytical continuation: Analyticity properties of $\Phi_{3/4}$}\label{subsec:SecondStep}
	
	\begin{lemma}\label{lemma:formulaPsi01SecondStep} \
		Let $\varepsilon>b_0>0$, for $\varepsilon$ as in \eqref{eq.VarepsilonDef}. Then	$\Psi_{++}$ satisfies
		\begin{align*}
			\Psi_{++}=& \frac{1}{4 \pi^2 K_{\circ+}} \int_{P} \int_{\mathbb{R}-ib_0} \frac{K(z_1,z_2) \Psi_{++}(z_1,z_2)}{K_{\circ-}(\alpha_1,z_2)(z_2-\alpha_2)(z_1-\alpha_1)} dz_1 dz_2 \\
			& - \frac{K_{-\circ}(\alpha_1,\mathfrak{a}_2)}{K_{\circ+} K_{\circ-}(\alpha_1,\mathfrak{a}_2)K_{-\circ}(\mathfrak{a}_1,\mathfrak{a}_2) (\alpha_1 -\mathfrak{a}_1)(\alpha_2-\mathfrak{a}_2)} \numberthis \label{eq.PsiCont}
		\end{align*}
		where the RHS of \eqref{eq.PsiCont} is defined on $H^-(-b_0) \times \mathrm{UHP}$.	
	\end{lemma}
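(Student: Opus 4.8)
The plan is to obtain \eqref{eq.PsiCont} from \eqref{eq.PsiContFormula1} by two successive contour deformations, in the spirit of the second step of the analysis in \cite{AssierShanin2019}. I would begin with \eqref{eq.PsiContFormula1} written, via Lemma \ref{lem:JStokes}, with its integral term in the form $J(\aalpha)=\iint_{\mathbb{R}^{2}}\ldots$, valid for $\aalpha\in\LHP\times\UHP$. The goal is to move the inner contour down from $\mathbb{R}$ onto $\mathbb{R}-ib_{0}$ and the outer contour down from $\mathbb{R}$ onto $P$, i.e.\ to press it tightly against the branch sets $h_{1}^{-},h_{2}^{-}$ along which $\Psi_{++}$ is cut (cf.\ Theorem \ref{thm:Psi++Domains}). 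The order of the two moves matters: the $z_{1}$-contour has to be taken off the real line first, because for real $z_{1}$ the polar singularities of the kernel $K$ in the $z_{2}$-variable lie exactly on $h_{1}^{-}$, while for $z_{1}$ off $\mathbb{R}$ they are detached from it by Lemma \ref{lem:KappaProperties}, which is what makes the subsequent wrapping of the $z_{2}$-contour possible.

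Concretely, I would first lower the $z_{1}$-contour through the strip $\{-b_{0}<\Im z_{1}<0\}$. There the integrand is analytic in $z_{1}$: the kernel poles have imaginary part at least $\varepsilon>b_{0}$ in modulus, $\Psi_{++}$ is analytic because $\UHP(-2\varepsilon)\times\UHP(-2\varepsilon)$ contains a neighbourhood of $\mathbb{R}^{2}$, and the pole $z_{1}=\alpha_{1}$ is avoided once $\Im\alpha_{1}<-b_{0}$; by the spectral edge conditions of Section \ref{thm:AsymptoticSpectral} the contributions from $\Re z_{1}=\pm\infty$ vanish, so Cauchy's theorem applies. Next I would press the $z_{2}$-contour downward onto $P$ using the analyticity of $\Psi_{++}$ established in Theorem \ref{thm:Psi++Domains} together with the detached kernel poles; again the arcs at $\Im z_{2}=-\infty$ contribute nothing by the decay estimates, and the poles of $K$ crossed along the way contribute nothing (there the integrand is proportional to $\Psi_{++}$ evaluated on the singular set of $K$, where $\Psi_{++}$ vanishes by \eqref{eq.WienerHopf}). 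The one singularity that does get enclosed is the simple pole at $z_{2}=\mathfrak{a}_{2}$ inherited from $P_{++}$ (recall $\Im\mathfrak{a}_{2}\le-\delta<0$). Computing $\mathrm{Res}_{z_{2}=\mathfrak{a}_{2}}$ of the integrand — using \eqref{eq.WienerHopf} to express the residue of $\Psi_{++}$ at $\mathfrak{a}_{2}$ through $P_{++}$, the kernel factors, and the residue of $\Phi_{3/4}(z_{1},\cdot)$ there — and then carrying out the remaining one-dimensional $z_{1}$-integral along $\mathbb{R}-ib_{0}$ explicitly by residues produces a closed-form correction. Adding it to the free term $-P_{++}/(K_{\circ-}(\alpha_{1},\mathfrak{a}_{2})K_{\circ+})$ of \eqref{eq.PsiContFormula1} and simplifying with the factorisation identities of Section \ref{subsec:SomeUsefulFunctions} turns it into the free term $-K_{-\circ}(\alpha_{1},\mathfrak{a}_{2})P_{++}/(K_{\circ+}K_{\circ-}(\alpha_{1},\mathfrak{a}_{2})K_{-\circ}(\mathfrak{a}_{1},\mathfrak{a}_{2}))$ of \eqref{eq.PsiCont}.

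It then remains to verify that every term on the right-hand side of \eqref{eq.PsiCont} is analytic on $H^{-}(-b_{0})\times\UHP$, so that the identity, established so far only on a subdomain of $\S\times\S$, extends to all of $H^{-}(-b_{0})\times\UHP$ by uniqueness of analytic continuation. Analyticity of the double integral follows exactly as in Lemma \ref{lem:JFirstCont} (Fubini, Morera and Hartogs, with Lemma \ref{lem:KappaProperties} keeping $1/K_{\circ-}(\alpha_{1},z_{2})$ regular for $z_{2}\in P$ and $\alpha_{1}\in H^{-}(-b_{0})$), while analyticity of the free term reduces to observing that the zeros of $K_{\circ+}$ and of $K_{-\circ}(\cdot\,,\mathfrak{a}_{2})$, and the relevant poles of the $K$-factors, all lie in $\UHP$ (again Lemma \ref{lem:KappaProperties}), hence clear of $H^{-}(-b_{0})$. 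I expect the main obstacle to be precisely the residue bookkeeping of the second deformation — confirming that $\mathfrak{a}_{2}$ is genuinely enclosed (and not shielded by the branch sets) while the kernel-pole contributions cancel, and then reorganising the resulting $z_{1}$-integral into the compact closed-form term of \eqref{eq.PsiCont} — together with keeping careful track of which region of $\aalpha$ the successive formulae are valid on.
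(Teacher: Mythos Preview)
Your approach is essentially the same as the paper's: lower the $z_1$-contour to $\mathbb{R}-ib_0$, then wrap the $z_2$-contour onto $P$, picking up only the residue at $z_2=\mathfrak{a}_2$. The one place where you diverge is in how that residue is computed. You propose to use the Wiener--Hopf equation \eqref{eq.WienerHopf} and invoke ``the residue of $\Phi_{3/4}(z_1,\cdot)$'' at $\mathfrak{a}_2$; but \eqref{eq.WienerHopf} is only established on $\S\times\S$, which does not contain $\mathfrak{a}_2$, and the residue of $\Phi_{3/4}$ there is not yet known at this point in the argument (it is only obtained later, in Theorem~\ref{PhiCont}, which itself relies on the present lemma). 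The paper instead uses the companion continuation formula \eqref{eq.PsiContFormula2} --- valid for $z_1\in\mathbb{R}-ib_0\subset\UHP(-\varepsilon)$ and $z_2$ near $\mathfrak{a}_2\in H^-$ --- and observes that only its external additive term contributes to $\res{z_2=\mathfrak{a}_2}$. The remaining one-dimensional $z_1$-integral is then recognised as the minus part of a Cauchy sum-split of an explicit function, computable by pole removal, which yields the free term of \eqref{eq.PsiCont} directly. The same formula \eqref{eq.PsiContFormula2} is also what cleanly justifies that $K\Psi_{++}$ is regular at the kernel poles swept during the $z_2$-deformation (your argument via \eqref{eq.WienerHopf} has the same domain-of-validity issue there). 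So your skeleton is correct, but replace the appeals to \eqref{eq.WienerHopf} by appeals to \eqref{eq.PsiContFormula2}, and frame the residual $z_1$-integral as a sum-split/pole-removal rather than a raw residue computation.
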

	
	Our proof is slightly different from \cite{AssierShanin2019}, as the integral in \eqref{eq.PsiCont} is not improper.
	
	\begin{proof}
		Again, we first focus on 
		\begin{align*}
			J(\aalpha) = \int\hspace{-.2cm}\int_{\mathbb{R}^2} \frac{K(z_1,z_2)\Psi_{++}(z_1,z_2)}{K_{\circ-}(\alpha_1,z_2)(z_2-\alpha_2)(z_1-\alpha_1)} dz_1 dz_2.
		\end{align*}
		Change the $z_1$ contour from $\mathbb{R}$ to $\mathbb{R}-ib_0$, which will not hit any singularities of the integrand and therefore
		\begin{align}
			J(\aalpha) = \int_{\mathbb{R}} \int_{\mathbb{R}-ib_0} \frac{K(z_1,z_2)\Psi_{++}(z_1,z_2)}{K_{\circ-}(\alpha_1,z_2)(z_2-\alpha_2)(z_1-\alpha_1)} dz_1 dz_2.
		\end{align}	
		Now, change the $z_2$ contour from $\mathbb{R}$ to $P$. This will only hit the singularity of the integrand at $z_2 = \mathfrak{a}_2$ and therefore, this picks up a residue of the integrand at $z_2 = \mathfrak{a}_2$ (relative to clockwise orientation). Indeed,  $K\Psi_{++}$ has no singularities on $(\mathbb{R} - i b_0) \times H^-$ and no singularities on $(\mathbb{R} - i b_0) \times P$, as can be seen from formula \eqref{eq.PsiContFormula2}. 
		Therefore, we obtain 
		\begin{align*}
			J(\aalpha) =& \int_{P} \int_{\mathbb{R}-ib_0} \frac{K(z_1,z_2)\Psi_{++}(z_1,z_2)}{K_{\circ-}(\alpha_1,z_2)(z_2-\alpha_2)(z_1-\alpha_1)} dz_1 dz_2  \\
			&-
			2 \pi i \int_{\mathbb{R}-ib_0}\res{z_2 = \mathfrak{a}_2} \left(\frac{K(z_1,z_2)\Psi_{++}(z_1,z_2)}{K_{\circ-}(\alpha_1,z_2)(z_2-\alpha_2)(z_1-\alpha_1)}\right) dz_1. \numberthis
		\end{align*}
		The remainder of the proof is identical to \cite{AssierShanin2019}; that is, compute the residue using formula \eqref{eq.PsiContFormula2} where only the external additive term in \eqref{eq.PsiContFormula2} contributes to the residue, and identify it as the minus-part of a Cauchy sum-split which can be explicitly computed by pole-removal. Use the resulting formula for $J$ in \eqref{eq.PsiContFormula1} to  obtain \eqref{eq.PsiCont}.
		{\hfill{}}
	\end{proof}
	
	Similarly, changing the roles of \eqref{eq.PsiContFormula1} and \eqref{eq.PsiContFormula2} in the previous proof, we find:
	\begin{lemma}
		Let $\varepsilon>b_0>0$. Then $\Psi_{++}$ satisfies
		\begin{align*}
			\Psi_{++}=& \frac{1}{4 \pi^2 K_{+ \circ}} \int_{\mathbb{R}-ib_0} \int_{P} \frac{K(z_1,z_2) \Psi_{++}(z_1,z_2)}{K_{-\circ}(z_1,\alpha_2)(z_2-\alpha_2)(z_1-\alpha_1)} dz_1 dz_2 \\
			& - \frac{K_{\circ-}(\mathfrak{a}_1,\alpha_2)}{K_{+\circ} K_{-\circ}(\mathfrak{a}_1,\alpha_2)K_{\circ-}(\mathfrak{a}_1,\mathfrak{a}_2) (\alpha_1 -\mathfrak{a}_1)(\alpha_2-\mathfrak{a}_2)} \numberthis \label{eq.PsiCont2}
		\end{align*}
		where the RHS of \eqref{eq.PsiCont2} is defined for  $\aalpha \in \mathrm{UHP} \times H^-(-b_0)$.
	\end{lemma}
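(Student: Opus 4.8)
This statement is the mirror image of Lemma~\ref{lemma:formulaPsi01SecondStep} under the exchange of the two complex variables (equivalently, under $\alpha_1\leftrightarrow\alpha_2$, $z_1\leftrightarrow z_2$, and \eqref{eq.PsiContFormula1}$\leftrightarrow$\eqref{eq.PsiContFormula2}), so the plan is to run the proof of Lemma~\ref{lemma:formulaPsi01SecondStep} verbatim with the variables swapped throughout. Concretely, I would start from \eqref{eq.PsiContFormula2} and apply the $\alpha_1\leftrightarrow\alpha_2$ analogue of Lemma~\ref{lem:JStokes}: for $\aalpha\in\UHP\times\LHP$ the integrand of \eqref{eq.PsiContFormula2} has no singularities in the slab $\{-\varepsilon<\Im(z_1)<0\}\times\{0<\Im(z_2)<\varepsilon\}$ (by Lemma~\ref{lem:KappaProperties} and the properties of $\sqrt[\rightarrow]{z}$) and the boundary terms at infinity vanish thanks to the spectral edge conditions of Section~\ref{thm:AsymptoticSpectral}, so Stokes' theorem collapses the shifted double integral onto $\mathbb{R}^2$, giving the integral
\[
\tilde{J}(\aalpha)=\iint_{\mathbb{R}^2}\frac{K(z_1,z_2)\Psi_{++}(z_1,z_2)}{K_{-\circ}(z_1,\alpha_2)(z_2-\alpha_2)(z_1-\alpha_1)}\,dz_1\,dz_2.
\]

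I would then deform the contours one variable at a time. First shift the $z_2$-contour from $\mathbb{R}$ down to $\mathbb{R}-ib_0$ with $0<b_0<\varepsilon$: since $\Im(\mathfrak{a}_2)\le-\delta<-b_0$ (with $\delta$ as in \eqref{eq.deltadef}) and the branch curves $h^-_{1,2}$ lie strictly below $\mathbb{R}-ib_0$ (by the properties of $\sqrt[\rightarrow]{z}$), no singularity of the integrand is crossed. Then deform the $z_1$-contour from $\mathbb{R}$ onto $P$. Exactly as in the proof of Lemma~\ref{lemma:formulaPsi01SecondStep}, formula \eqref{eq.PsiContFormula1} shows that the only singularity of the integrand met during this deformation is the simple pole at $z_1=\mathfrak{a}_1$, which originates, via the Wiener--Hopf equation \eqref{eq.WienerHopf}, from the factor $P_{++}$ hidden inside $K\Psi_{++}$ (the would-be branch singularities of $K\Psi_{++}=-(\Phi_{3/4}+P_{++})$ do not obstruct the deformation, and $K\Psi_{++}$ is analytic on $P$). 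Accounting for that pole with the clockwise orientation of $P$ yields
\[
\tilde{J}(\aalpha)=\int_{\mathbb{R}-ib_0}\!\int_{P}\frac{K(z_1,z_2)\Psi_{++}(z_1,z_2)}{K_{-\circ}(z_1,\alpha_2)(z_2-\alpha_2)(z_1-\alpha_1)}\,dz_1\,dz_2-2\pi i\int_{\mathbb{R}-ib_0}\res{z_1=\mathfrak{a}_1}\!\left(\frac{K(z_1,z_2)\Psi_{++}(z_1,z_2)}{K_{-\circ}(z_1,\alpha_2)(z_2-\alpha_2)(z_1-\alpha_1)}\right)dz_2.
\]

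To finish, I would evaluate the residue using \eqref{eq.PsiContFormula1}: only its external additive term is singular at $z_1=\mathfrak{a}_1$, so $\res{z_1=\mathfrak{a}_1}\Psi_{++}(z_1,z_2)$ is an explicit rational function of $z_2$, and $\res{z_1=\mathfrak{a}_1}(K(z_1,z_2)\Psi_{++}(z_1,z_2))$ simplifies using the factorisation $K(\mathfrak{a}_1,z_2)=K_{\circ+}(\mathfrak{a}_1,z_2)K_{\circ-}(\mathfrak{a}_1,z_2)$. The remaining one-dimensional integral over $\mathbb{R}-ib_0$ is then recognised, exactly as in \cite{AssierShanin2019} and in the proof of Lemma~\ref{lemma:formulaPsi01SecondStep}, as the minus-part of a Cauchy sum-split in the variable $\alpha_2$, which is evaluated in closed form by pole-removal; substituting the resulting expression for $\tilde{J}$ back into \eqref{eq.PsiContFormula2} — the pole-removal producing precisely the term that combines with the external additive term of \eqref{eq.PsiContFormula2} to give the single external term of \eqref{eq.PsiCont2} — yields \eqref{eq.PsiCont2}. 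The stated domain of validity $\aalpha\in\UHP\times H^-(-b_0)$ then follows by the Fubini/Morera/Hartogs argument of Lemmas~\ref{lem:JFirstCont}--\ref{lem:JSecondBoundaryCont} with the two variables interchanged, applied to the double integral in \eqref{eq.PsiCont2}, together with inspection of the poles of the explicit external term.

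I expect the main obstacle to be the bookkeeping in this final residue and pole-removal step: one must check carefully that $K\Psi_{++}$ is free of singularities other than the pole at $z_1=\mathfrak{a}_1$ throughout the region swept as $z_1$ is moved from $\mathbb{R}$ to $P$ (so that this is the unique residue picked up), and that the Cauchy minus-split of the leftover one-dimensional integral collapses to exactly the rational external term displayed in \eqref{eq.PsiCont2}. The contour deformations and their control at infinity, by contrast, are routine consequences of the spectral edge conditions, and the closing analyticity statement is essentially a transcription of earlier lemmas with the variables permuted.
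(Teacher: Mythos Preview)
Your proposal is correct and follows essentially the same approach as the paper. The paper itself dispatches this lemma in one line---``changing the roles of \eqref{eq.PsiContFormula1} and \eqref{eq.PsiContFormula2} in the previous proof''---and your proposal spells out precisely that swap: start from \eqref{eq.PsiContFormula2}, shift the $z_2$-contour down, deform the $z_1$-contour onto $P$ picking up the pole at $z_1=\mathfrak{a}_1$ (justified via \eqref{eq.PsiContFormula1}), compute the residue from the external additive term of \eqref{eq.PsiContFormula1}, and evaluate the remaining one-dimensional integral by pole-removal before substituting back into \eqref{eq.PsiContFormula2}.
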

	
	We are now ready to prove this section's main result.
	
	\begin{theorem}\label{PhiCont}
		The function $\Phi= K \Psi_{++}$ can be analytically continued onto $\left(H^-(\varepsilon) \setminus \{\mathfrak{a}_1\}\right) \times \left(H^-(\varepsilon) \setminus \{\mathfrak{a}_2\}\right)$, for $\mathfrak{a}_{1,2}$ given by \eqref{eq.a_1,2Definition} and $\varepsilon$ as in \eqref{eq.VarepsilonDef}, and $\Phi$ is continuous on $P \times P$. The residues of $\Phi$ near the polar singularities $\alpha_1\equiv\mathfrak{a}_1$ and $\alpha_2\equiv\mathfrak{a}_2$ are given by
		\begin{align}
			\res{\alpha_1=\mathfrak{a}_1}{\Phi} &=  - \frac{K_{\circ-}(\mathfrak{a}_1,\alpha_2) }{ K_{\circ-}(\mathfrak{a}_1,\mathfrak{a}_2)(\alpha_2-\mathfrak{a}_2)}, \label{eq.PhiRes1}\\
			\res{\alpha_2=\mathfrak{a}_2}{\Phi} &=  - \frac{K_{-\circ}(\alpha_1,\mathfrak{a}_2)}{ K_{-\circ}(\mathfrak{a}_1,\mathfrak{a}_2)(\alpha_1 -\mathfrak{a}_1)}. \label{eq.PhiRes2}
		\end{align}
	\end{theorem}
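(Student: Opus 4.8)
The plan is to use the two integral representations \eqref{eq.PsiCont} and \eqref{eq.PsiCont2} for $\Psi_{++}$, multiply through by the kernel $K$, and read off the analyticity domains of $\Phi = K\Psi_{++}$ from the (now much more benign) combined expression. The key observation is that whereas $\Psi_{++}$ only continues to a union of two "slab" domains (Theorem~\ref{thm:Psi++Domains}), the product $K\Psi_{++}$ should continue to the \emph{intersection-type} domain $\left(H^-(\varepsilon)\setminus\{\mathfrak{a}_1\}\right) \times \left(H^-(\varepsilon)\setminus\{\mathfrak{a}_2\}\right)$, because the factors of $1/K_{\circ+}$ and $1/K_{-\circ}$ appearing in the external terms of \eqref{eq.PsiCont}--\eqref{eq.PsiCont2} get cancelled once we multiply by $K = K_{\circ+}K_{\circ-} = K_{+\circ}K_{-\circ}$.

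First I would take \eqref{eq.PsiCont}, valid a priori on $H^-(-b_0)\times\UHP$, multiply both sides by $K(\aalpha)$, and use $K = K_{\circ+}K_{\circ-}$ to write
\begin{align*}
	\Phi(\aalpha) =& \frac{K_{\circ-}(\aalpha)}{4\pi^2} \int_{P}\int_{\mathbb{R}-ib_0} \frac{K(z_1,z_2)\Psi_{++}(z_1,z_2)}{K_{\circ-}(\alpha_1,z_2)(z_2-\alpha_2)(z_1-\alpha_1)}\,dz_1\,dz_2 \\
	& - \frac{K_{\circ-}(\alpha_1,\alpha_2)K_{-\circ}(\alpha_1,\mathfrak{a}_2)}{K_{\circ-}(\alpha_1,\mathfrak{a}_2)K_{-\circ}(\mathfrak{a}_1,\mathfrak{a}_2)(\alpha_1-\mathfrak{a}_1)(\alpha_2-\mathfrak{a}_2)}.
\end{align*}
Then I would analyse each term. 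For the external (rational/algebraic) term, the factor $K_{\circ-}(\aalpha)$ is analytic and nonzero on $H^-(\varepsilon)\times H^-(\varepsilon)$ by the analyticity properties of $K_{\circ-}$ from Section~\ref{subsec:SomeUsefulFunctions} together with Lemma~\ref{lem:KappaProperties}, $K_{-\circ}(\alpha_1,\mathfrak{a}_2)$ is analytic in $\alpha_1\in H^-(\varepsilon)$, and $1/K_{\circ-}(\alpha_1,\mathfrak{a}_2)$ is analytic in $\alpha_1\in H^-(\varepsilon)$ (it has no pole there since its zero is on $h^+$); the poles at $\alpha_1=\mathfrak{a}_1$ and $\alpha_2=\mathfrak{a}_2$ are exactly the two excluded points. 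For the integral term, the prefactor $K_{\circ-}(\aalpha)$ and the factor $1/K_{\circ-}(\alpha_1,z_2)$ in the integrand need to be analysed jointly: arguing as in Lemmas~\ref{lem:JFirstCont}--\ref{lem:JSecondBoundaryCont}, with $z_2$ ranging over $P$ (so that $z_2$ stays off $h^+$ and the denominator $K_{\circ-}(\alpha_1,z_2) = (\sqrt[\rightarrow]{k_2^2-\alpha_1^2}-z_2)^{-1}(\cdots)$ causes no trouble), one continues the integral in $\alpha_1$ down to all of $H^-(\varepsilon)$ and in $\alpha_2$ — now no longer obstructed by $K_{\circ+}$ in a denominator — down to $H^-(\varepsilon)$ as well, giving joint analyticity on $H^-(\varepsilon)\times H^-(\varepsilon)$ by Morera plus Hartogs as before, and continuity on $P\times P$. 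Running the symmetric argument with \eqref{eq.PsiCont2} and $K = K_{+\circ}K_{-\circ}$ gives the continuation with the roles of the variables swapped; since the two representations agree on the overlap, together they establish analyticity of $\Phi$ on $\left(H^-(\varepsilon)\setminus\{\mathfrak{a}_1\}\right)\times\left(H^-(\varepsilon)\setminus\{\mathfrak{a}_2\}\right)$.

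For the residue formulae, I would simply extract the $\alpha_1=\mathfrak{a}_1$ pole from the external term above: the integral term is regular there, so
\[
	\res{\alpha_1=\mathfrak{a}_1}{\Phi} = - \frac{K_{\circ-}(\mathfrak{a}_1,\alpha_2)K_{-\circ}(\mathfrak{a}_1,\mathfrak{a}_2)}{K_{\circ-}(\mathfrak{a}_1,\mathfrak{a}_2)K_{-\circ}(\mathfrak{a}_1,\mathfrak{a}_2)(\alpha_2-\mathfrak{a}_2)} = - \frac{K_{\circ-}(\mathfrak{a}_1,\alpha_2)}{K_{\circ-}(\mathfrak{a}_1,\mathfrak{a}_2)(\alpha_2-\mathfrak{a}_2)},
\]
which is \eqref{eq.PhiRes1}; likewise \eqref{eq.PhiRes2} follows by taking the $\alpha_2=\mathfrak{a}_2$ residue of the analogous expression obtained from \eqref{eq.PsiCont2}. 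The main obstacle I expect is not the bookkeeping of the rational factors but rather the careful justification that the prefactor $K_{\circ-}(\aalpha)$ genuinely cancels the denominator singularity $1/K_{\circ-}(\alpha_1,z_2)$ uniformly in the contour deformations — i.e., tracking that as $\alpha_1$ is pushed into $H^-(\varepsilon)$ and $z_2$ runs along $P$, the quantity $\sqrt[\rightarrow]{k_2^2-\alpha_1^2}$ stays away from the $z_2$-contour so that no spurious pole is created in the integrand, and that the boundary terms at infinity still vanish under the $\mathcal{O}(1/|\alpha_i|)$ estimates; this is exactly the place where the structure of $H^{\pm}$ and Lemma~\ref{lem:KappaProperties} do the real work, and where our setting differs from \cite{AssierShanin2019} only through the explicit form of $K$.
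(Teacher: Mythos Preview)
Your overall strategy---multiply \eqref{eq.PsiCont} by $K=K_{\circ+}K_{\circ-}$ to cancel the $1/K_{\circ+}$ prefactor, run Morera/Hartogs on the result, then read off the residues from the explicit external term---is exactly what the paper does, and your analyticity bookkeeping and residue computation are correct.

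There is, however, a genuine gap in your treatment of continuity on $P\times P$. You flag the ``main obstacle'' as controlling $1/K_{\circ-}(\alpha_1,z_2)$ and keeping $\sqrt[\rightarrow]{k_2^2-\alpha_1^2}$ off the $z_2$-contour, but that factor is in fact harmless there. The real obstruction is the Cauchy kernel $(z_2-\alpha_2)^{-1}$: once the $z_2$-contour has been deformed to $P$, letting $\alpha_2$ approach a point of $h^-_1\cup h^-_2$ drives $\alpha_2$ onto the integration contour, and the integral is not a priori continuous there. The paper resolves this by locally deforming $P$ to a contour $P'$ enclosing both $\alpha_2$ and $h^-_1\cup h^-_2$, which picks up the residue
\[
\frac{K(z_1,\alpha_2)\,\Psi_{++}(z_1,\alpha_2)}{K_{\circ-}(\alpha_1,\alpha_2)\,(z_1-\alpha_1)},
\]
and then invokes the \emph{other} continuation formula \eqref{eq.PsiContFormula2} to see that this residue is itself continuous as $\alpha_2\to\alpha_2^\star\in h^-_1\cup h^-_2$. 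Referencing Lemmas~\ref{lem:JFirstCont}--\ref{lem:JSecondBoundaryCont} does not cover this, since in those lemmas the $z_2$-contour is $\mathbb{R}$, not $P$; without the $P\to P'$ deformation step your continuity claim on $P\times P$ is unsupported.

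A smaller point: formula \eqref{eq.PsiCont} alone only gives $\alpha_1\in H^-(-b_0)$, not $H^-(\varepsilon)$, because the $z_1$-contour sits at height $-b_0$. The union of the domains obtained from \eqref{eq.PsiCont} and \eqref{eq.PsiCont2} still misses the region where both $\Im(\alpha_1)$ and $\Im(\alpha_2)$ lie in $(-b_0,\varepsilon)$; the paper fills this gap by observing separately that $\Phi=K\Psi_{++}$ is analytic on $\S\times\S$.
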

	
	\begin{proof}
		Due to \eqref{eq.PsiCont} we can, for $\boldsymbol{\alpha} \in H^-(-b_0) \times \S$, write
		\begin{align*}
			\Phi=& \frac{K_{\circ-}}{4 \pi^2 } \int_{P} \int_{\mathbb{R}-ib_0} \frac{K(z_1,z_2) \Psi_{++}(z_1,z_2)}{K_{\circ-}(\alpha_1,z_2)(z_2-\alpha_2)(z_1-\alpha_1)} dz_1 dz_2 \\
			& -  \frac{K_{\circ-} K_{-\circ}(\alpha_1,\mathfrak{a}_2)}{ K_{\circ-}(\alpha_1,\mathfrak{a}_2)K_{-\circ}(\mathfrak{a}_1,\mathfrak{a}_2) (\alpha_1 -\mathfrak{a}_1)(\alpha_2-\mathfrak{a}_2)}. \numberthis  \label{eq.PhiCont1st}
		\end{align*}	
		As before, using Hartogs' and Morera's theorems we find that $\Phi$ is analytic in $(H^-(-b_0)\setminus \{\mathfrak{a}_1\}) \times (H^-(\varepsilon) \setminus \{\mathfrak{a}_2\})$. Similarly, using \eqref{eq.PsiCont2}, we find analyticity of $\Phi$ in 
		$(H^-(\varepsilon)\setminus \{\mathfrak{a}_1\}) \times (H^-(-b_0) \setminus \{\mathfrak{a}_2\})$.  As $\Psi_{++}$ and $K$ are analytic in $\S \times \S$ so is $\Phi$ and therefore we find analyticity of $\Phi$ in $\left(H^-(\varepsilon) \setminus \{\mathfrak{a}_1\}\right) \times \left(H^-(\varepsilon) \setminus \{\mathfrak{a}_2\} \right)$. It remains to discuss continuity of $\Phi$ on $P \times P$. Due to the properties of $\sqrt[\rightarrow]{z}$, continuity on this set is clear for all terms in \eqref{eq.PhiCont1st}	except for the integral expression
		\[
		J(\aalpha) = \int_{P} \int_{\mathbb{R}-ib_0}  \frac{K(z_1,z_2) \Psi_{++}(z_1,z_2)}{K_{\circ-}(\alpha_1,z_2)(z_2-\alpha_2)(z_1-\alpha_1)} dz_1 dz_2,
		\]
		where the polar factor $(z_2 - \alpha_2)$ is problematic. But for $\alpha_2$ close to $P$, we can change the contour from $P$ to $P'$ which encloses $\alpha_2$ and $h^-_1 \cup h^-_2$, see Figure \ref{fig:ContourDeform1}. This picks up a residue of the integrand at $z_2=\alpha_2$ which is given by
		\begin{align*}
			\frac{K(z_1,\alpha_2) \Psi_{++}(z_1,\alpha_2)}{K_{\circ-}(\alpha_1,\alpha_2)(z_1-\alpha_1)}.
		\end{align*} 
		This residue has the required continuity as can be seen from \eqref{eq.PsiContFormula2}, so we can safely let $\alpha_2 \to \alpha^{\star}_2$ for any $\alpha^{\star}_2 \in h^-_1 \cup h^-_2$, which gives the sought continuation. 
		\begin{figure}[h]
			\centering
			\includegraphics[width=1\textwidth]{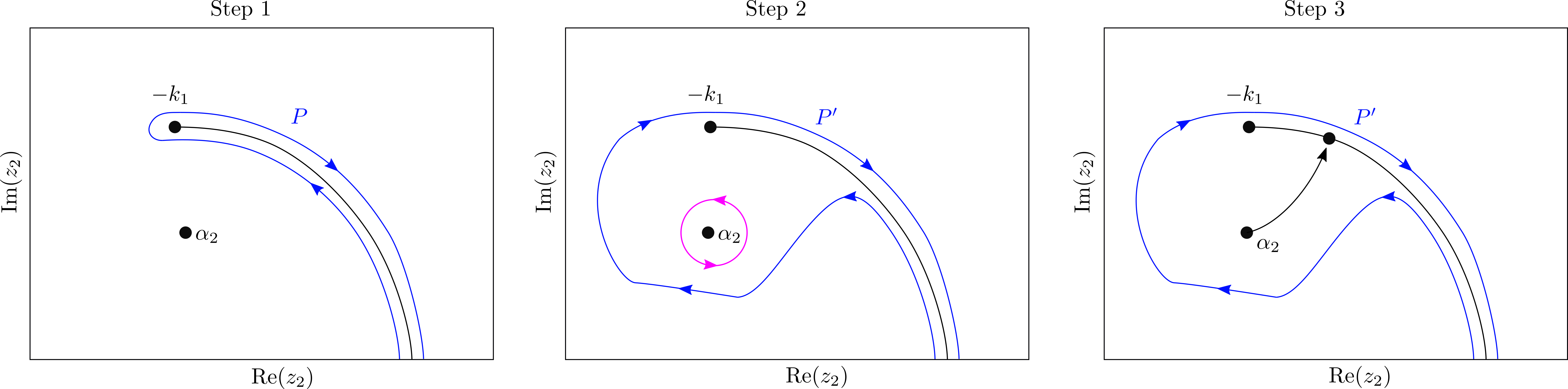}
			\caption{Visualisation of the change of contour performed in Theorem \ref{PhiCont}'s proof. For better visualisation, we only show the change locally about $h^-_1$. After the residue is `picked up' in Step 2, we can safely let $\alpha_2 \to \alpha^{\star}_2 \in h^-_1$ in Step 3 since the singularities of the integrand now lie completely on the contour $P'$ and the additional residue term poses no problems.}
			\label{fig:ContourDeform1}
		\end{figure}
		The residue of $\Phi$ at the pole $\alpha_1=\mathfrak{a}_1$ can be computed explicitly from \eqref{eq.PhiCont1st} since only the external additive term is singular at $\alpha_1=\mathfrak{a}_1$. Similarly, the residue of $\Phi$ at $\alpha_2=\mathfrak{a}_2$ is computed.
		{\hfill{}}
	\end{proof}
	
	The domain of analyticity of $\Phi$ is shown in Figure \ref{fig:Domain3/4} below.
	\begin{figure}[h]
		\centering
		\includegraphics[width=0.625\textwidth]{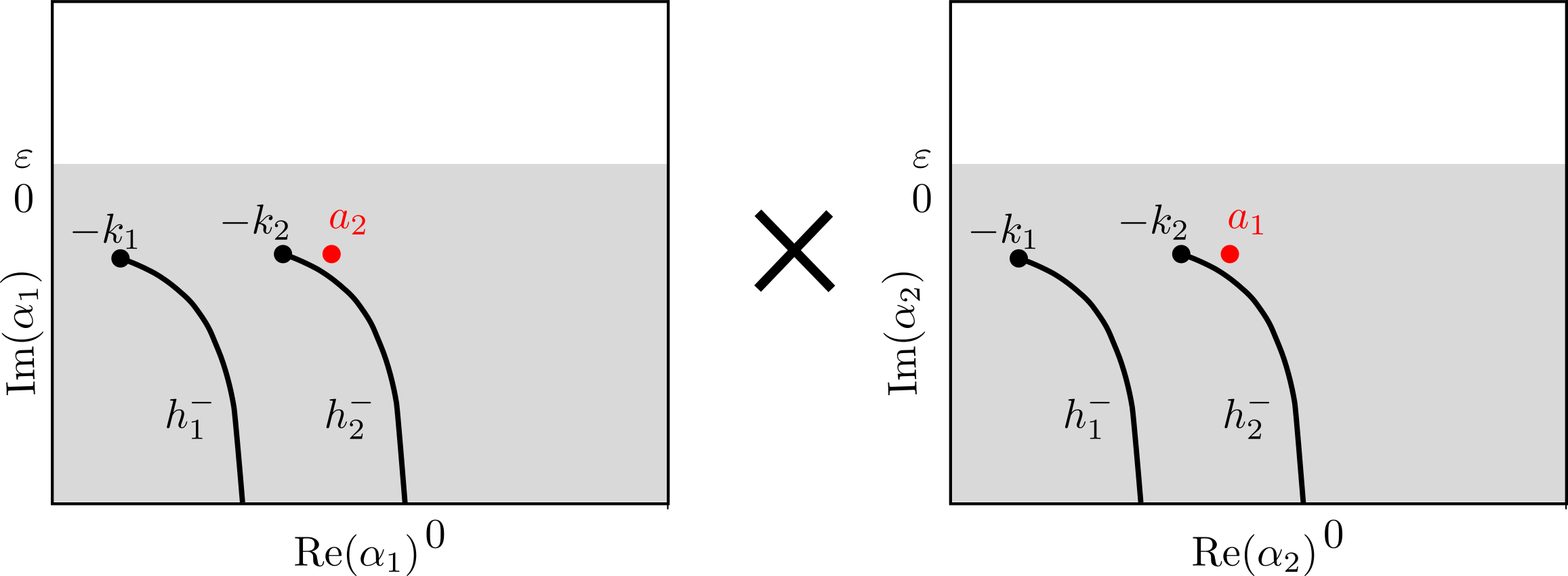}
		\caption{Domain of analyticity of $\Phi = K \Psi_{++}$ and $\Phi_{3/4}$. Polar singularities $\alpha_2 \equiv \mathfrak{a}_2$ and $\alpha_1 \equiv \mathfrak{a}_1$ are shown in red whereas the branch lines $h^-_1$ and $h^-_2$ are shown in black.}
		\label{fig:Domain3/4}
	\end{figure}	
	Recall that, by the Wiener-Hopf equation \eqref{eq.WienerHopf}, we have 
	\[
	\Phi_{3/4} = -K\Psi_{++} - P_{++} = -\Phi - P_{++}
	\]
	so the analyticity properties  of $\Phi$  hold for $\Phi_{3/4}$ as well. That is, $\Phi_{3/4}$ is analytic within the domain shown in Figure \ref{fig:Domain3/4}.
	
	\subsection{Singularities of spectral functions}\label{subsec:Singularities}
	
	Since we ultimately wish to let $\Im(k_1) \to 0$ and $\Im(k_2) \to 0$, let us investigate which singularities we would expect on $\mathbb{R}^2$, the surface of integration  in
	\begin{align}
		\psi(\x) = \frac{1}{4 \pi^2} \int\hspace{-.2cm}\int_{\mathbb{R}^2} \Psi_{++}(\aalpha) e^{-i \x \cdot \aalpha} d \aalpha, \label{eq.psi2} \\
		\phi_{\mathrm{sc}}(\x) = \frac{1}{4 \pi^2} \int\hspace{-.2cm}\int_{\mathbb{R}^2} \Phi_{3/4}(\aalpha) e^{-i \x \cdot \aalpha} d \aalpha. \label{eq.phi2}
	\end{align}
	This set, i.e.\! the intersection of singularities of $\Psi_{++}$ and $\Phi_{3/4}$ with $\mathbb{R}^2$, is henceforth referred to as the `real trace' of the singularities. By \cite{AssierShaninKorolkov2022}, knowledge of the real trace of the singularities is crucial to compute far-field asymptotics of $\phi_{\mathrm{sc}}$ and $\psi$.  	
	According to our previous analysis, assuming that our formulae hold in the limit 	$\Im(k_1) \to 0$ and $\Im(k_2) \to 0$, we find 
	\begin{gather*}
		\Re(\alpha_1) \equiv \mathfrak{a}_1, \ \Re(\alpha_2) \equiv \mathfrak{a}_2; \ \text{ polar sets, } \numberthis \label{eq.sing1} \\
		\Re(\alpha_1)  \equiv -k_1, \ \Re(\alpha_1)  \equiv -k_2, \ \Re(\alpha_2) \equiv -k_1, \ \Re(\alpha_2) \equiv -k_2; \ \text{ branch sets. } \numberthis \label{eq.sing2}
	\end{gather*}
	However, note that as $\Im(k_2) \to 0$ we also expect parts of the (complexified) circle defined by 
	$\alpha^2_1 + \alpha^2_2 = k^2_2$ to become singular points of $\Psi_{++}$: From the analytical continuation procedure, we know that such singularities can only `come from'  $H^- \setminus \{\mathfrak{a}_1\} \times H^- \setminus \{\mathfrak{a}_2\} $. However, we know how, exactly, $\Psi_{++}$ can be represented in $H^- \setminus \{\mathfrak{a}_1\} \times H^- \setminus \{\mathfrak{a}_2\}$, namely by \eqref{eq.PsiCont}. Therefore, to unveil $\Psi_{++}$'s singularities in $H^- \setminus \{\mathfrak{a}_1\} \times H^- \setminus \{\mathfrak{a}_2\}$, we just need to analyse the external term in \eqref{eq.PsiCont}, since the integral term is by construction analytic in $H^- \times H^-$. Now, in $H^- \setminus \{\mathfrak{a}_1\} \times H^- \setminus \{\mathfrak{a}_2\}$, the external factor is only singular for 
	\begin{align}
		K_{\circ +} = \frac{\sqrt[\rightarrow]{k^2_2 - \alpha^2_1} + \alpha_2}{\sqrt[\rightarrow]{k^2_1 - \alpha^2_1} + \alpha_2} =0 
	\end{align}
	i.e.\! whenever 
	\begin{align}
		\sqrt[\rightarrow]{k^2_2 - \alpha_1^2} = - \alpha_2, \label{eq.Sing1}
	\end{align}
	since by definition of $H^- \setminus \{ \mathfrak{a}_j\}, \ j=1,2$, the singularity sets given in \eqref{eq.sing1} and \eqref{eq.sing2} do not belong to $H^- \setminus \{\mathfrak{a}_j\}, \ j=1,2$. 
	As the branch of the square root is chosen such that $\sqrt[\rightarrow]{k^2_2} = k_2$, we find that if $\Re(\alpha_1) =0$, we must have $\Re(\alpha_2) =- k_2$, giving the first real singular point of  \eqref{eq.Sing1}. Now, by continuity, we find that $\Re(\alpha_2) \leq 0$ for all $\Re(\alpha_2)$ satisfying \eqref{eq.Sing1}. However, $\Re(\alpha_1)$ can take all values between $-k_2$ and $k_2$. 
	
	Similarly, from \eqref{eq.PsiCont2}
	we find that $\Psi_{++}$ is singular in $H^- \setminus \{\mathfrak{a}_1\} \times H^- \setminus \{\mathfrak{a}_2\}$ when 
	\begin{align}
		K_{+ \circ} = \frac{\sqrt[\rightarrow]{k^2_2 - \alpha^2_2} + \alpha_1}{\sqrt[\rightarrow]{k^2_1 - \alpha^2_2} + \alpha_1} =0
	\end{align}
	i.e.\! whenever
	\begin{align}
		\sqrt[\rightarrow]{k^2_2 - \alpha_2^2} = - \alpha_1. \label{eq.Sing2}
	\end{align}
	Then, just as before we find that \eqref{eq.Sing2} is satisfied for all $\Re(\alpha_2) \in [-k_2, k_2]$ and $\Re(\alpha_1) \leq 0$.

	Therefore, the real trace of the complexified circle $\{\aalpha \in \mathbb{C}^2| \ \alpha^2_1 + \alpha^2_2 = k^2_2\}$ that is a singularity of $\Psi_{++}$ can only be the intersection of the sets of solutions to \eqref{eq.Sing1} and \eqref{eq.Sing2}, i.e.\! the set  
	\begin{align*}
		\left\{\aalpha \in \mathbb{R}^2| \ \alpha_1^2 + \alpha^2_2 = k^2_2, \ \alpha_1 \leq 0, \ \alpha_2 \leq 0  \right\}.
	\end{align*}
	
	Similarly, using $\Phi = K \Psi_{++}$ to analyse the behaviour of $\Phi$ in $$\left(\UHP \times \mathbb{C} \setminus \left(h^-_1 \cup h^-_2 \cup \{\mathfrak{a}_1\}\right) \right) \cup \left( \mathbb{C} \setminus \left(h^-_1 \cup h^-_2 \cup \{\mathfrak{a}_2\}\right) \times \UHP\right),$$ we find that the part of the circle's real trace on which we expect $\Phi_{3/4}$ to be singular is given by
	\begin{align*}
		\left\{\aalpha \in \mathbb{R}^2| \ \alpha_1^2 + \alpha^2_2 = k^2_1, \ \alpha_1 \geq 0, \text{ or } \ \alpha_2 \leq 0  \right\}.	
	\end{align*}
	The real traces of the singularities are shown in Figure \ref{fig:RealTracesSingularities}	below. \\
	
	\begin{figure}[h]
		\includegraphics[width=\textwidth]{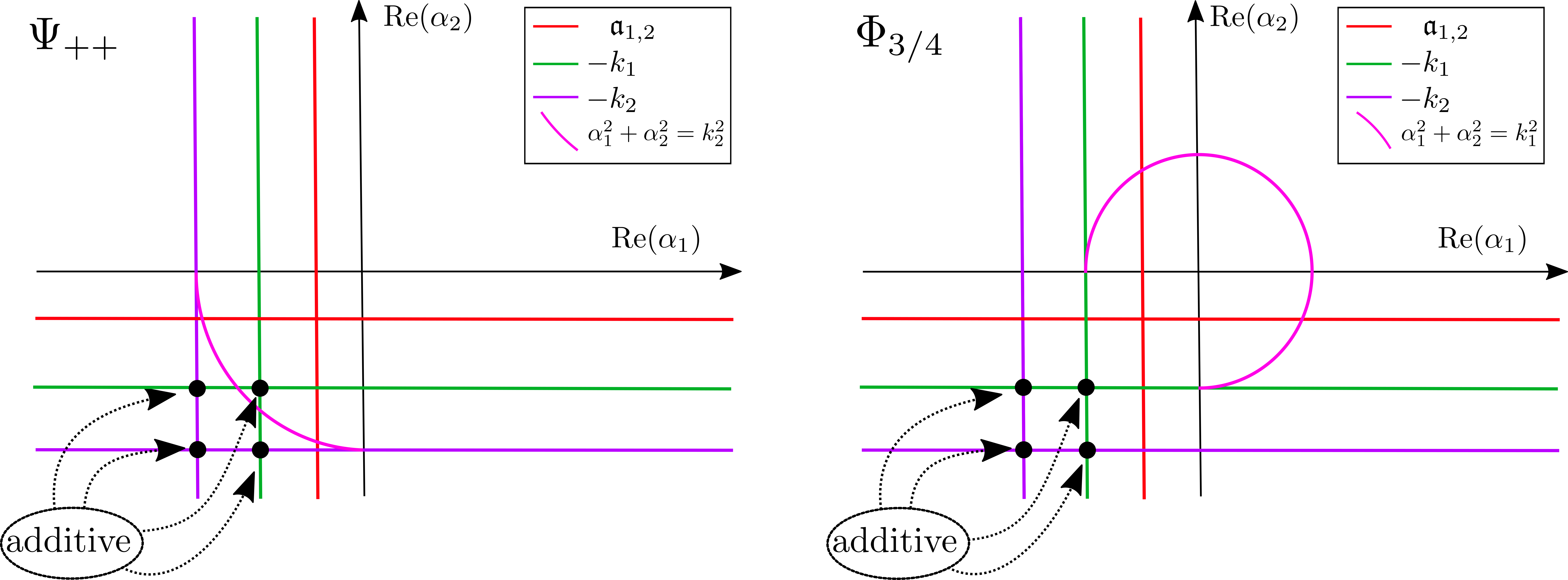}
		\caption{Real trace of the singularities of $\Psi_{++}$ and $\Phi_{3/4}$  in the case $\vartheta_0 \in (\pi, 3 \pi/2)$. The `additive' crossing of branch sets refers to the additive crossing property discussed in Section \ref{sec:AdditiveCrossing}. 
		}
		\label{fig:RealTracesSingularities}
	\end{figure}

	\red{\noindent \textbf{Change of incident angle.}  Let us now consider the case $\vartheta_0 \in (\pi/2, \pi)$. Due to symmetry, the case $\vartheta_0 \in(3 \pi/2, 2 \pi)$ can be dealt with similarly.  We  now treat $\vartheta_0$ as a parameter within the formulae for analytic continuation \eqref{eq.PsiContFormula1}, \eqref{eq.PsiContFormula2}, \eqref{eq.PsiCont}, and \eqref{eq.PsiCont2} of $\Psi_{++}$. This yields formulae for $\Psi_{++}$ when $\vartheta_0 \in (\pi/2, \pi)$. We then obtain new singularities within these formulae for analytic continuation. Namely, the external additive term in \eqref{eq.PsiContFormula1} becomes singular at $\{\aalpha_1 \equiv  - \sqrt[\rightarrow]{k_1^2 - \a_2^2}\}$. This procedure therefore yields a new singularity of $\Psi_{++}$ and $\Phi_{3/4}$ within $\LHP \times \mathbb{C}$. The real traces of the spectral functions' singularities in this case are shown in Figure \ref{fig:RealTracesCompl01}. Note that we may not allow $\vartheta_0 \in (0, \pi/2)$. This is because such change of incident angle changes the incident wave's wavenumber from $k_1$ to $k_2$, and therefore such change cannot be assumed to be continuous.
			\begin{center}
				\begin{figure}[h!]
					\includegraphics[width=\textwidth]{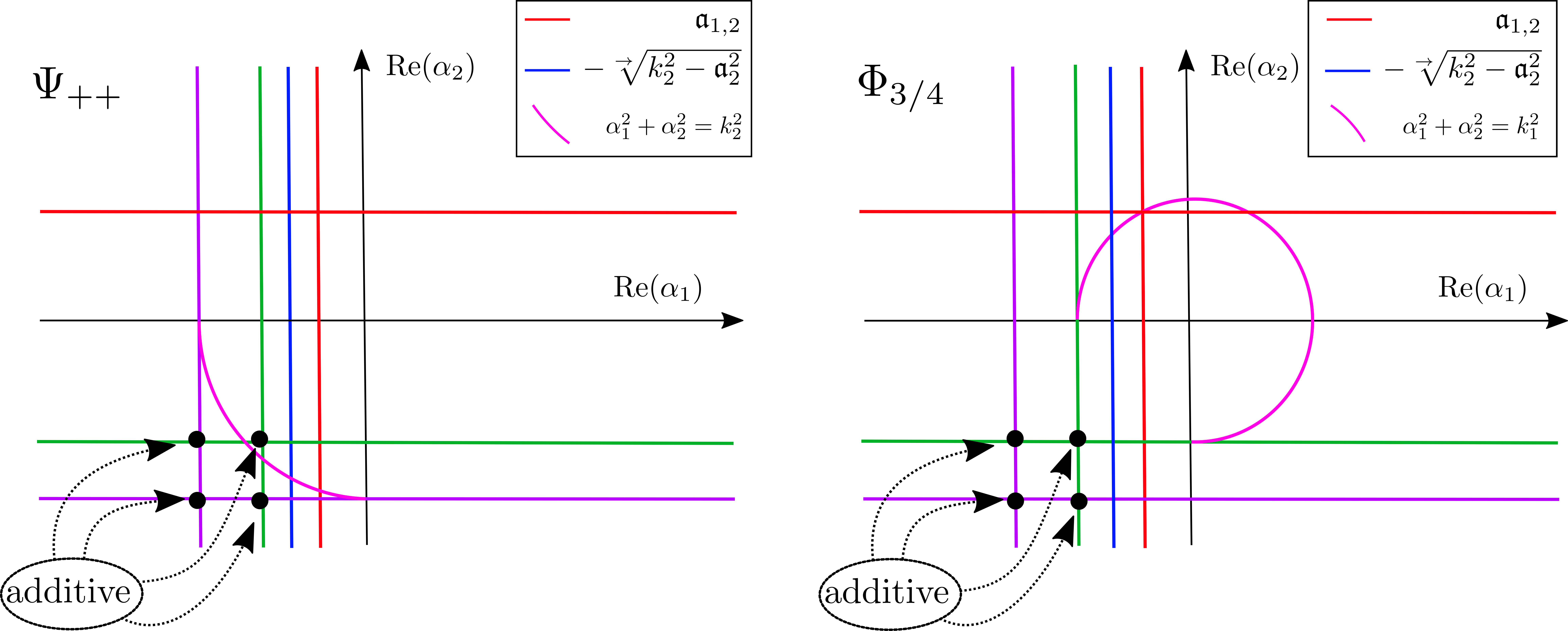}
					\caption{\red{Real traces of $\Psi_{++}$ and $\Phi_{3/4}$'s singularities in the case $\vartheta_0 \in (\pi/2, \pi)$. \BLUE{Again, the branch sets at $\alpha_{1,2} \equiv -k_1$ and $\alpha_{1,2} \equiv -k_2$ are coloured in green and purple, respectively.} Comparing with Figure \ref{fig:RealTracesSingularities}, we see the newly apparent singularity at $\alpha_1 \equiv - \sqrt[\rightarrow]{\blue{k_2}^2 - \a_2^2}$ (coloured in \BLUE{blue}) and that the singularity at $\alpha_2 \equiv \a_2$ has changed half-plane.}}
					\label{fig:RealTracesCompl01}
				\end{figure}
			\end{center}
		
	\begin{remark}[Failure of limiting absorption principle] In the case of $\vartheta_0 \in (\pi/2, \pi)$, we cannot directly impose the radiation condition \magenta{on the scattered and transmitted fields} via the limiting absorption principle, although, of course, a radiation condition still needs to be imposed. The failure of defining the radiation condition via the absorption principle is due to the fact that for positive imaginary part $\varkappa >0$ of $k_1$ and $k_2$, such incident angle changes the sign of $\a_2$: Whereas for $\vartheta_0 \in (\pi, 3 \pi/2)$ we are guaranteed $\Im(\a_1), \Im(\a_2) < 0$ whenever $\varkappa >0$ we now have $\Im(\a_1) <0,$ and $\Im(\a_2) > 0$ whenever $\varkappa >0$. Thus, when $\vartheta_0 \in (\pi/2, \pi)$, one has to carefully choose the `indentation' of $\mathbb{R}^2$ around the real traces of the singularities such that the radiation condition remains valid. Here, `indentation' refers to the novel concept of `bridge and arrow configuration' which is extensively discussed in \cite{AssierShaninKorolkov2022}. We plan to address this difficulty in future work.
	\end{remark}	
		}		
	\section{The additive crossing property}\label{sec:AdditiveCrossing}
	
	We want to investigate the behaviour of $\Phi_{3/4}$ on $P \times P$. In particular, we wish to investigate whether the additive crossing property introduced in \cite{AssierShanin2019} is satisfied with respect to the points $(-k_1,-k_1), (-k_2,-k_2), (-k_1,-k_2),$ and $(-k_2,-k_1)$ which are the points at which the branch sets are `crossing', see Figure \ref{fig:RealTracesSingularities}. Other than yielding a criterion for 3/4-basedness in the quarter-plane problem (cf.\! \cite{AssierShanin2019}), this property was also crucial to solving the simplified quarter-plane functional problem corresponding to having a source located at the quarter-plane's tip, see \cite{AssierShanin2021VertexGreensFunctions}. It also emerged in the different context of analytical continuation of real wave-fields defined on a Sommerfeld surface, see \cite{AssierShanin2021AnalyticalCont}. Therefore, it seems that the property of additive crossing is strongly related to the physical behaviour of the corresponding wave-fields. \red{\BLUE{Indeed, the additive crossing property is crucial to obtaining the correct far-field asymptotics as it prohibits the existence of unphysical waves, \Blue{as shown in} \cite{AssierShaninKorolkov2022}.}}

	We begin by studying 
	\begin{align}
		\phi_{\mathrm{sc}}(\x) = \frac{1}{4 \pi^2} \int\hspace{-.2cm}\int_{\mathbb{R}^2} \Phi_{3/4}(\boldsymbol{\alpha})e^{-i\boldsymbol{\alpha} \cdot \x} d\boldsymbol{\alpha}. \label{eq.PhiFT}
	\end{align}
	Let us investigate what happens when we change the domain of integration from $\mathbb{R} \times \mathbb{R}$ to $P \times P$ in \eqref{eq.PhiFT}. Due to the asymptotic behaviour of $\Phi_{3/4}$ (cf.\! Section \ref{thm:AsymptoticSpectral}), we will not obtain any `boundary terms at infinity'. 
		
		However, we have to account for the polar sets $\alpha_1 \equiv \mathfrak{a}_1$ and $\alpha_2 \equiv \mathfrak{a}_2$. As in \cite{AssierShanin2019}, this change of contour yields
		
		\begin{align*}
			4 \pi^2 \phi_{\mathrm{sc}}(\x) =&  \int_{P}\int_{P} \Phi_{3/4}(\boldsymbol{\alpha}) e^{-i\boldsymbol{\alpha}\cdot \x} d\boldsymbol{\alpha}\\
			& - 2 \pi i \int_{P}\res{\alpha_1=\mathfrak{a}_1}\left(\Phi_{3/4}(\boldsymbol{\alpha}) e^{-i\boldsymbol{\alpha}\cdot \x}\right) d\alpha_2 
			- 2 \pi i\int_{P} \res{\alpha_2 = \mathfrak{a}_2} \left(\Phi_{3/4}(\boldsymbol{\alpha}) e^{-i\boldsymbol{\alpha}\cdot \x}\right) d\alpha_1 \\
			& + 4 \pi^2
			\res{\alpha_2=\mathfrak{a}_2}\left(\res{\alpha_1=\mathfrak{a}_1} \Phi_{3/4}(\boldsymbol{\alpha}) e^{-i\boldsymbol{\alpha}\cdot \x}\right). \numberthis \label{eq.phiLongInt}
		\end{align*}
		But using \eqref{eq.PhiRes1}--\eqref{eq.PhiRes2} and the fact that $\Phi_{3/4}=-\Phi -P_{++}$
		we find that 
		\begin{align*}
			\res{\alpha_1=\mathfrak{a}_1}\left(\Phi_{3/4}(\boldsymbol{\alpha}) e^{-i\boldsymbol{\alpha}\cdot \x}\right) \text{ and }
			\res{\alpha_2 = \mathfrak{a}_2} \left(\Phi_{3/4}(\boldsymbol{\alpha}) e^{-i\boldsymbol{\alpha}\cdot \x}\right)
		\end{align*}
		are continuous at $h^-_1 \cup h^-_2$, so their integral over $P$ vanishes. Moreover, using \eqref{eq.PhiRes1} and $\Phi_{3/4}= -\Phi - P_{++}$, we find that the double residue in \eqref{eq.phiLongInt} vanishes. Therefore, we find:
		
		\begin{lemma} \
			The scattered field $\phi_{sc}$ satisfies 	
			\begin{align}
				\phi_{sc}(\x) = \frac{1}{4 \pi} \int_{P}\int_{P} \Phi_{3/4}(\boldsymbol{\alpha})e^{-i \boldsymbol{\alpha} \cdot \x} d\boldsymbol{\alpha}.
			\end{align}
			In particular, since, by Theorem \ref{thm:BackToPhysical}, $\phi_{\mathrm{sc}}(\x) =0$ in $Q_1$, we have
			\begin{align}
				\int_{P}\int_{P} \Phi_{3/4}(\boldsymbol{\alpha})e^{-i \boldsymbol{\alpha} \cdot \x} d\boldsymbol{\alpha} = 0, \ \forall \x \in Q_1. \label{eq.DoublePZeroes}
			\end{align}
		\end{lemma}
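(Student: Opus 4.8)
The plan is to start from the inverse double Fourier representation \eqref{eq.PhiFT} of $\phi_{\mathrm{sc}}$ over $\mathbb{R}^2$ and to deform both integration contours from $\mathbb{R}$ to the two-sided contour $P = P_1 \cup P_2$, one variable at a time. Deform the $\alpha_1$-contour first: by Theorem \ref{PhiCont}, $\Phi_{3/4} = -\Phi - P_{++}$ is analytic on $(H^-(\varepsilon)\setminus\{\mathfrak{a}_1\})\times(H^-(\varepsilon)\setminus\{\mathfrak{a}_2\})$ and continuous on $P\times P$; since $H^-(\varepsilon) = \LHP(\varepsilon)\setminus(h^-_1\cup h^-_2)$ contains the whole region swept as $\mathbb{R}$ is pushed down onto $P_1\cup P_2$, the only singularity met is the simple pole $\alpha_1\equiv\mathfrak{a}_1$, while the spectral edge conditions of Section \ref{thm:AsymptoticSpectral} supply the $\mathcal{O}(1/|\alpha_1|)$ decay that kills the arcs at infinity. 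Hence the deformation costs exactly a residue at $\alpha_1=\mathfrak{a}_1$ integrated against the $\alpha_2$-contour. Repeating the same argument in the $\alpha_2$-variable — now additionally producing a residue at $\alpha_2\equiv\mathfrak{a}_2$ and a double residue where both poles are crossed — reproduces \eqref{eq.phiLongInt}, i.e.\ the $P\times P$ double integral plus three residue correction terms.

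The core of the argument is then to show that these three residue terms vanish. For the two single-integral terms I would insert the explicit residue formulae \eqref{eq.PhiRes1}--\eqref{eq.PhiRes2}, the relation $\Phi_{3/4}=-\Phi-P_{++}$, and $\res{\alpha_1=\mathfrak{a}_1}P_{++}=1/(\alpha_2-\mathfrak{a}_2)$ (symmetrically $\res{\alpha_2=\mathfrak{a}_2}P_{++}=1/(\alpha_1-\mathfrak{a}_1)$), which give $\res{\alpha_1=\mathfrak{a}_1}\Phi_{3/4} = (\alpha_2-\mathfrak{a}_2)^{-1}\big(K_{\circ-}(\mathfrak{a}_1,\alpha_2)/K_{\circ-}(\mathfrak{a}_1,\mathfrak{a}_2)-1\big)$. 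The key observation is that $\res{\alpha_1=\mathfrak{a}_1}\big(\Phi_{3/4}(\aalpha)e^{-i\aalpha\cdot\x}\big)$ depends on $\alpha_2$ only through $K_{\circ-}(\mathfrak{a}_1,\alpha_2)$, the factor $1/(\alpha_2-\mathfrak{a}_2)$, and the exponential $e^{-i\alpha_2 x_2}$, none of which has a branch point on $h^-_1\cup h^-_2$ — those curves are the branch sets of $\sqrt[\rightarrow]{k^2_j-\alpha^2_1}$, not of anything depending on $\alpha_2$ — so this residue is single-valued and continuous across both branch curves. Since $P$ runs along each $h^-_j$ once upwards and once downwards, the two banks cancel and the integral over $P$ is zero; the $\alpha_2$-residue term vanishes by the mirror-image argument. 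For the double residue, the bracket in $\res{\alpha_1=\mathfrak{a}_1}\Phi_{3/4}$ vanishes at $\alpha_2=\mathfrak{a}_2$, so that expression has no pole there and the iterated residue is $0$.

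With all three corrections gone, \eqref{eq.phiLongInt} collapses to $4\pi^2\,\phi_{\mathrm{sc}}(\x) = \int_P\int_P \Phi_{3/4}(\aalpha)e^{-i\aalpha\cdot\x}\,d\aalpha$, which is the first assertion of the lemma. The second is then immediate: by Theorem \ref{thm:BackToPhysical} we have $\phi_{\mathrm{sc}}\equiv 0$ on $Q_1=\text{PW}$, so the right-hand side vanishes identically for $\x\in Q_1$, which is \eqref{eq.DoublePZeroes}.

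The step I expect to be the main obstacle is the first one, the joint contour deformation from $\mathbb{R}^2$ to $P\times P$: one must carefully track, in two complex variables simultaneously, which singular sets of $\Phi_{3/4}$ are crossed during the homotopy of surfaces — only the polar sets $\alpha_1\equiv\mathfrak{a}_1$ and $\alpha_2\equiv\mathfrak{a}_2$, never the branch curves, precisely because $P$ hugs the latter — and one must check that every boundary contribution at infinity is discarded, including the mixed arcs where one variable is large while the other sits near $P$, for which the product estimate \eqref{Psi++Behaviour2} is exactly what is needed. Once these ingredients are in place, the residue bookkeeping is the two-variable analogue of the one-dimensional residue-crossing computation in \cite{AssierShanin2019} and is routine.
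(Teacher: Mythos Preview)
Your proposal is correct and follows essentially the same route as the paper: the paper too deforms $\mathbb{R}^2$ to $P\times P$ to obtain \eqref{eq.phiLongInt}, then uses \eqref{eq.PhiRes1}--\eqref{eq.PhiRes2} together with $\Phi_{3/4}=-\Phi-P_{++}$ to observe that the two single residues are continuous across $h^-_1\cup h^-_2$ (hence integrate to zero over $P$) and that the double residue vanishes. The only slip is a wording issue in your key observation: the curves $h^-_j$ in the $\alpha_2$-plane are the branch sets of $\sqrt[\rightarrow]{k^2_j-\alpha_2^2}$, not of $\sqrt[\rightarrow]{k^2_j-\alpha_1^2}$; the point (which you use correctly) is that $K_{\circ-}(\mathfrak{a}_1,\alpha_2)$ contains only $\sqrt[\rightarrow]{k^2_j-\mathfrak{a}_1^2}$ and is therefore rational in $\alpha_2$.
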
	
		
		\subsection{Three quarter-basedness and additive crossing}
	
		Recall that  $\Phi_{3/4}$ is defined on $P$ by continuity. We can therefore define values of $\Phi_{3/4}$ on $h^-_{1,2}$ depending on whether $\alpha \in h^-_{1,2} \subset \mathbb{C}$ is approached from the left, or the right, see Figure \ref{fig:LeftRightShore}.
		\begin{figure}[h!]
			\centering
			\includegraphics[width=.45\textwidth]{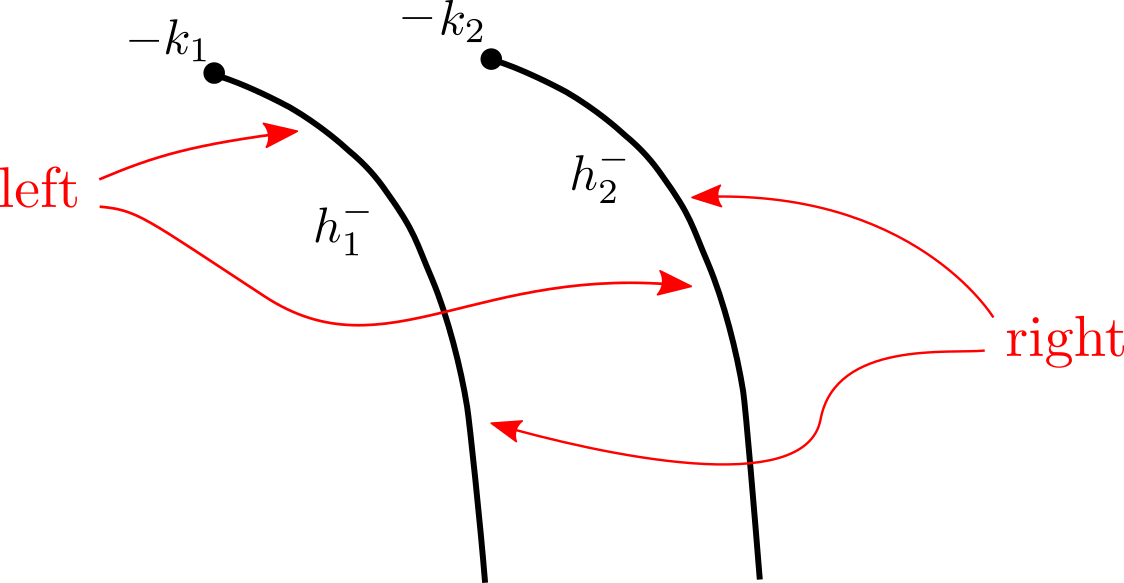}
			\caption{Visualisation of `left and right side' of the cuts $h^-_{1,2}$. }
			\label{fig:LeftRightShore}
		\end{figure}
		Let  $\alpha^{l,r}_{1,2}$ denote the values on the left (resp. right) side of $h^-_1$ and $h^-_2$, respectively, and define {\fontsize{8pt}{0pt}\selectfont\begin{align*}
				\Phi_{AC}(\aalpha) = \begin{cases}
				\Phi_{3/4}(\alpha^r_1,\alpha^r_2) + \Phi_{3/4}(\alpha^l_1,\alpha^l_2)  -\Phi_{3/4}(\alpha^l_1,\alpha^r_2) 
				-\Phi_{3/4}(\alpha^r_1,\alpha^l_2), \text{ if } \aalpha \in (h^-_1 \times h^-_1) \cup (h^-_2 \times h^-_2), \\[.1em]
				\Phi_{3/4}(\alpha^l_1,\alpha^r_2) 
				+\Phi_{3/4}(\alpha^r_1,\alpha^l_2)  -\Phi_{3/4}(\alpha^r_1,\alpha^r_2) - \Phi_{3/4}(\alpha^l_1,\alpha^l_2),  \text{ if }  \aalpha \in (h^-_2 \times h^-_1) \cup (h^-_1 \times h^-_2).
			\end{cases}
		\end{align*}}
	 
	 \noindent \RED{Using the analyticity properties as well as the asymptotic behaviour \eqref{Psi++Behaviour1}--\eqref{Psi++Behaviour2} of $\Phi_{3/4}$, it can be shown that $\Phi_{AC}$ is Lipschitz continuous on $h^-_1 \cup h^-_2$.}
		We now rewrite \eqref{eq.DoublePZeroes} as
		\begin{align}
			\int_{P} \int_{P} \Phi_{3/4}(\boldsymbol{\alpha}) e^{-i \boldsymbol{\alpha} \cdot \x} d\boldsymbol{\alpha} =
			\int_{h^-_1\cup h^-_2} \int_{h^-_1\cup h^-_2}	\Phi_{AC}(\boldsymbol{\alpha}) e^{-i\boldsymbol{\alpha}\cdot \x} d\boldsymbol{\alpha} =0, \ \forall \x \in Q_1. \label{eq.ACInt}
		\end{align}
		The equality \eqref{eq.ACInt} is always satisfied if $\Phi_{AC} =0$ on $(h^-_1\cup h^-_2) \times (h^-_1\cup h^-_2)$. But
		in fact, since $\Phi_{AC}$ is Lipschitz continuous, we can apply Corollary \ref{thm.LaplaceLike2} and we find that \eqref{eq.ACInt} is equivalent to 
		\begin{align*}
			\	\Phi_{AC}(\boldsymbol{\alpha}) = 0, \ \forall \ \boldsymbol{\alpha} \in (h^-_1 \cup h^-_2) \times  (h^-_1\cup h^-_2), \numberthis \label{eq.ACProperty}
		\end{align*}
		i.e.\! \eqref{eq.ACInt} is equivalent to the fact that $\Phi_{3/4}$ \red{\BLUE{satisfies}}
		\begin{align}
			\Phi_{3/4}(\alpha^r_1,\alpha^r_2) + \Phi_{3/4}(\alpha^l_1,\alpha^l_2) -\Phi_{3/4}(\alpha^l_1,\alpha^r_2) 
			-\Phi_{3/4}(\alpha^r_1,\alpha^l_2) = 0 \label{eq.ACProperty3}
		\end{align}	
		for all $(\alpha_1,\alpha_2) \in (h^-_1 \cup h^-_2) \times  (h^-_1\cup h^-_2)$. Note that the Corollary can be applied due to the minus in front of the exponential in \eqref{eq.ACInt}, so in the setting of Corollary \ref{thm.LaplaceLike2} we choose the lines $\{x_1 \leq 0, x_2 =0\}$ and $\{x_1=0, x_2 \leq 0 \}$. Although \eqref{eq.ACProperty} and \eqref{eq.ACProperty3} depend on the choice of branch cuts $h^-_1$ and $h^-_2$, it can be shown that if \eqref{eq.ACProperty3} holds for one choice of branch cuts, it holds for every choice. Therefore, it makes sense to say that the equality \eqref{eq.ACProperty3} holds with respect to the points $(-k_j,-k_l), \ k,l =1,2$ i.e.\! with respect to the points of crossing of branch sets, as illustrated in Figure \ref{fig:RealTracesSingularities}.  
		\red{\BLUE{Moreover, since $\Phi_{3/4}$ is bounded near its branch sets,  \eqref{eq.ACProperty3} is sufficient to prove that $\Phi_{3/4}$ satisfies the following  \emph{additive crossing property}:  
		There exists some neighbourhood $U_{j,l} \subset \mathbb{C}^2$ of $(k_{j},k_l), \ j,l=1,2$, such that
			\begin{align}
				\Phi_{3/4}(\aalpha) = F_{1j}(\aalpha) + F_{2l}(\aalpha), \forall \aalpha \in U_{j,l}, \ j,l=1,2, \label{eq.ACProperty2}
			\end{align}	
		where $F_{1j}(\aalpha)$ is regular at $\alpha_1 \equiv - k_j$, and $F_{2l}$ is regular at $\alpha_2 \equiv -k_l$. The proof is identical to the corresponding proof \Blue{of the additive crossing property satisfied by} the spectral function \Blue{of} the quarter-plane problem (see \cite{AssierShanin2019} Section 4), and hence omitted. As mentioned at the beginning of Section \ref{sec:AdditiveCrossing}, the additive crossing property is directly linked to the far-field behaviour of the scattered and transmitted fields.
		}}

		\subsection{Reformulation of the functional problem}\label{sec:FunctionalProblemReformulated}
		
		\Blue{Finally, we obtain}
		the following reformulation of Theorem \ref{thm:BackToPhysical}.
		
		\begin{theorem}\label{thm:secondfunctional}
			Let $P_{++}$ and $K$ be as in \eqref{eq.PandKDefinition}. Let $\Psi_{++}$ satisfy the following properties:
			\begin{enumerate}
				\item \ $\Psi_{++}$ is analytic in 	\[\left(\UHP \times \mathbb{C} \setminus \left(h^-_1 \cup h^-_2 \cup \{\mathfrak{a}_1\}\right) \right) \cup \left( \mathbb{C} \setminus \left(h^-_1 \cup h^-_2 \cup \{\mathfrak{a}_2\}\right) \times \UHP\right),\]
				\item \ There exists an $\varepsilon>0$ such that the function $\Phi_{3/4}$ defined by $\Phi_{3/4} = -K \Psi_{++} - P_{++}$ is analytic in 
				\[
				\left(H^-(\varepsilon)\setminus \{\mathfrak{a}_1\}\right) \times \left(H^-(\varepsilon)\setminus \{\mathfrak{a}_2\}\right)
				\]
				with simple poles at $ \alpha_1 =\mathfrak{a}_1$ and $\alpha_2=\mathfrak{a}_2$,
				\item \ The residues of $-\Phi_{3/4} - P_{++}$ at the poles $ \alpha_1 =\mathfrak{a}_1$ and $\alpha_2=\mathfrak{a}_2$ are given by \eqref{eq.PhiRes1} and \eqref{eq.PhiRes2},
				\item \ $\Phi_{3/4}$ is continuous on $P \times P$ and satisfies the additive crossing property for each of the following points: $(-k_1,-k_1), \ (-k_1,-k_2), \ (-k_2,-k_2),$ and $(-k_2,-k_2)$,
				\item \ The functions $\Psi_{++}$ and $\Phi_{3/4}$ have the asymptotic behaviour \eqref{Psi++Behaviour1}--\eqref{Psi++Behaviour2}.
			\end{enumerate}	
			Then the fields $\phi_{\mathrm{sc}}$ and $\psi$ defined by
			\begin{align}
				\phi_{sc}(\x) = \frac{1}{4 \pi^2} \int\hspace{-.2cm}\int_{\mathbb{R}^2} \Phi_{3/4}(\aalpha)e^{-i \aalpha \cdot \x} d\aalpha \  \text{ and } \ 
				\psi(\x) = \frac{1}{4 \pi^2} \int\hspace{-.2cm}\int_{\mathbb{R}^2} \Psi_{++}(\aalpha)e^{-i \aalpha \cdot \x} d\aalpha  \label{eq.physicalfields}
			\end{align} 
			satisfy the penetrable wedge problem defined by \eqref{eq:1.1}--\eqref{eq:1.6} with respect to the incident wave $\phi_{\iin}(\x) = \exp(-i(\mathfrak{a}_1x_1+\mathfrak{a}_2x_2))$. Moreover, they satisfy the Meixner conditions \eqref{eq.2.56}--\eqref{eq.2.57} as well as the Sommerfeld radiation condition.
		\end{theorem}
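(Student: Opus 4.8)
The plan is to reduce the statement to Theorem \ref{thm:BackToPhysical}: the five hypotheses on $\Psi_{++}$ will be shown to imply the four conditions of Definition \ref{def:FirstFunctionalFormulation}, after which Theorem \ref{thm:BackToPhysical} gives \eqref{eq:1.1}--\eqref{eq:1.6} and the Remark following it gives the Meixner and Sommerfeld conditions. Conditions 1, 2, 4 of Definition \ref{def:FirstFunctionalFormulation} are a matter of domain bookkeeping. Since $\Phi_{3/4}$ is \emph{defined} by $\Phi_{3/4}=-K\Psi_{++}-P_{++}$, the Wiener--Hopf equation \eqref{eq.WienerHopf} holds wherever all three functions are analytic, and one checks that the domain in hypothesis 1 together with the domain in hypothesis 2 covers $\UHP(-\varepsilon)\times\UHP(-\varepsilon)$: indeed $\UHP(-\varepsilon)\cap H^-(\varepsilon)=\S$, because $h^-_1\cup h^-_2\subset\{\Im\le-\Im(k_j)\}\subset\{\Im<-\varepsilon\}$ and $\mathfrak{a}_1,\mathfrak{a}_2$ lie strictly below $\S$, while $K$ is zero-free on $\S\times\S$. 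Hence $\Psi_{++}$ is analytic on $\UHP(-\varepsilon)\times\UHP(-\varepsilon)$ (condition 2) and \eqref{eq.WienerHopf} holds on $\S\times\S$ (condition 1); condition 4 is hypothesis 5.

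The core of the proof is condition 3, that $\Phi_{3/4}$ is $3/4$-based. Let $\phi_{\mathrm{sc}}$ be as in \eqref{eq.physicalfields}; it suffices to show $\phi_{\mathrm{sc}}\equiv 0$ on $Q_1$, for then $\Phi_{3/4}=\mathcal{F}[\phi_{\mathrm{sc}}]=\mathcal{F}_{3/4}[\phi_{\mathrm{sc}}|_{\mathbb{R}^2\setminus Q_1}]$. To this end I would run the contour argument of Section \ref{sec:AdditiveCrossing} in reverse. Using hypotheses 2, 3 and 5 (analyticity of $\Phi_{3/4}$ on $(H^-(\varepsilon)\setminus\{\mathfrak{a}_1\})\times(H^-(\varepsilon)\setminus\{\mathfrak{a}_2\})$, the residue formulae \eqref{eq.PhiRes1}--\eqref{eq.PhiRes2}, continuity on $P\times P$, and the decay \eqref{Psi++Behaviour1}--\eqref{Psi++Behaviour2}), one deforms the surface of integration from $\mathbb{R}\times\mathbb{R}$ to $P\times P$; exactly as in the lemma of Section \ref{sec:AdditiveCrossing}, the residue contributions picked up at $\alpha_1=\mathfrak{a}_1$, at $\alpha_2=\mathfrak{a}_2$, and the double residue are continuous across $h^-_1\cup h^-_2$ and hence integrate to zero, so $4\pi^2\phi_{\mathrm{sc}}(\x)=\int_P\int_P\Phi_{3/4}(\aalpha)e^{-i\aalpha\cdot\x}\,d\aalpha$ for all $\x$. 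Restricting to $\x\in Q_1$ (where the integrand decays on $P\times P$) and collapsing each copy of $P$ onto the two shores of $h^-_1\cup h^-_2$ turns this, as in \eqref{eq.ACInt}, into $4\pi^2\phi_{\mathrm{sc}}(\x)=\int_{h^-_1\cup h^-_2}\int_{h^-_1\cup h^-_2}\Phi_{AC}(\aalpha)e^{-i\aalpha\cdot\x}\,d\aalpha$. Now hypothesis 4 enters: arguing as in \cite{AssierShanin2019}, the local additive splittings \eqref{eq.ACProperty2} at the four crossing points, combined with the analyticity of $\Phi_{3/4}$ off the branch sets and the independence of \eqref{eq.ACProperty3} from the choice of branch cuts, force $\Phi_{AC}\equiv 0$ on $(h^-_1\cup h^-_2)\times(h^-_1\cup h^-_2)$, whence $\phi_{\mathrm{sc}}\equiv 0$ on $Q_1$.

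With all four conditions of Definition \ref{def:FirstFunctionalFormulation} verified, Theorem \ref{thm:BackToPhysical} gives that $\phi_{\mathrm{sc}}$ and $\psi$ satisfy \eqref{eq:1.1}--\eqref{eq:1.6} for $\phi_{\iin}=\exp(-i(\mathfrak{a}_1x_1+\mathfrak{a}_2x_2))$; the Sommerfeld radiation condition follows from $\Im(k_{1,2})>0$ via the limiting absorption principle, and the Meixner conditions \eqref{eq.2.56}--\eqref{eq.2.57} follow from the decay \eqref{Psi++Behaviour1}--\eqref{Psi++Behaviour2} by the Abelian theorem, exactly as in the Remark after Theorem \ref{thm:BackToPhysical}.

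I expect the main obstacle to be the last step of the $3/4$-basedness argument: upgrading the \emph{pointwise} additive crossing hypothesis (at the four crossings of the branch sets) to the \emph{global} vanishing $\Phi_{AC}\equiv 0$ along the whole product of branch curves, which is what actually forces $\phi_{\mathrm{sc}}$ to vanish on $Q_1$. A related technical point is justifying the contour deformations from $\mathbb{R}^2$ to $P\times P$ and then to $(h^-_1\cup h^-_2)^2$ given that $\Phi_{3/4}$ is only conditionally integrable over $\mathbb{R}^2$. Both are handled as in \cite{AssierShanin2019}, which is why the argument can be kept brief; I would only indicate explicitly the places where the kernel $K$ (and the shape of $H^-(\varepsilon)$) enters differently from the quarter-plane case.
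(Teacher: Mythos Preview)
Your approach is essentially the same as the paper's: reduce to Theorem \ref{thm:BackToPhysical} by verifying the four conditions of Definition \ref{def:FirstFunctionalFormulation}, with the substantive work being condition 3 ($3/4$-basedness), which is obtained by running the contour/additive-crossing argument of Section \ref{sec:AdditiveCrossing} in the reverse direction and deferring the passage from local additive crossing to global vanishing of $\Phi_{AC}$ to \cite{AssierShanin2019}. The paper's own proof is a one-line appeal to Sections \ref{sec:AnalyticalContinuation}--\ref{sec:AdditiveCrossing}; your write-up simply makes explicit the bookkeeping and the reverse implication that the paper leaves implicit.
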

		
		The theorem's proof is immediate since, according to Sections \ref{sec:AnalyticalContinuation}--\ref{sec:AdditiveCrossing}, the conditions 1--5 of Theorem \ref{thm:secondfunctional} imply that $\Psi_{++}$ and $\Phi_{3/4}$ satisfy the penetrable wedge functional problem and therefore Theorem \ref{thm:BackToPhysical} holds for $\phi_{\mathrm{sc}}$ and $\psi$.
		
		\section{Concluding remarks}\label{sec:ConcludingRemarks}
		
		We have shown that the novel additive crossing property\Blue{, which was} introduced in \cite{AssierShanin2019} in the context of diffraction by a quarter-plane\Blue{,} holds \Blue{for the problem} of diffraction by a penetrable wedge. Indeed, in similarity to the one dimensional Wiener-Hopf technique, the spectral functions' \Blue{singularities within $\mathbb{C}^2$} solely depend on the kernel $K$ and the forcing $P_{++}$, and therefore the techniques developed by Assier and Shanin in \cite{AssierShanin2019} could be adapted to the penetrable wedge diffraction problem, once equivalence of the penetrable wedge functional problem and the physical problem was shown in Section \ref{subsec:Reform1st}. 
		However, as in \cite{AssierShanin2019}, we cannot apply Liouville's theorem since, according to Theorem \ref{thm:secondfunctional}, the domains of analyticity of the unknowns $\Psi_{++}$ and $\Phi_{3/4}$ span all of $\mathbb{C}^2$ \emph{minus some set of singularities}. 
		
		Nonetheless, using the in Section \ref{subsec:Singularities} established real traces of the spectral functions' singularities, we expect to be able to obtain far-field asymptotics of the physical fields using the framework developed in \cite{AssierShaninKorolkov2022}. In particular, as in \cite{AssierAbrahams2021}, we expect the diffraction coefficient in $\mathbb{R}^2 \setminus \text{PW}$ (resp. $\text{PW}$) to be proportional to $\Psi_{++}$ (resp. $\Phi$) evaluated at a given point. \red{Moreover, we expect that a similar phenomenon holds for the lateral waves. That is, we expect that the results of the present article and \cite{AssierShaninKorolkov2022} allow us to represent the lateral waves such that their decay and phase are explicitly known, whereas their coefficients are proportional to, say, $\Psi_{++}$, evaluated at a given point.} Thus, we expect to be able to use the results of \cite{Kunz2021diffraction} to accurately approximate the far-field in the spirit of \cite{AssierAbrahams2020}. We moreover plan to test \Blue{far-field} accuracy of Radlow's erroneous ansatz (\Blue{which was given in} \cite{Radlow1964PW}).
		
		Finally, we note that Liouville's theorem is not only applicable to functions in $\mathbb{C}^2$ but also to functions defined on suitably `nice' complex manifolds, see \cite{LiZhangZhang2018,Lin1988}. Therefore, gaining a better understanding of the complex manifold on which $\Psi_{++}$ and $\Phi_{3/4}$ are defined 
		could be crucial to completing the 2D Wiener-Hopf technique. Note that this final step is presumably easier for the penetrable wedge than for the quarter-plane since in the latter, the real trace of the complexified circle is a branch set (see \cite{AssierShanin2019}) which could drastically change the topology of the sought complex manifold. \\

\noindent \textbf{Funding:} The authors would like to acknowledge funding by EPSRC (EP/W018381/1 and EP/N013719/1) for RCA and a University of Manchester Dean’s scholarship award for VDK.
		
\appendix
		
		
	\section{Uniqueness theorems}\label{app.Uniqueness}
		
		\begin{theorem}\label{lem:FromIntegralToInterface}
			
			For $j=1,2,3,4$, let $f_{j}: [0, \infty) \to \mathbb{C}$ be integrable and such that $f_j(x) = \mathcal{O}(x^{\nu})$ for some $\nu > -1$ as $x \to 0$. Assume that for all $\alpha_{1}, \alpha_{2} \in \mathbb{R}$, we have 
			\begin{align}
				\int_{0}^{\infty}f_1(x)e^{i\alpha_2x} dx + \int_{0}^{\infty}f_2(x)e^{i\alpha_1x}dx + \alpha_1\int_{0}^{\infty}f_3(x)e^{i\alpha_2x}dx + \alpha_2 \int_{0}^{\infty}f_4(x)e^{i\alpha_1x}dx =0. \label{eq.Unique1}
			\end{align}
			Then $f_1=f_2=f_3=f_4\equiv 0$.
		\end{theorem}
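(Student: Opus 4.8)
The plan is to reduce the statement to the injectivity of the Fourier transform on $L^1$. For $j=1,2,3,4$ and $\alpha\in\mathbb{R}$ set
\[
\hat f_j(\alpha)=\int_0^\infty f_j(x)e^{i\alpha x}\,dx .
\]
Since $f_j$ is integrable on $[0,\infty)$ (the growth hypothesis $f_j(x)=\mathcal{O}(x^\nu)$ with $\nu>-1$ only serves to guarantee integrability near the origin), each $\hat f_j$ is well defined, continuous, bounded by $\|f_j\|_{L^1}$, and tends to $0$ as $|\alpha|\to\infty$ by the Riemann--Lebesgue lemma. In this notation the hypothesis \eqref{eq.Unique1} reads
\[
\hat f_1(\alpha_2)+\hat f_2(\alpha_1)+\alpha_1\hat f_3(\alpha_2)+\alpha_2\hat f_4(\alpha_1)=0\qquad\text{for all }\alpha_1,\alpha_2\in\mathbb{R}.
\]

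First I would eliminate the $f_3$ and $f_4$ terms by a separation-of-variables argument. Fix $\alpha_2\in\mathbb{R}$ and regard the identity as an equation in $\alpha_1$: the three quantities $\hat f_1(\alpha_2)$, $\hat f_2(\alpha_1)$ and $\alpha_2\hat f_4(\alpha_1)$ are all bounded in $\alpha_1$ (by a constant depending on $\alpha_2$, namely $|\hat f_1(\alpha_2)|+\|f_2\|_{L^1}+|\alpha_2|\,\|f_4\|_{L^1}$), so the remaining term $\alpha_1\hat f_3(\alpha_2)$ must also stay bounded as $|\alpha_1|\to\infty$, which is only possible if $\hat f_3(\alpha_2)=0$. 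As $\alpha_2$ was arbitrary, $\hat f_3\equiv 0$ on $\mathbb{R}$; by the symmetric argument (fix $\alpha_1$ and let $|\alpha_2|\to\infty$) also $\hat f_4\equiv 0$.

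It then remains to treat the reduced identity $\hat f_1(\alpha_2)+\hat f_2(\alpha_1)=0$. Since its left-hand side separates, fixing one variable shows that $\hat f_1$ and $-\hat f_2$ are both equal to the same constant; by the Riemann--Lebesgue lemma that constant is $0$, hence $\hat f_1\equiv\hat f_2\equiv 0$. Finally, extending each $f_j$ by zero to $(-\infty,0)$ yields a function in $L^1(\mathbb{R})$ whose full-line Fourier transform is exactly $\hat f_j\equiv 0$, so by the uniqueness theorem for the Fourier transform on $L^1(\mathbb{R})$ we conclude $f_j=0$ almost everywhere for $j=1,2,3,4$, which is the claim. (Alternatively, one could note that each $\hat f_j$ extends holomorphically to the open upper half-plane and invoke the identity theorem there, but this is not needed.)

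The argument is almost entirely elementary; the only non-trivial external inputs are the Riemann--Lebesgue lemma and $L^1$-uniqueness of the Fourier transform. The one point deserving a little care is the separation step: one must take the limit in a single variable while the other is held fixed, so that the ``bounded in $\alpha_1$'' estimate is genuinely uniform in $\alpha_1$; beyond that I do not anticipate any real obstacle.
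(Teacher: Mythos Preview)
Your proof is correct and follows essentially the same approach as the paper's: introduce the half-line transforms $F_j$, exploit separation of variables together with decay (or boundedness) of the $F_j$ at infinity to force each $F_j\equiv 0$, then invoke injectivity of the transform. The only cosmetic differences are that the paper uses the Abelian theorem rather than Riemann--Lebesgue for the decay, and it eliminates the functions in a different order (setting $\alpha_2=0$ first to kill $F_2$, then $\alpha_1=0$ to kill $F_1$, and only then dealing with $F_3,F_4$), whereas you dispose of $\hat f_3,\hat f_4$ first via the unboundedness of $\alpha_1\hat f_3(\alpha_2)$; your ordering is arguably tidier.
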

		
		\begin{proof}
			For simplicity, let us set 	
			\[
			F_j(\alpha) = \int_{0}^{\infty}f_j(x) e^{i \alpha x} dx, \text{ for } j=1,2,3,4.
			\]
			Therefore, \eqref{eq.Unique1} can be rewritten as
			\begin{align}
				F_1(\alpha_2) +F_2(\alpha_1) + \alpha_1 F_3(\alpha_2) + \alpha_2 F_4(\alpha_1) =0. \label{eq.F+First}
			\end{align}	
			Now, since $f_j(x) = \mathcal{O}(x^{\nu})$, by the Abelian theorem (cf.\! \cite{Doetsch1974}) we find that for every $j=1,2,3,4$ $F_j(\alpha) = \mathcal{O}(1/\alpha^{\nu +1})$ as $|\alpha| \to \infty$ in $\UHP(0)$. In particular, it implies $F_j(\alpha) \to 0$ as $|\alpha| \to \infty$  in $\UHP(0)$. \\
			
			\textbf{Step 1.} Let $\alpha_2 \equiv 0$ in \eqref{eq.F+First} to obtain
			\begin{align}
				F_1(0) +F_2(\alpha_1) + \alpha_1 F_3(0) =0.
			\end{align}
			Since $F_2(\alpha_1) \to 0$ as $|\alpha_1| \to \infty$, we find $F_1(0) = F_3(0) =0$ and therefore  $F_2(\alpha_1) \equiv 0$. \\
			
			\textbf{Step 2.} Let $\alpha_1 \equiv 0$ in \eqref{eq.F+First} and use the result of the first step to obtain
			\begin{align}
				F_1(\alpha_2)+ \alpha_2F_4(0) =0.
			\end{align}	
			Again, since $F_1(\alpha_2) \to 0$ as $|\alpha_2| \to \infty$ we find $F_4(0) =0$ and therefore $F_1(\alpha_2) \equiv 0$. \\
			
			\textbf{Step 3.} Eq. \eqref{eq.F+First} now becomes
			\begin{align}
				\alpha_1F_3(\alpha_2) + \alpha_2F_4(\alpha_1) =0.
			\end{align}
			Fix $\alpha_1 \equiv \alpha^{\star}_1 \neq 0$. Thus 
			\begin{align}
				F_3(\alpha_2) + \alpha_2 \frac{F_4(\alpha^{\star}_1)}{\alpha^{\star}_1} =0.
			\end{align}
			As before, since $F_3 \to 0$ as $|\alpha_2| \to \infty$, we find $F_4(\alpha^{\star}_1) =0$ and therefore $F_3(\alpha_2) \equiv 0$. Similarly, we find $F_4(\alpha_1) \equiv 0$. By inverse Laplace transform, we find $f_1=f_2=f_3=f_4 \equiv 0$.
			{\hfill{}}
		\end{proof}

		The following is a direct generalisation of \cite{AssierShanin2019} Theorem C.1 (and the techniques used for its proof are almost identical).
		
		\begin{theorem}[1D Uniqueness Theorem]\label{thm.LaplaceLike}
			Let $\gamma_1:[0,\infty) \to \mathbb{C}$ and $\gamma_2: [0,\infty) \to \mathbb{C}$ be piecewise smooth non-(self)intersecting curves lying completely in the sector $\varphi_2<\arg(z)<\varphi_1$, where $\varphi_1-\varphi_2<\pi$, such that $|\gamma_1(t)|, |\gamma_2(t)| \to \infty$ as $t \to \infty$, see Figure \ref{fig:RiemannSphere2} top left. Let $\gamma_1$ (resp. $\gamma_2$) be `finite', in the sense that the length of their segments within the disk $\{ z \in \mathbb{C}| \ 0 \leq |z| \leq r\}$ is finite for all $0 < r <\infty$. Let $f:\gamma_1\cup \gamma_2 \to \mathbb{C}$ satisfy $f(z)=\mathcal{O}(1/|z|^{\beta})$, $\beta>0$, as $|z| \to \infty$ on $\gamma_1$ (resp. $\gamma_2$) and let $f$ be Lipschitz continuous along $\gamma_1 \cup \gamma_2$.
			If there exists a line $L \subset \mathbb{C}$ of constant argument `$\arg(s)$'  such that $-\varphi_2<\arg(s)<\pi-\varphi_1$ (cf.\! Figure \ref{fig:RiemannSphere2} bottom right), and if  
			\begin{align}
				\int_{\gamma_1 \cup \gamma_2} f(z)e^{isz}dz \equiv 0, \ \forall s \in L, \label{eq.AppenedixIntegralUnique}
			\end{align} 
			then $f \equiv 0$ on $\gamma_1 \cup \gamma_2$.
		\end{theorem}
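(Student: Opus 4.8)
The plan is to show that the function
\[
\Psi(s):=\int_{\gamma_1\cup\gamma_2} f(z)\,e^{isz}\,dz
\]
is holomorphic on an open two-dimensional sector of the $s$-plane containing $L$, so that its vanishing on $L$ propagates to the whole sector, and then to deduce $f\equiv 0$ from that by a Cauchy-transform (Sokhotski--Plemelj) argument. This follows the scheme of Appendix C of \cite{AssierShanin2019}; the only new features are that there are two curves $\gamma_1,\gamma_2$ rather than one and that the admissible cone of directions for $L$ is marginally more general, neither of which affects the argument in an essential way.

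\emph{Step 1 (holomorphy and propagation).} On the curves one has $\arg z\in(\varphi_2,\varphi_1)$, hence for $s$ with $-\varphi_2<\arg s<\pi-\varphi_1$ one has $\Re(isz)=-|s|\,|z|\sin(\arg s+\arg z)$ with $\arg s+\arg z\in(0,\pi)$, so that $\Re(isz)\le -c\,|s|\,|z|$ with $c>0$ locally uniform in $s$. Combined with $f(z)=\mathcal O(|z|^{-\beta})$, $\beta>0$, and with the local rectifiability (the ``finiteness'' hypothesis) of $\gamma_1,\gamma_2$, this gives locally uniform convergence of the defining integral, so $\Psi$ is holomorphic on $\Sigma:=\{\,s:\,-\varphi_2<\arg s<\pi-\varphi_1\,\}$ (by Morera's and Fubini's theorems). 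Since $L\subset\Sigma$ and $L$, being a ray, has accumulation points in $\Sigma$, the identity theorem forces $\Psi\equiv 0$ on all of $\Sigma$.

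\emph{Step 2 (reduction to a vanishing Cauchy transform, and conclusion).} Rotate by $\chi:=\tfrac12(\varphi_1+\varphi_2)$: writing $z=e^{i\chi}\zeta$, the rotated curves $\tilde\gamma_j:=e^{-i\chi}\gamma_j$ lie in $\{|\arg\zeta|<\beta_0\}$ with $\beta_0:=\tfrac12(\varphi_1-\varphi_2)<\tfrac\pi2$, i.e. in the open right half-plane; putting $g(\zeta):=e^{i\chi}f(e^{i\chi}\zeta)$ and specialising Step 1 to the ray $s=ite^{-i\chi}\in\Sigma$, $t>0$, gives
\[
\int_{\tilde\gamma_1\cup\tilde\gamma_2} g(\zeta)\,e^{-t\zeta}\,d\zeta=0\qquad\text{for all }t>0 .
\]
For $\Re w<0$ one has $(\zeta-w)^{-1}=\int_0^\infty e^{-t(\zeta-w)}\,dt$ when $\Re\zeta>0$; inserting this, exchanging the order of integration (justified by the decay of $g$ and local rectifiability, after replacing $(\zeta-w)^{-1}$ by $(\zeta-w)^{-N}$ for large $N$ if $\beta\le1$) and using the previous display, one finds that the Cauchy transform
\[
\mathcal C(w):=\int_{\tilde\gamma_1\cup\tilde\gamma_2}\frac{g(\zeta)}{\zeta-w}\,d\zeta
\]
(or its appropriate higher-order variant) vanishes for every $w$ in the open left half-plane. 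Now each $\tilde\gamma_j$ is a simple arc from a finite point to $\infty$, so --- the curves being non-self-intersecting and, as contours of integration, mutually disjoint --- $\tilde\gamma_1\cup\tilde\gamma_2$ does not separate $\mathbb C\cup\{\infty\}$ and $\mathcal C$ is holomorphic on the connected complement, with $\mathcal C(\infty)=0$; the identity theorem then gives $\mathcal C\equiv0$ off the curves. Finally, by the Sokhotski--Plemelj jump relation (legitimate because $f$, hence $g$ along the curves, is Lipschitz and the curves are piecewise smooth), the jump of $\mathcal C$ across $\tilde\gamma_1\cup\tilde\gamma_2$ reproduces the density $g$ up to $2\pi i$ and the unit tangent; since that jump vanishes, $g\equiv0$ on $\tilde\gamma_1\cup\tilde\gamma_2$, i.e. $f\equiv0$ on $\gamma_1\cup\gamma_2$.

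The main obstacle is the rigorous execution of Step 2: justifying the Fubini exchange and the convergence of the Cauchy integral under only the algebraic decay $\mathcal O(|z|^{-\beta})$---which is exactly where the local finiteness of the curves enters, and where one may need to pass to $(\zeta-w)^{-N}$ and integrate by parts, absorbing harmless endpoint contributions---and applying the jump formula for merely piecewise-smooth, unbounded contours (and, should $\gamma_1\cup\gamma_2$ happen to bound a region, treating the bounded complementary components separately). All of this is carried out essentially verbatim as in Appendix C of \cite{AssierShanin2019}.
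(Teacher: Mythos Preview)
Your argument is essentially correct but follows a genuinely different route from the paper's. Both proofs finish the same way---showing that the Cauchy transform of $f$ has no jump across $\gamma_1\cup\gamma_2$ and invoking Plemelj--Sokhotski---but they reach that point by different mechanisms. You first propagate $\Psi\equiv0$ to the full sector $\Sigma$ by the identity theorem, then use the Laplace representation $(\zeta-w)^{-1}=\int_0^\infty e^{-t(\zeta-w)}\,dt$ to conclude that the Cauchy transform $\mathcal C$ vanishes on a half-plane, and finally rely on connectedness of $\mathbb C\setminus(\tilde\gamma_1\cup\tilde\gamma_2)$ to force $\mathcal C\equiv0$ everywhere. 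The paper instead works in the reverse order: it forms the Cauchy transforms $y_1,y_2$ first, takes their Laplace-type transforms $Y_j^{l,r}$ along the two boundary rays $\arg z=\varphi_{1,2}$, uses the hypothesis to glue $Y^l$ and $Y^r$ into a single function $Y$ analytic on a sector of opening \emph{greater} than $\pi$, and then recovers $y_1+y_2$ by an inverse Mellin transform along a contour $\Gamma_c$ that can be reached from both $\Gamma_1$ and $\Gamma_2$; this directly exhibits $y_1+y_2$ as analytic across the whole sector, with no appeal to the topology of the complement.

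What your approach buys is economy: no extension of the curves to the origin, no Mellin inversion. What the paper's approach buys is robustness: it never needs $\mathbb C\setminus(\gamma_1\cup\gamma_2)$ to be connected, so it goes through verbatim even if $\gamma_1$ and $\gamma_2$ intersect one another. The theorem as stated only assumes each curve is non-\emph{self}-intersecting, so your connectedness step is not automatic; your parenthetical remark about ``treating the bounded complementary components separately'' is not quite enough, since $\mathcal C$ could a priori be a nonzero analytic function on such a component. In the paper's intended application the two branch cuts $h_1^-,h_2^-$ are in fact disjoint, so your argument is adequate there, but to match the theorem's generality you would need either to add mutual disjointness as a hypothesis or to supply the extra argument for bounded components.
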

		
		\begin{proof}
			\begin{figure}[h!]
				\centering
				\includegraphics[width=.75\textwidth]{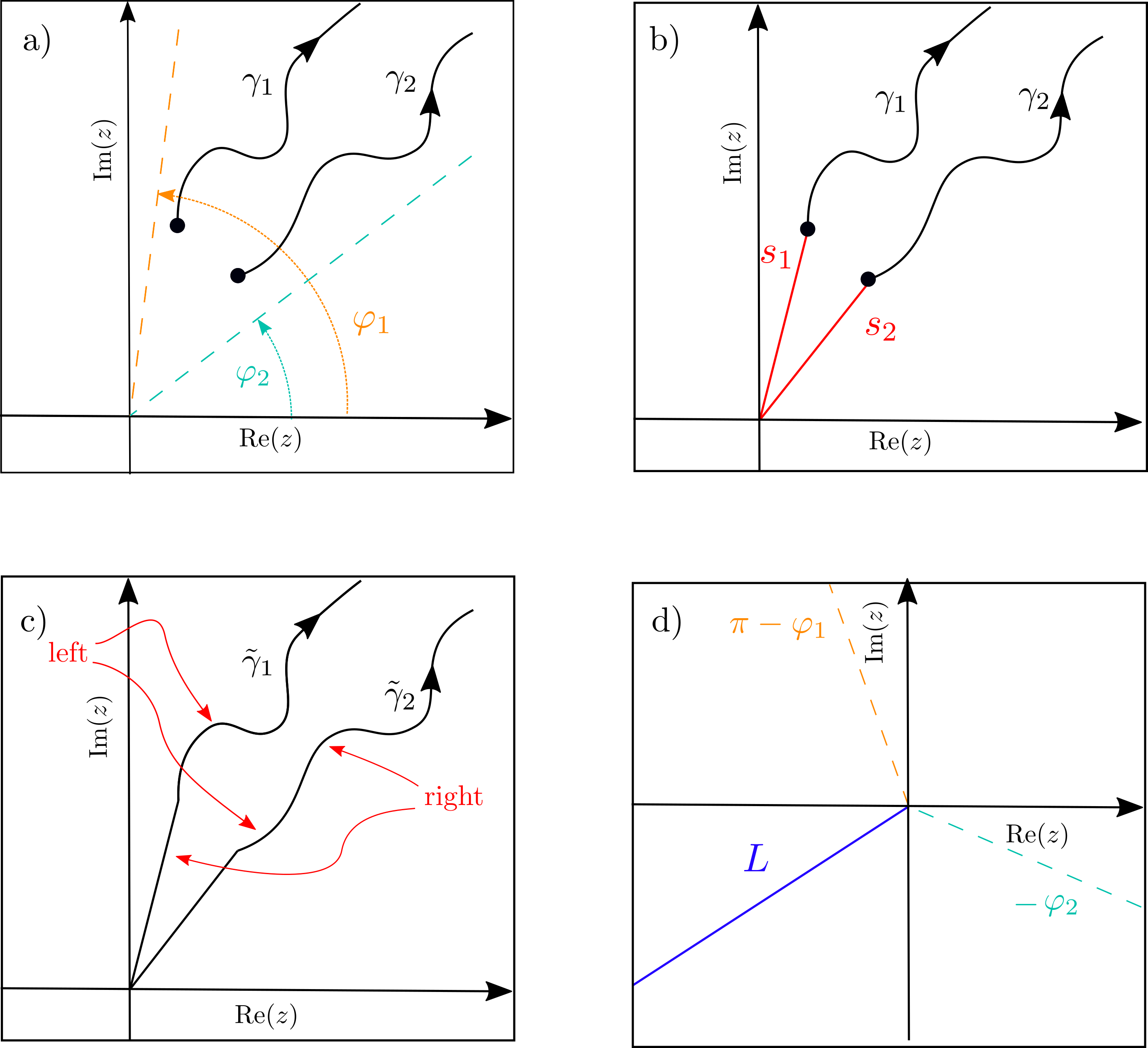}
				\caption{ a) on the top left, shows the curves $\gamma_1$ and $\gamma_2$, and  b), on the top right, shows how the curves are connected with the origin; c) shows how the left and right side of the curves are defined, and d) shows the line $L$ on which \eqref{eq.AppenedixIntegralUnique} is satisfied.} 
				\label{fig:RiemannSphere2}
			\end{figure}
			
			Consider the functions 
			\begin{align*}
				&	y_1(z) = \frac{1}{2 \pi i} \int_{\gamma_1} \frac{f(z')}{z' -z} dz', \\
				&	y_2(z) = \frac{1}{2 \pi i} \int_{\gamma_2} \frac{f(z')}{z' -z} dz'.
			\end{align*}
			We know that $y_1$ (resp. $y_2$) is analytic in $\mathbb{C}\setminus \gamma_1$ (resp. $\mathbb{C} \setminus \gamma_2$) and, due to the Plemelj-Sokhotzki formula (\RED{applicable since $f$ is Lipschitz continuous on $\gamma_1 \cup \gamma_2$}, see \cite{Begehr1994}), 
			\begin{align}
				&	f(\tau) = y_1(\tau^r) - y_1(\tau^l), \ \forall \tau \in \gamma_1, \label{eq.Bla1} \\
				&	f(\tau) = y_2(\tau^r) - y_2(\tau^l), \ \forall \tau \in \gamma_2.	\label{eq.Bla2}
			\end{align}
			Here, for $j=1,2$, $y_j(\tau^r)$ (resp. $y_j(\tau^l)$) refers to the limiting value of $y_j(\tau)$ as $\tau$ approaches $\gamma_j$ from the right (resp. left), as illustrated in Figure \ref{fig:RiemannSphere2}.
			We now show that $y_1$ and $y_2$ are in fact continuous everywhere in $\mathbb{C}$ thus proving the theorem. 
			Continue $\gamma_1$ (resp. $\gamma_2$) towards $0$ via curves $s_1$ (resp. $s_2$) and set $f \equiv 0$ on $s_1$ (resp. $s_2$), cf.\! Figure \ref{fig:RiemannSphere2} top right. As $\gamma_1$ and $\gamma_2$ are completely within the sector $\varphi_2<\arg(z)<\varphi_1$ (and because they have finite length within each finite disk), these curves can be chosen to lie completely within this sector as well. Denote the curve $\gamma_1\cdot s_1$  (resp. $\gamma_2 \cdot s_2$) by $\tilde{\gamma}_1$ (resp. $\tilde{\gamma}_2$).
			
			Then all previously formulated formulae still hold for our new $\tilde{\gamma}_1$ and $\tilde{\gamma}_2$, that is, upon defining 
			\begin{align*}
				&	\tilde{y}_1(z) = \frac{1}{2 \pi i} \int_{\tilde{\gamma}_1} \frac{f(z')}{z' -z} dz', \\
				&	\tilde{y}_2(z) = \frac{1}{2 \pi i} \int_{\tilde{\gamma}_2} \frac{f(z')}{z' -z} dz',
			\end{align*}
			we obtain
			\begin{align}
				&	f(\tau) = \tilde{y}_1(\tau^r) - \tilde{y}_1(\tau^l), \ \forall \tau \in \tilde{\gamma}_1, \\
				&	f(\tau) = \tilde{y}_2(\tau^r) - \tilde{y}_2(\tau^l), \ \forall \tau \in \tilde{\gamma}_2,	
			\end{align}
			since, for $j=1,2$, we have $y_j(\tau) = \tilde{y}_j(\tau)$ for $\tau \in \gamma_j$ and $\tilde{y}_j(\tau)$ is continuous on $s_j$.
			Now, define 
			\begin{align*}
				&	Y^r_1(s) = e^{i\varphi_2}\int_{0}^{\infty} \tilde{y}_1(\tau e^{i\varphi_2}) e^{is \tau e^{i\varphi_2}} d\tau, \\
				&	Y^l_1(s) = e^{i\varphi_1}\int_{0}^{\infty} \tilde{y}_1(\tau e^{i\varphi_1}) e^{is \tau e^{i\varphi_1}} d\tau, \\
				&	Y^r_2(s) =  e^{i\varphi_2}\int_{0}^{\infty} \tilde{y}_2(\tau e^{i\varphi_2}) e^{is \tau e^{i\varphi_2}} d\tau, \\
				&	Y^l_2(s) = e^{i\varphi_1}\int_{0}^{\infty} \tilde{y}_2(\tau e^{i\varphi_1}) e^{is \tau e^{i\varphi_1}} d\tau.
			\end{align*}
			These functions are, due to exponential decay, analytic	in the sectors $-\varphi_1 < \arg(s) < \pi -\varphi_1$ (for $Y^{l}_{1,2}$) and $-\varphi_2 < \arg(s) < \pi -\varphi_2$ (for $Y^{r}_{1,2}$), respectively, and have thus the common domain $-\varphi_2 < \arg(s) < \pi - \varphi_1$. By Cauchy's theorem, we first find:
			\begin{align}
				&	Y^r_1(s) = \int_{\tilde{\gamma}_1} \tilde{y}_1(\tau^r) e^{is \tau} d\tau, \\
				&	Y^l_1(s) = \int_{\tilde{\gamma}_1} \tilde{y}_1(\tau^l) e^{is \tau} d\tau, \\
				&	Y^r_2(s) = \int_{\tilde{\gamma}_2} \tilde{y}_2(\tau^r) e^{is \tau} d\tau, \\
				&	Y^l_2(s) = \int_{\tilde{\gamma}_2} \tilde{y}_2(\tau^l) e^{is \tau} d\tau. 
			\end{align}
			Then, by \eqref{eq.AppenedixIntegralUnique}, we have: 
			\begin{align}
				Y^r_1(s) +Y^r_2(s) - (Y^l_1(s) + Y^l_2(s)) \equiv 0, \ \forall s \in L.
			\end{align}
			Since two analytic functions coinciding on a line coincide on the entirety of their common domain,	we can continue $Y^r = Y^r_1 +Y^r_2$ and $Y^l=Y^l_1+Y^l_2$ to a function $Y$ analytic in the sector $-\varphi_1 < \arg(s) < \pi -\varphi_2$. The remainder of the proof is identical to \cite{AssierShanin2019}. That is, introduce the contours $\Gamma_1$, $\Gamma_2$, and $\Gamma_c$ as shown in Figure \ref{fig:GammaContours1}, 
			\begin{figure}[h!]
				\centering
				\includegraphics[width=.4\textwidth]{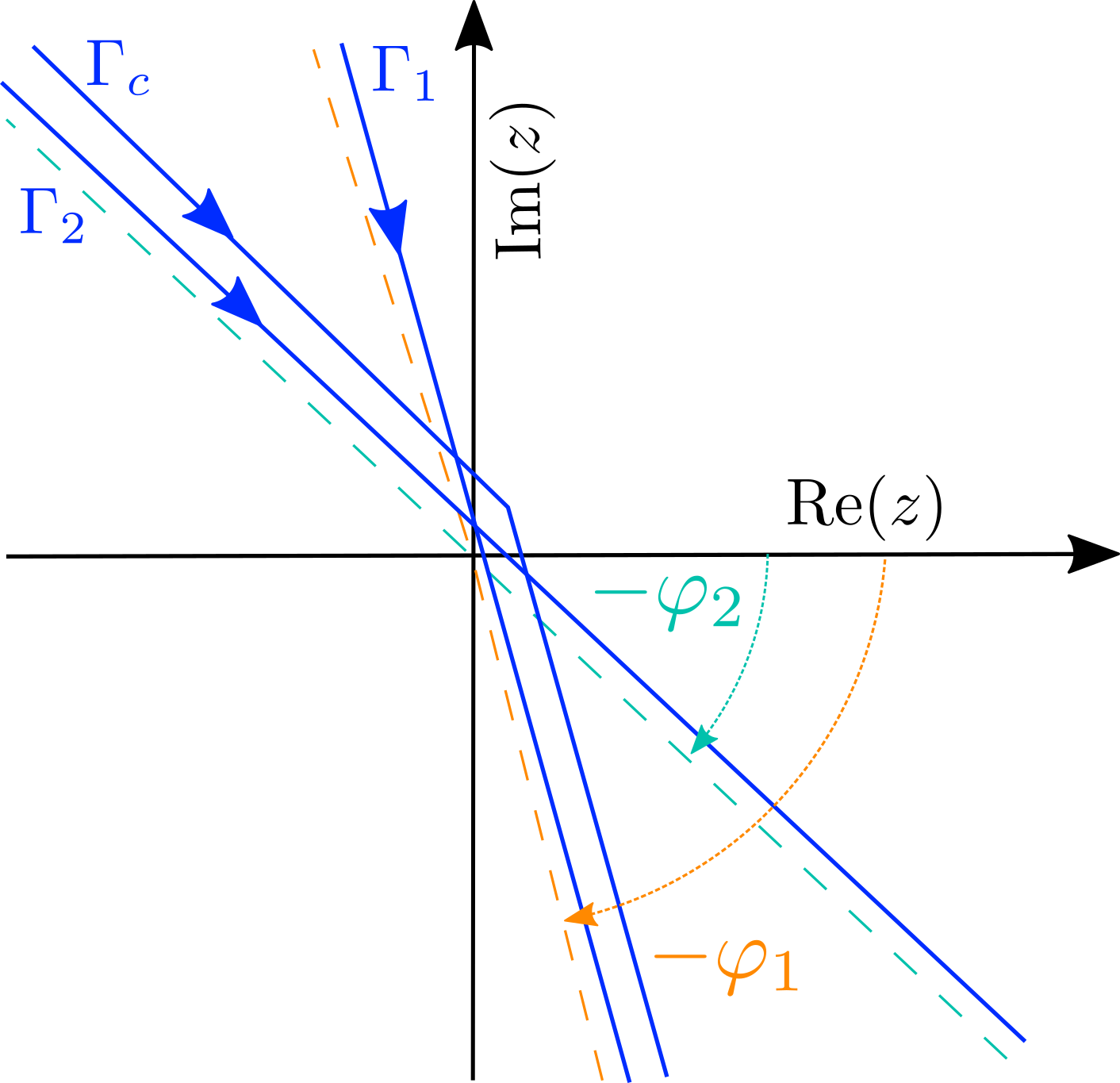}
				\caption{Contours $\Gamma_{1,2,c}$.}
				\label{fig:GammaContours1}		
			\end{figure}
			and obtain $\tilde{y}_1(\tau e^{i\varphi_1}) + \tilde{y}_2(\tau e^{i\varphi_1})$ and $\tilde{y}_1(\tau e^{i\varphi_2}) +\tilde{y}_2(\tau e^{i\varphi_2})$ by inverse Mellin transform:
			\begin{align}
				\tilde{y}_1(\tau e^{i \varphi_{j}}) + \tilde{y}_2(\tau e^{i \varphi_{j}}) = \frac{1}{2 \pi} \int_{\Gamma_{j}} Y(s) e^{is \tau e^{i\varphi_{j}}} ds, \ j=1,2.
			\end{align}
			Due to the just established analyticity properties, Cauchy's theorem and exponential decay, we find 
			\begin{align}
				\tilde{y}_1(\tau e^{i \varphi_{j}}) + \tilde{y}_2(\tau e^{i \varphi_{j}}) = \frac{1}{2 \pi} \int_{\Gamma_{c}} Y(s) e^{is \tau e^{i\varphi_{j}}} ds, \ j=1,2. \label{eq.BLA}
			\end{align}
			As in \cite{AssierShanin2019}, this yields the analytic continuation of $\tilde{y}_1 + \tilde{y}_2$ since the right hand side in \eqref{eq.BLA}  is analytic in the sector $\varphi_2 < \arg(\varphi) <\varphi_1$ showing 
			\begin{align}
				\tilde{y}_1(\tau^r) + \tilde{y}_2(\tau^r) = \tilde{y}_1(\tau^l) + \tilde{y}_2(\tau^l), \ \forall \tau \in \tilde{\gamma}_1 \cup \tilde{\gamma}_2.
			\end{align}
			But since $\tilde{y}_1$ (resp. $\tilde{y}_2$) is analytic on $\tilde{\gamma}_2$ (resp. $\tilde{\gamma}_1$), we find
			\begin{align}
				&	\tilde{y}_1(\tau^r) = \tilde{y}_1(\tau^l), \ \forall \tau \in \tilde{\gamma}_1, \\
				&	\tilde{y}_2(\tau^r) = \tilde{y}_2(\tau^l), \ \forall \tau \in \tilde{\gamma}_2,
			\end{align}	
			giving $f \equiv 0$.	
		{\hfill{}}
		\end{proof}
		
		Then, using Theorem \ref{thm.LaplaceLike}, we find:
		\begin{corollary}[2D Uniqueness Theorem]\label{thm.LaplaceLike2}
			Let $\gamma=\gamma_1 \cup \gamma_2$ and $L = L_j, \ j=1,2$ be as in Theorem \ref{thm.LaplaceLike}. Let $f: \gamma \times \gamma \to \mathbb{C}$ be Lipschitz continuous along $\gamma \times \gamma$ and let $f$ satisfy $f(z_1,z_2)= \mathcal{O}(1/|z_1|^{\beta_1}|z_2|^{\beta_2})$, $\beta_{1,2}>0$, as $|z_1|$ and/or $|z_2| \to \infty$. If 
			\begin{align}
				\int_{\gamma} \int_{\gamma} f(z_1,z_2)e^{i(z_1s_1+z_2s_2)} dz_1 dz_2 =0, \ \forall s_1 \in L_1 \text{ and } \forall s_2 \in L_2,
			\end{align}
			then 
			\begin{align}
				f \equiv 0, \ \text{on} \ \gamma\times \gamma.
			\end{align}			
		\end{corollary}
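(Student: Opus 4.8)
The plan is to reduce the two-dimensional statement to two successive applications of the one-dimensional uniqueness theorem, Theorem \ref{thm.LaplaceLike}, by freezing one variable at a time. Fix $s_2 \in L_2$ and define, for $z_1 \in \gamma$,
\[
g_{s_2}(z_1) = \int_{\gamma} f(z_1,z_2)\, e^{i z_2 s_2}\, dz_2 .
\]
First I would check that this integral is well defined and that $g_{s_2}$ satisfies the hypotheses of Theorem \ref{thm.LaplaceLike}. Convergence is not an issue: since $\gamma_1,\gamma_2$ lie in the open sector $\varphi_2 < \arg(z) < \varphi_1$ and $\arg(s_2)$ satisfies $-\varphi_2 < \arg(s_2) < \pi - \varphi_1$, the quantity $\arg(s_2) + \arg(z_2)$ stays in a compact subinterval of $(0,\pi)$ as $z_2$ ranges over $\gamma$, so $\Re(i s_2 z_2) = -|s_2|\,|z_2|\,\sin(\arg(s_2)+\arg(z_2)) \leq -c(s_2)\,|z_2|$ for some $c(s_2) > 0$. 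Combined with the bound $|f(z_1,z_2)| \leq C\,|z_1|^{-\beta_1}|z_2|^{-\beta_2}$ for large $|z_2|$ and the local boundedness of $f$ near the finite ends of $\gamma$ coming from Lipschitz continuity, the integrand decays exponentially along $\gamma$ and the integral converges absolutely, uniformly in $z_1$. The same estimate shows $g_{s_2}(z_1) = \mathcal{O}(|z_1|^{-\beta_1})$ as $|z_1| \to \infty$ on $\gamma$ (the $z_1$-dependence simply factors out of the majorant), while Lipschitz continuity of $g_{s_2}$ along $\gamma$ follows from that of $f$ in the first variable together with dominated convergence, the dominating function again being the exponentially decaying majorant.

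Next, I would invoke Fubini's theorem — justified by the absolute convergence of the double integral for fixed $s_1 \in L_1$, $s_2 \in L_2$, using the exponential decay in both variables established above — to rewrite the hypothesis as
\[
\int_{\gamma} g_{s_2}(z_1)\, e^{i z_1 s_1}\, dz_1 = 0, \qquad \forall\, s_1 \in L_1 .
\]
Since $g_{s_2}$ meets all the conditions of Theorem \ref{thm.LaplaceLike} (same sector, decay exponent $\beta_1>0$, Lipschitz regularity, and vanishing transform along the admissible line $L_1$), I conclude $g_{s_2} \equiv 0$ on $\gamma$. As $s_2 \in L_2$ was arbitrary, this means that for every fixed $z_1 \in \gamma$ and every $s_2 \in L_2$,
\[
\int_{\gamma} f(z_1,z_2)\, e^{i z_2 s_2}\, dz_2 = 0 .
\]

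Now I would apply Theorem \ref{thm.LaplaceLike} a second time, in the $z_2$-variable, to the function $z_2 \mapsto f(z_1,z_2)$ for each fixed $z_1 \in \gamma$: it is Lipschitz along $\gamma$, satisfies $f(z_1,z_2) = \mathcal{O}(|z_2|^{-\beta_2})$ as $|z_2| \to \infty$ (with a constant depending on the now-fixed $z_1$), and its transform vanishes on $L_2$. Hence $f(z_1,\cdot) \equiv 0$ on $\gamma$, and since this holds for every $z_1 \in \gamma$ we obtain $f \equiv 0$ on $\gamma \times \gamma$, as claimed. I do not expect a genuine obstacle here: the argument is essentially mechanical once the one-dimensional theorem is available. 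The only points demanding care are the technical verifications that the sliced functions $g_{s_2}$ and $f(z_1,\cdot)$ genuinely fall under the hypotheses of Theorem \ref{thm.LaplaceLike} — in particular the uniform-in-$z_1$ exponential decay needed both to estimate and differentiate $g_{s_2}$ and to invoke Fubini — and these follow from the same interplay between the polynomial decay of $f$ and the exponential decay supplied by the kernel $e^{izs}$ on the sector that already underpins the proof of Theorem \ref{thm.LaplaceLike} itself.
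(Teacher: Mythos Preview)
Your proposal is correct and follows exactly the expected route: the paper itself omits the proof, stating only that it is identical to \cite{AssierShanin2019}, and the argument there is precisely this Fubini-plus-two-applications-of-the-1D-theorem reduction you carry out. Your technical checks (absolute convergence from the sector geometry, inherited Lipschitz regularity and decay of $g_{s_2}$, dominated convergence) are all in order.
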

		
		The proof is identical to \cite{AssierShanin2019}, and omitted for brevity. Moreover, these results can be directly generalised to the case of $n \in \mathbb{N}$ curves $\gamma_1 \cup ... \cup \gamma_n$ by following the construction given in Theorem \ref{thm.LaplaceLike}'s proof.
		
		\section{Single integral analytical continuation formulae}\label{Appendix:Residues}

		Here, we show how \eqref{eq.PsiContFormula1} and \eqref{eq.PsiContFormula2} can be simplified, as mentioned in Remark \ref{remark:Leray}. Thereafter, we give analogous simplifications of formulae \eqref{eq.PsiCont} and \eqref{eq.PsiCont2}, thereby simplifying all formulae for analytic continuation derived in the present article. We discuss this for rewriting \eqref{eq.PsiContFormula1} only, as the procedure for rewriting \eqref{eq.PsiContFormula2}, \eqref{eq.PsiCont}, and \eqref{eq.PsiCont2} is analogous. 
		
		For simplifying \eqref{eq.PsiContFormula1}, it is sufficient to focus on the double integral 
		\begin{align*}
			J(\aalpha) &= \int_{-\infty - i \varepsilon}^{\infty - i \varepsilon} \int_{-\infty + i \varepsilon}^{\infty + i \varepsilon} \frac{K(z_1,z_2) \Psi_{++}(z_1,z_2)}{K_{\circ -}(\alpha_1,z_2)(z_2-\alpha_2)(z_1-\alpha_1)} dz_1 dz_2 \\
			& = \int_{-\infty - i \varepsilon}^{\infty - i \varepsilon} \frac{1}{K_{\circ -}(\alpha_1,z_2)(z_2-\alpha_2)} \left(\int_{-\infty + i \varepsilon}^{\infty + i \varepsilon} \frac{K(z_1,z_2) \Psi_{++}(z_1,z_2)}{(z_1-\alpha_1)} dz_1 \right) dz_2, \numberthis \label{eq.First}
		\end{align*}
		which is the integral term in \eqref{eq.PsiContFormula1}. Fix $z_2 = z_2^{\star}$ and focus only on the $z_1$ integral 
		$$\int_{-\infty + i \varepsilon}^{\infty + i \varepsilon} \frac{K(z_1,z^{\star}_2) \Psi_{++}(z_1,z^{\star}_2)}{(z_1-\alpha_1)} dz_1.$$
		Now, since $\Psi_{++}(z_1,z_2)$ is analytic within $\UHP(\RED{-2\varepsilon}) \times \UHP(\RED{-2\varepsilon})$, we know that $\Psi_{++}(z_1,z_2^{\star})$ is analytic for $z_1 \in \UHP$ and therefore, the integrand  $$\frac{K(z_1,z^{\star}_2) \Psi_{++}(z_1,z^{\star}_2)}{(z_1-\alpha_1)}$$
		has only one pole in the $z_1$ upper half plane, given by 
		\begin{align}
			z_1^{\dagger} = \sqrt[\rightarrow]{k_1^2 - (z^{\star}_2)^2}.
		\end{align}
		This is a first order pole of $K(z_1,z_2^{\star})$. Therefore, after a straightforward calculation, the residue theorem yields 
		\begin{align}
			\int_{-\infty + i \varepsilon}^{\infty + i \varepsilon} \frac{K(z_1,z^{\star}_2) \Psi_{++}(z_1,z^{\star}_2)}{(z_1-\alpha_1)} dz_1  =  -2 i \pi \frac{(k_2^2 - k_1^2) \Psi_{++}(z_1^{\dagger}, z_2^{\star})}{(z_1^{\dagger} - \alpha_1) 2 z_1^{\dagger}},
		\end{align}
		and thus \eqref{eq.PsiContFormula1} can be rewritten as
		\begin{align*}
	\Psi_{++}(\aalpha) =& \frac{-i}{4 \pi K_{\circ +}(\aalpha)} \ \int_{-\infty - i \varepsilon}^{ \infty - i \varepsilon} \frac{\left(k^2_2 - k_1^2 \right) \Psi_{++}\left(\sqrt[\rightarrow]{k^2_1 -z^2_2}, z_2\right)}{K_{\circ -}(\alpha_1,z_2)(z_2-\alpha_2) \left(\sqrt[\rightarrow]{k^2_1 -z^2_2} - \alpha_1\right)\sqrt[\rightarrow]{k^2_1 -z^2_2}}  dz_2 \\
	& -  \frac{P_{++}(\aalpha)}{K_{\circ -} (\alpha_1, \mathfrak{a}_2)K_{\circ +}(\aalpha)}.
	\numberthis \label{eq.PsiContFormula11}
	\end{align*}	 
	Similarly, \eqref{eq.PsiContFormula2} can be rewritten as
	\begin{align*}
	\Psi_{++}(\aalpha) = &  \frac{-i}{4 \pi K_{+ \circ}(\aalpha)} \ \int_{-\infty - i \varepsilon}^{ \infty - i \varepsilon} \frac{\left(k^2_2 - k_1^2 \right) \Psi_{++}\left(z_1, \sqrt[\rightarrow]{k^2_1 -z^2_1}\right)}{K_{-\circ }(z_1,\alpha_2)(z_1-\alpha_1)\left(\sqrt[\rightarrow]{k^2_1 -z^2_1} - \alpha_2\right)\sqrt[\rightarrow]{k^2_1 -z^2_1}}  dz_1  \\
	& -  \frac{P_{++}(\aalpha)}{K_{- \circ} (\mathfrak{a}_1, \alpha_2)K_{+\circ}(\aalpha)}.
	\numberthis \label{eq.PsiContFormula22}
	\end{align*}
Again, these formulae are valid for $\aalpha \in \S \times \S$, but can be used for analytical continuation similar to the procedure outlined in Section \ref{sec:AnalyticalContinuation}. Specifically, following the discussion of Section \ref{subsec:FirstStep}, we find that \eqref{eq.PsiContFormula11} yields analyticity of $\Psi_{++}$ within $(H^-\setminus \{\a_1\}) \times \UHP$ whereas \eqref{eq.PsiContFormula22} yields analyticity of $\Psi_{++}$ within $\UHP \times (H^-\setminus \{\a_2\})$.

 Formulae \eqref{eq.PsiCont} and \eqref{eq.PsiCont2} can be rewritten similarly. That is, we may either use the residue theorem in formulae \eqref{eq.PsiCont} and \eqref{eq.PsiCont2}, respectively, or we may change the contour of integration in formulae \eqref{eq.PsiContFormula11} and  \eqref{eq.PsiContFormula22}, respectively, from $(-\infty - i \varepsilon, \infty - i \varepsilon)$ to $P$. After a lengthy but straightforward calculation, this yields 
 	\begin{align*}
 	\Psi_{++}(\aalpha)=& \frac{-i}{4 \pi K_{\circ+}} \int_{P} \frac{\left(k^2_2 - k_1^2 \right) \Psi_{++}\left(\sqrt[\rightarrow]{k^2_1 -z^2_2}, z_2\right)}{K_{\circ -}(\alpha_1,z_2)(z_2-\alpha_2) \left(\sqrt[\rightarrow]{k^2_1 -z^2_2} - \alpha_1\right)\sqrt[\rightarrow]{k^2_1 -z^2_2}} dz_2 \\
 	& - \frac{K_{-\circ}(\alpha_1,\mathfrak{a}_2)}{K_{\circ+} K_{\circ-}(\alpha_1,\mathfrak{a}_2)K_{-\circ}(\mathfrak{a}_1,\mathfrak{a}_2) (\alpha_1 -\mathfrak{a}_1)(\alpha_2-\mathfrak{a}_2)}, \numberthis \\
 	\Psi_{++}(\aalpha)=& \frac{-i}{4 \pi K_{+ \circ}}  \int_{P}  \frac{\left(k^2_2 - k_1^2 \right) \Psi_{++}\left(z_1, \sqrt[\rightarrow]{k^2_1 -z^2_1}\right)}{K_{-\circ }(z_1,\alpha_2)(z_1-\alpha_1)\left(\sqrt[\rightarrow]{k^2_1 -z^2_1} - \alpha_2\right)\sqrt[\rightarrow]{k^2_1 -z^2_1}} dz_1 \\
 	& - \frac{K_{\circ-}(\mathfrak{a}_1,\alpha_2)}{K_{+\circ} K_{-\circ}(\mathfrak{a}_1,\alpha_2)K_{\circ-}(\mathfrak{a}_1,\mathfrak{a}_2) (\alpha_1 -\mathfrak{a}_1)(\alpha_2-\mathfrak{a}_2)}, \numberthis
 	\end{align*} 
 and these formulae can be used for analytic continuation of $\Phi_{3/4}$ within $(H^- \setminus \{a_1\}) \times (H^- \setminus \{a_2\})$.


\bibliographystyle{unsrt}
\bibliography{bibliography}

	\end{document}